\numberwithin{equation}{section}
\begin{document}
\newtheorem{theorem}{Theorem}[section]
\newtheorem{proposition}[theorem]{Proposition}
\newtheorem{remark}[theorem]{Remark}
\newtheorem{corollary}[theorem]{Corollary}
\newtheorem{definition}{Definition}[section]
\newtheorem{lemma}[theorem]{Lemma}

\title{\large Some Quantitative Properties of Solutions to the Buckling Type Equation}

\author{\normalsize Long Tian\\
\scriptsize School of Mathematics and Statistics, Nanjing University of Science $\&$ Technology, Jiangsu, Nanjing 210094, China.\\
\scriptsize Email: tianlong19850812@163.com\\
\normalsize Xiaoping Yang\\
\scriptsize Department of Mathematics, Nanjing University, Jiangsu, Nanjing 210008, China.\\
\scriptsize Email: xpyang@nju.edu.cn\\
}
\date{}
\maketitle
 \fontsize{12}{22}\selectfont\small
 \paragraph{Abstract:} In this paper, we investigate the quantitative unique continuation, propagation of smallness and measure bounds of nodal sets of solutions to the Buckling type equation $\triangle^2u+\lambda\triangle u-k^2u=0$ in a bounded analytic domain $\Omega\subseteq\mathbb{R}^n$ with the homogeneous boundary conditions $u=0$ and $\frac{\partial u}{\partial\nu}=0$ on $\partial\Omega$, where $\lambda,\ k$ are nonnegative real constants, and  $\nu$ is the outer unit normal vector on $\partial\Omega$. We obtain that, the upper bounds for the maximal vanishing order of $u$ and the $n-1$ dimensional Hausdorff measure of the nodal set of $u$ are both $C(\sqrt{\lambda}+\sqrt{k}+1)$, where $C$ is a positive constant only depending on $n$ and $\Omega$. Moreover, we also give a quantitative result of the propagation of smallness of $u$.

\paragraph{Key Words:} Frequency, Doubling index, Nodal sets, Quantitative unique continuation, Buckling type equation, Measure estimates, Propagation of smallness.
\\[10pt]
\emph{MSC}(2020): 58E10, 35J30.

\vspace{1cm}\fontsize{12}{22}\selectfont


\section{Introduction}

In this paper, we will consider the quantitative unique continuation property and upper bounds of the nodal sets of solutions to the Buckling type equation with homogeneous boundary conditions in some bounded analytic domain $\Omega\subseteq\mathbb{R}^n$. Here, a bounded domain $\Omega$ is said to be analytic if there exists a positive constant $\delta$ such that for any $x_0\in\partial\Omega$, $B_{\delta}(x_0)\cap\partial\Omega$ is an $(n-1)-$dimensional analytic hypersurface of $\mathbb{R}^n$.
 The Buckling type equation with homogeneous boundary conditions is as follows:
\begin{equation}\label{basic equations}
\begin{cases}
\triangle^2u+\lambda\triangle u-k^2u=0,\quad in\quad \Omega,\\
u=u_{\nu}=0,\quad on\quad \partial\Omega,
\end{cases}
\end{equation}
where $\nu$ is the unit outer normal vector of $\partial\Omega$, and $u_{\nu}$ is the directional derivative along $\nu$. We also assume that $\lambda,\ k\geq0$. When $k=0$, it is the standard Buckling equation; when $\lambda=0$, it is the eigenvalue problem of the bi-Laplacian operator. The Buckling equation comes from the study of the vibration of beams and buckling of elastic structures and describes the critical buckling load of a clamped plate subjected to a uniform compressive force around its boundary $\cite{Payne}$. 

The present paper focuses on some quantitative properties including measure bounds of nodal sets, the unique continuation, and the smallness propagation of solutions. These properties for partial differential equations are crucial for understanding the growth, uniqueness, distribution of nodal sets and stability of solutions, and have been very important topics involving a large number of intensive studies in the past decades.
One of the famous problems in this aspect is that, for any compact $C^{\infty}$ manifold without boundary, the upper and lower bounds of $(n-1)-$dimensional Huasdorff measure of nodal sets of eigenfunctions of the Laplacian operator both are comparable to $\sqrt{\lambda}$, where $\lambda$ is the corresponding eigenvalue. This problem nowadays is known as $Yau$'s conjecture $\cite{Yau}$.
There are various interesting results in this direction. When the manifold is analytic, the lower bound of this conjecture was proved for surfaces by J. Br\"uning in $\cite{Bruning}$, and  S.-T. Yau, independently $\cite{Logunov2}$. In $1988$, H. Donnelly and C. Fefferman in $\cite{Donnelly and Fefferman1}$ proved the conjecture for any dimensional analytic manifolds. In $1990$ in $\cite{Donnelly and Fefferman2}$, they also obtained that the maximal vanishing order of the eigenfunctions is no more than $C\sqrt{\lambda}$. In $1991$, F. H. Lin in $\cite{Lin}$ proved the monotonicity formula of the frequency function, established the measure upper bounds of nodal sets of solutions to some second-order linear and uniformly elliptic equations, and also derived the upper measure bound of the conjecture for any dimensional analytic manifolds. In 1990, H. Donnelly and C. Fefferman in $\cite{Donnelly and Fefferman2}$ obtained that, for any two dimensional $C^{\infty}$ manifold without boundary, the upper measure bound is $C\lambda^{\frac{3}{4}}$. It was improved by A. Logunov and  E. Malinnikova in $\cite{Logunov3}$ to $C\lambda^{\frac{3}{4}-\epsilon}$ for some positive constant $\epsilon$. In $1989$, R. Hardt and L. Simon studied the high dimensional $C^{\infty}$ case and showed that the upper measure bound is $\lambda^{C\sqrt{\lambda}}$. In $2018$, A. Logunov in $\cite{Logunov1}$ improved the result to $C\lambda^{\alpha}$ for some positive constant $\alpha>\frac{1}{2}$. In $\cite{Kukavica}$, I. Kukavika considered the linear and uniformly elliptic operator $\mathcal{A}$ of $2m-$order  with analytic coefficients and proved that, if the boundary $\partial\Omega$ is analytic, the upper measure bounds of nodal sets of solutions to the equation $\mathcal{A}u=\lambda u$ with analytic homogeneous boundary conditions are less than or equal to $C\lambda^{\frac{1}{2m}}$. In $2000$, Q. Han in $\cite{Han}$ described the structures of the nodal sets of solutions to the linear and uniformly elliptic equations of higher order. In $\cite{Tian and Yang2}$, the authors showed the upper measure bounds of nodal sets of eigenfunctions to the bi-Laplacian operator with non-analytic boundary data. In $\cite{Lin and Zhu}$, F. H. Lin and J. Zhu obtained upper bounds of nodal sets for eigenfunctions of eigenvalue problems including bi-harmonic Steklov eigenvalue problems, buckling eigenvalue problems and champed-plate eigenvalue problems by using analytic estimates of Morrey-Nirenberg and Carleman estimates. There are also various papers discussing the lower measure bounds of nodal sets of eigenfunctions, see for example $\cite{Colding, Logunov2, Sogge and Zelditch}$ and references therein. 

The unique continuation property has been a very active research topic in recent decades. N. Garofalo and F. H. Lin in $\cite{G and L1}$ and $\cite{G and L2}$ proved the monotonicity formula for the frequency functions, the doubling conditions of solutions to linear and uniformly elliptic equations of second order, and obtained the strong unique continuation property. In $1998$, I. Kukavica in $\cite{Kukavica2}$ gave an upper bound for the vanishing order of solutions of some second-order linear and uniformly elliptic equations. J. Zhu in $\cite{J.Y.Zhu3}$ obtained the doubling inequality and the vanishing order of the solutions to the bi-Laplacian equation. In $\cite{J.Y.Zhu2}$, he further gave a bound of the maximal vanishing order of solutions to higher-order elliptic equations with singular lower terms.  G. Alessandrini, L. Rondi, E. Rosset, and S. Vessella in $\cite{2009}$ established the three-spheres inequality and the stability for the Cauchy problem for elliptic equations. A. Logunov and E. Malinnikova in $\cite{Logunov and Milinicova}$ showed the quantitative propagation of smallness for solutions of elliptic equations. For various related results, see $\cite{LogunovMil, BJFA, Davey, Kenig, ZhuAJM}$.

The vanishing order of $u\in C^{\infty}(\Omega)$ at $x_0\in\Omega$ is the nonnegative integer $m$ such that
\begin{equation}\label{definition of vanishing order}
\begin{cases}
D^{\alpha}u(x_0)=0,\quad \forall\ \ |\alpha|<m,\\
D^{\alpha}u(x_0)\neq0,\quad for\ some\ |\alpha|=m,
\end{cases}
\end{equation}
where $\alpha=(\alpha_1,\cdots, \alpha_n)$ is a multi-index, each $\alpha_i$ is a nonnegative integer for any $i=1,2,\cdots,n$, and $D^{\alpha}u=D^{\alpha_1}_{x_1}D^{\alpha_2}_{x_2}\cdots D^{\alpha_n}_{x_n}u$. 
Moreover, if for any positive integer $m$, it holds that
\begin{equation}\label{definition of infinite order}
D^{\alpha}u(x_0)=0,\quad \forall\ \ |\alpha|<m,
\end{equation}
then we say that $u$ vanishes to infinite order at $x_0$.  
The strong unique continuation property means that, if $u$ vanishes to infinite order at some point $x_0$, then $u\equiv0$ in the connected component containing $x_0$. 


The main results of this paper are the following three theorems. 

\begin{theorem}\label{quantitative uniqueness}
Assume that $\Omega$ is a bounded, connected and analytic domain of $\mathbb{R}^n$, and $k,\ \lambda\geq0$ and at least one of them large enough. Then, for a solution $u$ to $(\ref{basic equations})$, there exists a positive constant $C$ depending only on $n$ and $\Omega$, such that the maximal vanishing order of $u$ at any point $x\in\Omega$ is less than or equal to $C(\sqrt{\lambda}+\sqrt{k})$. In other words, if the vanishing order of $u$ at some point $x\in\Omega$ is larger than $C(\sqrt{\lambda}+\sqrt{k})$, then $u$ must be identically zero in $\Omega$.
\end{theorem}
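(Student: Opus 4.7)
The plan is to reduce the fourth-order problem to a coupled second-order elliptic system and then run a carefully tuned Almgren-type frequency argument, tracking the $\lambda$- and $k$-dependence so as to produce the $\sqrt{\lambda}+\sqrt{k}$ scale. First I would factor the operator as $\triangle^2+\lambda\triangle-k^2=(\triangle-\alpha)(\triangle-\beta)$, where $\alpha\geq 0\geq\beta$ are the two real roots of $t^2+\lambda t-k^2=0$; one checks that $|\alpha|,|\beta|\leq \lambda+k$. Setting $v:=\triangle u-\beta u$ rewrites $(\ref{basic equations})$ as the system
\begin{equation*}
\triangle u=\beta u+v,\qquad \triangle v=\alpha v,
\end{equation*}
that is, $\triangle W=AW$ for $W=(u,v)$ with $\|A\|\leq M:=C(\lambda+k)$. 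If $u$ vanishes at $x_0$ to order $m$, then $v=\triangle u-\beta u$ vanishes there to order $\geq m-2$, so a bound on the vanishing order of $W$ yields one on $u$ up to an additive constant.

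Next I would set up an Almgren-type frequency for the system. With
\begin{equation*}
H(r)=\int_{\partial B_r(x_0)}|W|^2, \qquad D(r)=\int_{B_r(x_0)}\bigl(|\nabla W|^2+\langle AW,W\rangle\bigr),
\end{equation*}
define the shifted frequency $N(r)=rD(r)/H(r)+1$ and, following the monotonicity calculation of Garofalo--Lin as adapted to elliptic systems by Kukavica and by J.~Zhu, differentiate $\log H$ and $\log D$ via the Rellich--Pohozaev identity for $\triangle W=AW$. The lower-order contribution $\int_{B_r}\langle AW,W\rangle$ is balanced against $\int_{B_r}|\nabla W|^2$ with the parameter choice $\epsilon\sim\sqrt{M}\,r$, producing monotonicity of $e^{C\sqrt{M}\,r}N(r)$ on an interval $(0,r_0)$ with $r_0$ depending only on $\Omega$. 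Integrating this gives the doubling inequality
\begin{equation*}
\int_{B_{2r}(x_0)}|W|^2\leq 2^{C(N(r_0)+\sqrt{M}\,r_0)}\int_{B_r(x_0)}|W|^2, \qquad 0<r<\tfrac{r_0}{2}.
\end{equation*}

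To close the argument I would bound the initial frequency $N(r_0)$ by a constant depending only on $\Omega$ and $n$, uniformly in $\lambda$ and $k$. This follows from a three-spheres/Cauchy-stability argument that propagates the homogeneous boundary conditions $u=u_\nu=0$ (which in turn give partial control on $v$) to an interior $L^2$ comparison; the analyticity of $\partial\Omega$ enters here through Morrey--Nirenberg type estimates. The doubling index of $W$ at $x_0$ is then at most $C(\sqrt{\lambda}+\sqrt{k})$, and the standard equivalence between doubling index and vanishing order controls the latter by the same quantity, so that infinite vanishing at $x_0$ forces $u\equiv 0$ in $\Omega$ by classical unique continuation for the second-order elliptic system. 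The main obstacle is step two: a naive treatment of the $O(M)$ potential in the Rellich identity yields only an $e^{CMr}$ factor and hence the weaker bound $C(\lambda+k)$; the shifted frequency together with the careful $\epsilon$-balancing above are what extract the correct $\sqrt{M}$ scale. A secondary, but genuinely delicate, difficulty is keeping $N(r_0)$ itself bounded independently of $\lambda$ and $k$, which is what ties the argument to the analytic regularity of $\partial\Omega$.
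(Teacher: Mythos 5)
Your factorization $\triangle^2+\lambda\triangle-k^2=(\triangle-\alpha)(\triangle-\beta)$ with $\beta=\tfrac{-\lambda-\mu}{2}\le 0\le\alpha=\tfrac{-\lambda+\mu}{2}$ and $\mu=\sqrt{\lambda^2+4k^2}$, together with the coupled system for $W=(u,\triangle u-\beta u)$, is exactly the reduction the paper starts from: its $\widetilde v$ is the $(n{+}1)$-dimensional lift of your $v$, since $\tfrac{\lambda+\mu}{2}=-\beta$, and your remark that controlling the vanishing order of $W$ controls that of $u$ is the paper's observation $m\le l+2$. The genuine gap is in step two: the paper does \emph{not} run Almgren's frequency on $W$ in $\mathbb{R}^n$. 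It first lifts to one extra variable, setting $\widetilde u(x,x_{n+1})=u(x)e^{\sqrt{-\beta}\,x_{n+1}}$ and $\widetilde v(x,x_{n+1})=v(x)e^{\sqrt{-\beta}\,x_{n+1}}$, which turns your system into $\triangle\widetilde u=\widetilde v$, $\triangle\widetilde v=\mu\widetilde v$ in $\Omega_R\times\mathbb{R}$, with the $\beta u$ term absorbed into the exponential weight. Without this lift, the symmetric part of your $A$ has a negative eigenvalue $\approx\beta\sim-(\lambda+k)$, so $\langle AW,W\rangle$ carries the unbounded negative piece $\beta u^2$. Once $r\gtrsim(\lambda+k)^{-1/2}$ this term can dominate $\int_{B_r}|\nabla W|^2$, $D(r)$ can become negative, and $N(r)=rD(r)/H(r)$ is not even of a fixed sign. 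A one-dimensional model makes this explicit: for $u''=-Mu$ with $u=\sin(\sqrt M x)$ one computes $rD(r)/H(r)=r\sqrt M\cot(\sqrt M r)$, which oscillates and diverges to $-\infty$ as $r\to\pi/\sqrt M$, so no almost-monotonicity can hold on a fixed interval $(0,r_0)$ once $\sqrt M r_0>\pi$. Your $\epsilon$-balancing does not rescue this: controlling the Rellich error $M\int|W||\nabla W|$ by Young with $\epsilon\sim\sqrt M r$ creates the term $M\epsilon\int|\nabla W|^2\sim M^{3/2}r\int|\nabla W|^2$, which at a fixed $r_0$ exceeds $\int|\nabla W|^2$ by a factor $M^{3/2}r_0$ and cannot be reabsorbed. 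And even granting monotonicity of $e^{C\sqrt M r}N(r)$, integrating $H'/H$ would yield a doubling exponent $\lesssim e^{C\sqrt M r_0}N(r_0)$, exponential rather than linear in $\sqrt M$; the doubling bound you wrote requires the additive statement $N'(r)\ge -C\sqrt M$, which is a different estimate and equally unavailable here.

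Step three also misstates the role of the boundary analyticity. The initial frequency at a fixed scale $r_0$ is \emph{not} bounded by a constant uniform in $\lambda,k$; the paper's Lemma~\ref{upper bound of frequency} proves $N(z_0,r_0)\le C\sqrt\mu$, and that $\sqrt\mu$ is where the final bound actually comes from. It is obtained by combining the lift's exponential factor $e^{\sqrt{-\beta}\,x_{n+1}}$ with the quantitative Morrey--Nirenberg analytic-extension estimate $\|u\|_{L^\infty(\Omega_R)}\le e^{C\Lambda^{1/4}}\|u\|_{L^2(\Omega)}$ of Lemma~\ref{explain theorem}, applied at the interior maximum point of $u$, and then propagated by the changing-center Lemma~\ref{changing center}. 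So the boundary analyticity is not there to make $N(r_0)=O(1)$; it is there to show $N(r_0)\lesssim\sqrt\mu$, after which the clean, $\mu$-independent almost-monotonicity of the \emph{lifted} frequency (Lemma~\ref{monotonicity formula}) carries this bound down to $r\to 0$ and hence to the vanishing order.
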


\begin{theorem}\label{nodal set}
Let $u$ be a solution of $(\ref{basic equations})$, and $\Omega$ be a bounded and analytic domain. Then for $k, \lambda\geq0$, and at least one of them large enough, 
\begin{equation}
\mathcal{H}^{n-1}\left(\left\{x\in\Omega\ \big|\ u(x)=0\right\}\right)\leq C(\sqrt{\lambda}+\sqrt{k}),
\end{equation}
where $C$ is a positive constant depending only on $n$ and $\Omega$, and  $\mathcal{H}^n$ is the $n-$dimensional Hausdorff measure. 
\end{theorem}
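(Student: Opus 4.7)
The plan is to follow the Donnelly--Fefferman/F.~H.~Lin strategy for nodal set upper bounds on real-analytic manifolds, adapted to the fourth-order Buckling type operator. The bridge between Theorem \ref{quantitative uniqueness} and the present statement is a \emph{doubling inequality}
\[
\|u\|_{L^2(B_{2r}(x))} \;\le\; 2^{N}\,\|u\|_{L^2(B_r(x))},
\]
with doubling index $N\le C(\sqrt{\lambda}+\sqrt{k})$ at the natural scale $r\sim(\sqrt{\lambda}+\sqrt{k})^{-1}$; such an estimate is essentially what is produced along the way to proving Theorem \ref{quantitative uniqueness}, via the frequency function machinery advertised in the abstract. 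Once this doubling estimate holds uniformly on balls exhausting $\overline{\Omega}$, the $\mathcal{H}^{n-1}$ bound follows from a standard integral-geometric / complex-analytic argument, so the real work is to secure the doubling index estimate up to the boundary and then convert it to a nodal measure bound.

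For the interior part, I would cover $\Omega_\delta:=\{x\in\Omega:\mathrm{dist}(x,\partial\Omega)>\delta\}$ by a fixed number of balls $B_r(x_i)$ of radius $r\sim(\sqrt{\lambda}+\sqrt{k})^{-1}$ on which the doubling inequality applies. Since the Buckling operator has constant coefficients, $u$ is real analytic, and the doubling estimate lets $u$ be continued to a holomorphic function $\tilde u$ on a complex neighborhood of $B_r(x_i)$, with a quantitative two-sided bound
\[
\sup_{|z-x_i|<2\rho r}|\tilde u(z)|\;\le\;e^{CN}\sup_{|x-x_i|<\rho r/2}|u(x)|.
\]
Slicing $B_{r/2}(x_i)$ by a measure of straight lines and applying Jensen's formula on small complex disks transverse to each line bounds the number of real zeros per slice by $CN$; integrating over the family of lines gives $\mathcal{H}^{n-1}(\{u=0\}\cap B_{r/2}(x_i))\le CNr^{n-1}$. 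Summing over the $\sim r^{-n}|\Omega_\delta|$ balls produces a total interior contribution bounded by $CN\le C(\sqrt{\lambda}+\sqrt{k})$.

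Near the boundary, I would use the analyticity of $\partial\Omega$ to straighten it locally by a real-analytic diffeomorphism, transforming \eqref{basic equations} into a fourth-order elliptic equation with analytic coefficients and the Cauchy data $u=\partial_{x_n}u=0$ on $\{x_n=0\}$. Because both boundary traces vanish, a suitable even/odd reflection (or, equivalently, a boundary-frequency argument of Kukavica / Lin--Zhu type, already cited in the introduction) extends $u$ across the flattened boundary to an analytic solution of a related fourth-order elliptic equation on a full ball, where the interior argument above applies and yields the same bound $C(\sqrt{\lambda}+\sqrt{k})$ in a boundary collar.

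The main obstacle is the boundary/high-order interplay. Unlike the scalar second-order case, the Buckling operator has two independent boundary data, so implementing a reflection or boundary frequency argument that simultaneously preserves ellipticity, preserves analyticity, and keeps the doubling index at $O(\sqrt{\lambda}+\sqrt{k})$ is delicate; one must track how $\lambda$ and $k$ enter the Carleman/frequency estimates and verify that the term $\lambda\triangle u$ enters as a controlled perturbation at the scale $r\sim(\sqrt{\lambda}+\sqrt{k})^{-1}$, so that no cross term of size $\sqrt{\lambda k}$ or $\lambda+k$ appears in the final index. Everything else (complex extension, Jensen, integral geometry) is classical and, once the doubling index bound is in hand uniformly up to $\partial\Omega$, assembles directly into the claimed bound $\mathcal{H}^{n-1}(\{u=0\})\le C(\sqrt{\lambda}+\sqrt{k})$.
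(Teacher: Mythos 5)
Your general strategy (doubling index $\to$ holomorphic extension $\to$ Jensen on lines $\to$ integral geometry) matches the paper's, but there are two genuine gaps.

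\textbf{Scale/covering arithmetic.} You place balls of radius $r\sim(\sqrt{\lambda}+\sqrt{k})^{-1}$ with per-ball doubling index $N\le C(\sqrt{\lambda}+\sqrt{k})$ and nodal contribution $\le CNr^{n-1}$, then sum over $\sim r^{-n}$ balls. This gives a total of order $CN r^{-1}\sim(\sqrt{\lambda}+\sqrt{k})^{2}$, not $C(\sqrt{\lambda}+\sqrt{k})$. (Your phrase ``a fixed number of balls'' of radius $\sim(\sqrt{\lambda}+\sqrt{k})^{-1}$ is also internally inconsistent with ``$\sim r^{-n}|\Omega_\delta|$ balls'': a fixed number of shrinking balls cannot cover a fixed domain.) To make the wavelength-scale covering work one would need the doubling index to be $O(1)$ on wavelength-scale balls --- the genuine content of the Donnelly--Fefferman argument --- which is \emph{not} the same as $N\le C\sqrt{\mu}$ restricted to a small ball. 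The paper instead keeps the balls at a \emph{fixed} radius $R_0$ (independent of $\lambda,k$), uses the uniform frequency bound $N\le C\sqrt{\mu}$ at that fixed scale, gets a per-ball nodal bound $\le C\sqrt{\mu}R_0^{n-1}$, and sums over a $\lambda,k$-independent number of balls. That is what produces the claimed linear-in-$\sqrt{\mu}$ bound.

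\textbf{Boundary treatment.} You propose an even/odd reflection or a Lin--Zhu-type boundary Carleman argument and you yourself flag this as ``delicate'' and unresolved. The paper sidesteps this entirely: by Lemma \ref{boundary estimate1} (analytic boundary regularity up to $\partial\Omega$) and Lemma \ref{explain theorem}, the solution $u$ extends real-analytically to a fixed collar $\Omega_R$ with a controlled bound $\|u\|_{L^\infty(\Omega_R)}\le e^{C\Lambda^{1/4}}\|u\|_{L^2(\Omega)}$, and the equation itself continues to hold in $\Omega_R$ by uniqueness of analytic continuation. Thus the \emph{interior} frequency/doubling/Jensen machinery applies in a neighborhood of $\overline{\Omega}$, and no reflection and no boundary-frequency monotonicity is needed. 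You should replace the proposed reflection step by this analytic-continuation step (which is where the analyticity of $\partial\Omega$ is actually used) to close the argument.

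A smaller remark: the paper's frequency function is not for $u$ on $\Omega$, but for the pair $(\widetilde{u},\widetilde{v})$ on the cylinder $\Omega_R\times\mathbb{R}$, obtained by lifting $u$ in one extra variable to eliminate the $\lambda\triangle u$ term; the doubling estimate for $\|\widetilde u\|_{L^2}^2$ alone (Lemma \ref{new doubling condition}) then requires an extra elliptic estimate to absorb $\widetilde v$. Your sketch treats the doubling inequality as given directly for $u$, which hides some of this structure but is not by itself a gap.
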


\begin{theorem}\label{propagation of smallness}
Let $u$ be a solution of $(\ref{basic equations})$ in a bounded and connected domain $\Omega$. Assume that $G\subset\subset\Omega$ is a connected and open set, and  $E$ is a convex subset of $\Omega$ with $\mathcal{H}^n(E)\geq \epsilon$ for some positive constant $\epsilon$. If
\begin{equation*}
\|u\|_{L^{\infty}(E)}\leq\eta,\quad \|u\|_{L^{\infty}(\Omega)}\leq1,
\end{equation*}
then for $\lambda>0$ and $k>0$, at least one of them large enough, it holds that
\begin{equation}\label{propagation of smallness of G and E}
\|u\|_{L^{\infty}(G)}\leq e^{C({\sqrt{\lambda}+\sqrt{k}})}\eta^{\delta},
\end{equation}
where $C$ and $\delta$ are positive constants depending only on $n$, $diam(\Omega)$, $dist(G,\partial\Omega)$ and $\epsilon$.
\end{theorem}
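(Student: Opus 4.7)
My plan is to reduce the propagation of smallness to a three-ball inequality and then chain it from $E$ to $G$. Concretely, I would first establish that for every $x_0\in\Omega$ and every $r>0$ with $B_{4r}(x_0)\subset\Omega$,
\[
\|u\|_{L^\infty(B_{2r}(x_0))}\ \leq\ e^{C(\sqrt{\lambda}+\sqrt{k})}\,\|u\|_{L^\infty(B_r(x_0))}^{\theta}\,\|u\|_{L^\infty(B_{4r}(x_0))}^{1-\theta},
\]
with some $\theta=\theta(n)\in(0,1)$. The prefactor $e^{C(\sqrt{\lambda}+\sqrt{k})}$ reflects the bound on the frequency/doubling index that already underlies Theorem \ref{quantitative uniqueness}: in standard Almgren-frequency arguments, three-ball inequalities follow from the log-convexity of $\log\|u\|_{L^2(B_r)}$ in $\log r$, and the lower-order terms $\lambda\triangle u-k^2u$ contribute an additive $O(\sqrt{\lambda}+\sqrt{k})$ perturbation of the frequency that exponentiates to the stated prefactor. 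An interior $L^2\to L^\infty$ regularity step for the fourth-order equation then upgrades the natural $L^2$ version of the inequality to the $L^\infty$ form above.

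Given the three-ball inequality, I would exploit the convexity of $E$ to locate a ball inside it. Since $E$ is convex with $\mathcal{H}^n(E)\geq\epsilon$ and $E\subset\Omega$ is bounded, a standard convex-geometry fact (inradius $\geq c_n\,\mathcal{H}^n(E)/\mathrm{diam}(E)^{n-1}$) produces a ball $B_{r_0}(x_E)\subset E$ with $r_0\geq c(n,\mathrm{diam}(\Omega),\epsilon)$, so $\|u\|_{L^\infty(B_{r_0}(x_E))}\leq\eta$. For any $x\in G$, I build a chain of balls $B_\rho(y_0),\dots,B_\rho(y_N)$ with $y_0=x_E$, $y_N=x$, consecutive centers within $\rho$ of one another, and $B_{4\rho}(y_i)\subset\Omega$, where $\rho\simeq\min(r_0,\mathrm{dist}(G,\partial\Omega))$; the length $N$ is controlled by $n$, $\mathrm{diam}(\Omega)$, $\mathrm{dist}(G,\partial\Omega)$ and $\epsilon$ alone. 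Setting $M_i=\|u\|_{L^\infty(B_\rho(y_i))}$ and plugging $M_0\leq\eta$ and $\|u\|_{L^\infty(\Omega)}\leq 1$ into the three-ball inequality recursively yields
\[
M_N\ \leq\ e^{C(\sqrt{\lambda}+\sqrt{k})\sum_{i=0}^{N-1}\theta^i}\,\eta^{\theta^N}\ \leq\ e^{C(\sqrt{\lambda}+\sqrt{k})/(1-\theta)}\,\eta^{\theta^N}.
\]
Covering the relatively compact set $\overline{G}$ by finitely many end-balls of this chain gives (\ref{propagation of smallness of G and E}) with $\delta=\theta^N$ and an enlarged constant $C$.

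The main obstacle I expect is proving the three-ball inequality with the sharp additive dependence $\sqrt{\lambda}+\sqrt{k}$ in the exponent of the prefactor, rather than a cruder $\lambda+k$. Because (\ref{basic equations}) is fourth order, the frequency must either be adapted directly to the Buckling operator (in the spirit of the clamped-plate and buckling frequencies cited in the introduction) or derived after recasting the equation as the coupled second-order system $\triangle u=v$, $\triangle v=-\lambda v+k^2 u$ and running a vector-valued Almgren or Carleman argument. The delicate point in either route is to absorb $\lambda\triangle u-k^2 u$ into the frequency derivative so that its contribution scales as $\sqrt{\lambda}+\sqrt{k}$; once this sharp three-ball inequality is in hand, the chain-of-balls iteration above is routine.
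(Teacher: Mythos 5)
Your proposal is correct and follows essentially the same route as the paper: a frequency/doubling-based three-sphere inequality (Lemma 5.5 in the paper, proved via the monotonicity of $N(z_0,r)$ after reducing the fourth-order equation to the second-order system for $\widetilde{u},\widetilde{v}$ on $\Omega\times\mathbb{R}$), an inradius bound for the convex set $E$ to find a ball of definite size contained in it, and a chain-of-balls iteration (the paper's Lemma 5.7) with the geometric series in $\theta$ producing the $e^{C(\sqrt{\lambda}+\sqrt{k})}\eta^{\delta}$ bound. The only cosmetic difference is that the paper runs the iteration at the $L^2$ level for the pair $(\widetilde{u},\widetilde{v})$ and converts to $L^\infty$ of $u$ at the end, whereas you phrase the three-ball inequality directly in $L^\infty$ after an interior elliptic upgrade; both versions give the same result.
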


In order to show the above results, we first explicitly establish a series of elliptic estimates involving $\lambda$ and $k$. With the help of introducing the frequency and doubling index related to solutions to the buckling type equation, we control the vanishing order and upper measure bounds of nodal sets of solutions by the frequency after deriving its monotonicity, doubling estimates, and mutually controlled relationship between it and the doubling index. We further show the measure upper bounds by the standard complexification. Finally, we establish the three sphere inequality and prove the quantitative propagation of smallness by iteration arguments. We point out that it is important for us to analytically extend the solutions considered to some neighborhood of $\Omega$ because of the analyticity of the solutions and $\partial\Omega$ in this paper. 

The rest of this paper is organized as follows. In the second section, we give the $L^2$ and $L^{\infty}$ estimates for every order derivative of $u$ in $\Omega$ explicitly involving $\lambda$ and $k$, and analytically extend $u$ across the boundary $\partial\Omega$. In the third section, we introduce the frequency and doubling index, and show an upper bound for the vanishing order and the quantitative unique continuation of $u$, i.e., proving Theorem $\ref{quantitative uniqueness}$. 
In the fourth section, we prove Theorem $\ref{nodal set}$ to give an upper measure bound for the nodal set of $u$ in $\Omega$.
Finally, in the fifth section, we prove Theorem $\ref{propagation of smallness}$, and show the propagation of smallness of $u$. In the rest of this paper, $C$ and $C'$ in different lines may be different positive constants depending only on $n$ and $\Omega$.

\section{A priori estimates for any order derivatives of $u$}

This section will give the estimates of any order derivatives of a solution $u$ to $(\ref{basic equations})$. We first recall the following lemma which comes from $\cite{Lions and magenes}$. 

\begin{lemma}\label{basic theorem 3}
Let  $u\in\mathcal{D}(B_r^+(0)):=\cap_{m=0}^{\infty}W^{m,2}(B_r^+(0))$ and $D^{l}_nu=\frac{\partial^lu}{\partial x_n^l}$, where $W^{m,2}$ is the standard Sobolev space, $B_{r}^+(0)=\{ x\ |\ |x|<r,\ x_n>0\}$ is the upper half ball with radius $r$ centered at the origin. Then for any $0<\rho\leq r$, and any $\epsilon>0$, there exists a positive constant $C$ depending on $\epsilon$, $n$ and $r$, such that
\begin{equation}
\sum\limits_{t=1}^3\sum\limits_{|\alpha|=t,\alpha_n=0}\|D^{4-t}_nD^{\alpha}u\|_{L^2(B_{\rho}^+(0))}\leq \epsilon\|D_n^4u\|_{L^2(B_{\rho}^+(0))}+C\sum\limits_{|\alpha|=4,\alpha_n=0}\|D^{\alpha}u\|_{L^2(B_{\rho}^+(0))}.
\end{equation}
\end{lemma}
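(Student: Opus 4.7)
The plan is to prove this directional interpolation inequality by combining a Fourier-side Young's inequality with a cutoff-and-extension argument. The heart of the estimate is the pointwise weighted Young inequality
\[
|\xi'|^{2t}|\xi_n|^{2(4-t)}\leq \epsilon\,|\xi_n|^{8}+C(\epsilon)\,|\xi'|^{8},\qquad 1\leq t\leq 3,
\]
which follows by writing the left side as $(|\xi'|^{8})^{t/4}(|\xi_n|^{8})^{(4-t)/4}$ and applying Young's inequality with conjugate exponents $p=4/(4-t)$ and $q=4/t$. Combined with the elementary bounds $|\xi^{\alpha}|^{2}\leq |\xi'|^{2t}$ for $|\alpha|=t,\alpha_n=0$ and $|\xi'|^{8}\leq C_n\sum_{|\alpha'|=4,\alpha'_n=0}|\xi^{\alpha'}|^{2}$, Plancherel's theorem together with a standard density argument produces the whole-space analogue of the desired inequality for every $v\in W^{4,2}(\mathbb{R}^n)$.

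To pass to the half-ball $B_\rho^+(0)$ I would pick a cutoff $\chi\in C_c^\infty(B_r)$ with $\chi\equiv 1$ on $\overline{B_\rho}$ and a higher-order (Lions--Magenes style) reflection extension $Eu$ of $u$ across $\{x_n=0\}$, so that $v:=\chi\,Eu\in W^{4,2}(\mathbb{R}^n)$ coincides with $u$ on $B_\rho^+$. Applying the $\mathbb{R}^n$ estimate to $v$ and expanding the various fourth-order derivatives by the Leibniz rule isolates the target term $\|D_n^{4-t}D^{\alpha}u\|_{L^2(B_\rho^+)}$ on the left, together with the two principal contributions on the right, plus Leibniz remainders in which at least one derivative falls on $\chi$ and which therefore involve only derivatives of $u$ of order strictly less than four, supported in $B_r^+$.

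These lower-order remainders, together with any high-order norms picked up on $B_r^+\setminus B_\rho^+$, are absorbed by iterating the scalar Ehrling-type interpolation
\[
\|D^j u\|_{L^2(B_r^+)}\leq \epsilon'\|D^4 u\|_{L^2(B_r^+)}+C(\epsilon',r)\|u\|_{L^2(B_r^+)},\quad 0\leq j<4,
\]
and then feeding $\|D^4 u\|_{L^2(B_r^+)}$ back into a second application of the $\mathbb{R}^n$ inequality, so that at each step the surviving high-order contribution is either a small multiple of $\|D_n^4 u\|_{L^2(B_\rho^+)}$ or a large multiple of the pure tangential fourth-order norms on $B_\rho^+$. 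The main obstacle is the domain bookkeeping: one has to verify that after the Leibniz expansion and iterated Ehrling absorption, every term that survives on the right is genuinely of the announced form $\epsilon\|D_n^4 u\|_{L^2(B_\rho^+)}$ or $C\|D^{\alpha'}u\|_{L^2(B_\rho^+)}$ with $|\alpha'|=4,\alpha'_n=0$. The hypothesis $u\in\cap_m W^{m,2}(B_r^+)$ is used precisely to guarantee that each intermediate norm appearing in the iteration is finite, so the absorption is rigorous; this is in essence the argument of Lions and Magenes.
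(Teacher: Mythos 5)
The paper does not actually prove this lemma; it is cited from Lions and Magenes with no argument given, so there is no in-paper proof to compare your sketch against. Judged on its own merits, your proposal cannot succeed, because the step you leave vague is exactly where the estimate as written in the paper breaks down, and in fact the inequality is false as stated. Take $u(x)=x_n^3x_1$, a polynomial lying in $\cap_m W^{m,2}(B_r^+(0))$. For $t=1$ and $\alpha=e_1$ one has $D_n^3D_1u=6$, so the left-hand side of the claimed inequality is at least $6|B_\rho^+|^{1/2}>0$. Meanwhile $D_n^4u=0$, and for every multi-index $\alpha$ with $|\alpha|=4$ and $\alpha_n=0$ one has $D^\alpha u=x_n^3D^\alpha(x_1)=0$, since such an $\alpha$ has length at least two while $x_1$ is linear. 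Thus the right-hand side vanishes for every choice of $\epsilon$ and $C$. Whatever the original Lions--Magenes statement is, it must carry an additional lower-order term such as $C\|u\|_{L^2}$ that the paper has dropped.

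Your sketch would, at best, prove exactly that weaker version, and the gap sits in the final absorption paragraph. After cutting off with $\chi\in C_c^\infty(B_r)$, $\chi\equiv1$ on $\overline{B_\rho}$ (a cutoff that does not even exist when $\rho=r$), and expanding by the Leibniz rule, the surviving fourth-order norms on the right are over $B_r^+$, not $B_\rho^+$, and nothing in your scheme shrinks them back to the smaller half-ball. More fundamentally, the Ehrling interpolation $\|D^ju\|_{L^2(B_r^+)}\leq\epsilon'\|D^4u\|_{L^2(B_r^+)}+C(\epsilon',r)\|u\|_{L^2(B_r^+)}$ always leaves behind a $\|u\|_{L^2(B_r^+)}$ term, and no amount of iterating it and feeding $\|D^4u\|$ back into the $\mathbb{R}^n$ inequality removes that term: you only trade higher-order norms for the $L^2$ norm, never eliminate the $L^2$ norm. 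That surviving $\|u\|_{L^2}$ term is not an ignorable nuisance; it is precisely the quantity the counterexample above shows to be indispensable. Consequently your claim that ``at each step the surviving high-order contribution is either a small multiple of $\|D_n^4u\|_{L^2(B_\rho^+)}$ or a large multiple of the pure tangential fourth-order norms on $B_\rho^+$'' is not attainable, and the domain bookkeeping you flag as the main obstacle is in fact a genuine obstruction rather than a matter of care.
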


Next, we define 
\begin{equation*}
\bar{u}(x,x_{n+1})=u(x)e^{\sqrt{\frac{\lambda}{2}}x_{n+1}}.
\end{equation*}
Then $\bar{u}$ satisfies the following equation:
\begin{equation}\label{rewrite equation}
\triangle^2\bar{u}=\Lambda\bar{u}\quad in\quad\Omega\times\mathbb{R},
\end{equation}
with the boundary conditions below:
\begin{equation}\label{rewrite boundary conditions}
\bar{u}=0,\quad\bar{u}_{\nu}=0\quad on\quad\partial\Omega\times\mathbb{R}.
\end{equation}
Here $\Lambda=\frac{\lambda^2}{4}+k^2$.
In the following, we always assume that $\Lambda>0$ is large enough.

\begin{remark}\label{analytic remark}
From the standard elliptic theory ($\cite{Lions and magenes}$, Chapter $8$),  the solutions to the problems $(\ref{basic equations})$ and $(\ref{rewrite equation})$ belong to $W^{m,2}$ for any positive integer $m$, and are analytic in $\Omega$.
\end{remark}

\begin{lemma}\label{interior estimate}
Let $\bar{u}$ satisfy the equation $(\ref{rewrite equation})$. Then for any $z_0=(x_0,0)$ with $x_0\in\Omega$ and $B_r(x_0)\subseteq\Omega$, any multi-index $\alpha$,
\begin{equation}\label{W42 estimate}
\|D^{\alpha}\bar{u}\|_{W^{4,2}(B_{\eta r}(z_0))}\leq C\left(\Lambda+\frac{1}{(1-\eta)^4r^4}\right)\|D^{\alpha}\bar{u}\|_{L^2(B_{r}(z_0))},
\end{equation}
for any $\eta\in (0,1)$. Here $B_{r}(z_0)\subseteq\Omega\times\mathbb{R}$ is the ball in $\mathbb{R}^{n+1}$ centered at $z_0$ with  its radius $r$, and $C$ is a positive constant depending only on $n$.
\end{lemma}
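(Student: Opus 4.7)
The plan is to carry out a standard interior Caccioppoli-type regularity argument for the constant-coefficient biharmonic-type equation $\triangle^2\bar{u}=\Lambda\bar{u}$. The first key observation is that since the operator has constant coefficients, any derivative $v:=D^{\alpha}\bar{u}$ again satisfies $\triangle^2 v=\Lambda v$ on the same domain. Consequently it suffices to establish the basic estimate
$$\|w\|_{W^{4,2}(B_{\eta r}(z_0))}\leq C\left(\Lambda+\frac{1}{((1-\eta)r)^4}\right)\|w\|_{L^2(B_r(z_0))}$$
for any smooth solution $w$ of $\triangle^2 w=\Lambda w$ in $B_r(z_0)\subset\mathbb{R}^{n+1}$, and then apply it with $w=D^{\alpha}\bar{u}$ for each multi-index $\alpha$.

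Next I would introduce a smooth cutoff $\phi\in C^{\infty}_c(B_r(z_0))$ with $\phi\equiv 1$ on $B_{\eta r}(z_0)$, $0\leq\phi\leq 1$, and $|D^j\phi|\leq C/((1-\eta)r)^j$ for $0\leq j\leq 4$. Testing the equation against $\phi^{8}w$ and integrating by parts four times, the principal term $\int\phi^{8}(\triangle w)^2$ emerges on the left, while lower-order terms each carry at least one derivative of $\phi$ and can be absorbed into the principal term via repeated Cauchy-Schwarz and Young's inequalities; the forcing term contributes $\Lambda\int\phi^{8}w^2$. This yields the Caccioppoli-type bound
$$\int_{B_{\eta r}(z_0)}(\triangle w)^2\leq C\left(\Lambda+\frac{1}{((1-\eta)r)^4}\right)\int_{B_r(z_0)}w^2.$$
To upgrade from control of $\triangle w$ to full $W^{4,2}$ control, I would introduce a short chain of nested balls between $B_{\eta r}(z_0)$ and $B_r(z_0)$ and invoke the standard $L^2$ elliptic regularity $\|D^2 w\|_{L^2(B')}\leq C(\|\triangle w\|_{L^2(B'')}+\text{lower order})$ for the Laplacian on concentric balls. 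Iterating once more on the second derivatives of $w$ and using the equation $\triangle^2 w=\Lambda w$ itself to rewrite $\triangle(\triangle w)=\Lambda w$, I recover $L^2$ estimates on all fourth-order derivatives. The factor $\Lambda+1/((1-\eta)r)^4$ persists as a common envelope through these steps.

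The main technical obstacle is keeping the explicit dependence on $(1-\eta)r$ clean throughout the repeated integrations by parts and the bootstrap to fourth-order derivatives, so that no intermediate step generates a worse negative power than $1/((1-\eta)r)^4$. This is handled by fixing a finite number of nested balls (independent of $n$) and by always absorbing cutoff-derivative terms into the principal energy term via Young's inequality, so that only the clean final power survives. The resulting constant $C$ depends solely on the dimension $n$ through the Sobolev and interpolation constants used in absorbing the lower-order terms, in agreement with the statement.
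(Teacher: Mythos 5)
Your proof is correct and reaches the same conclusion, but by a genuinely different route from the paper. Both proofs begin with the same observation that $w=D^{\alpha}\bar u$ again satisfies $\triangle^2 w=\Lambda w$ by constancy of coefficients, so it suffices to treat $\alpha=0$. From there the paper works \emph{top-down}: it multiplies the equation by $\bar u_{mmll}\psi$ with $\psi=\phi^4$ and integrates by parts to obtain $\int\phi^4\sum|\bar u_{ijml}|^2$ as the principal term, then separately bounds the third-derivative residual by second derivatives plus $\Lambda^2\bar u^2$, then bounds $\int|D^2\bar u|^2$ by $\bar u^2$ using yet another cutoff of the form $e^{1-\bar\phi^{-1}}$, chaining everything together at the end. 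You instead work \emph{bottom-up}: test the equation against $\phi^8 w$ to obtain a Caccioppoli bound on $\int(\triangle w)^2$, then upgrade to $W^{4,2}$ by the standard interior $L^2$ regularity theory for $\triangle$, applied twice on nested annuli — once to $\triangle w$ with source $\triangle(\triangle w)=\Lambda w$ to control $D^2(\triangle w)$, and once to $D^2 w$ with source $D^2(\triangle w)$ to control $D^4 w$ — which accumulates the required $r^{-4}$ and $\Lambda$ factors. Your argument is more modular and leans on off-the-shelf Laplacian estimates, while the paper's is more elaborate but self-contained (it never cites an external elliptic estimate). Both yield the same final constant dependence $C(\Lambda + ((1-\eta)r)^{-4})$ with $C=C(n)$. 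One small point to keep straight when you write it out fully: after the Caccioppoli step you control $\triangle w$, but to run the second elliptic-regularity step on $D^2 w$ you also need $\|D^2 w\|_{L^2}$ already under control; that comes from a first application of interior regularity for $\triangle$ directly on $w$ (with source $\triangle w$), so your chain really has three applications rather than two, each costing an extra $r^{-2}$ — but since you only need $r^{-4}$ overall and the intermediate estimate on $\|D^2 w\|$ contributes only through the $r^{-2}\|D^2 w\|$ term of the last step, the exponents work out as you claim.
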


\begin{proof}

Since $\bar{u}$ is real analytic by Remark $\ref{analytic remark}$, $\bar{u}_{ijml}=:D_{x_i}D_{x_j}D_{x_m}D_{x_l}u$ makes sense for any $i,j,m,l\in\{1,2,\cdots,n+1\}$. We multiply both sides of the equation $(\ref{rewrite equation})$ by $\bar{u}_{mmll}\psi$, and take integral over $\Omega\times\mathbb{R}$, here $\psi=\phi^4$, $\phi\in C^{\infty}(B_r(z_0))$, and
\begin{equation}
\begin{cases}
\phi(x)=1\quad  in \quad B_{\eta r}(z_0),\\
\phi(x)=0\quad outside \quad B_{\frac{1+\eta}{2}r}(z_0),\\
|D\phi(x)|\leq\frac{C}{(1-\eta)r},
\end{cases}
\end{equation}
 for some positive constant $C$ depending only on $n$. Then by integrating by parts, summing over $m,l$ from $1$ to $n+1$, we have for any $\epsilon>0$,
\begin{eqnarray*}
&&\Lambda\sum\limits_{m,l=1}^{n+1}\int_{B_{\frac{1+\eta}{2}r}(z_0)}\bar{u}\bar{u}_{mmll}\psi dz=\sum\limits_{i,j,m,l=1}^{n+1}\int_{B_{\frac{1+\eta}{2} r}(z_0)}\bar{u}_{iijj}\bar{u}_{mmll}\psi dz
\\&=&\sum\limits_{i,j,m,l=1}^{n+1}\int_{B_{\frac{1+\eta}{2} r}(z_0)}\bar{u}_{ijml}^2\psi dz-\sum\limits_{i,j,m,l=1}^{n+1}\int_{B_{\frac{1+\eta}{2}r}(z_0)}\bar{u}_{ijj}\bar{u}_{mmll}\psi_idz+\sum\limits_{i,j,m,l=1}^{n+1}\int_{B_{\frac{1+\eta}{2}r}(z_0)}\bar{u}_{ijj}\bar{u}_{mlli}\psi_mdz
\\&-&\sum\limits_{i,j,m,l=1}^{n+1}\int_{B_{\frac{1+\eta}{2}r}(z_0)}\bar{u}_{imj}\bar{u}_{mlli}\psi_jdz+\sum\limits_{i,j,m,l=1}^{n+1}\int_{B_{\frac{1+\eta}{2}r}(z_0)}\bar{u}_{imj}\bar{u}_{milj}\psi_ldz
\\&\geq&(1-\epsilon)\sum\limits_{i,j,m,l=1}^{n+1}\int_{B_{\frac{1+\eta}{2} r}(z_0)}\bar{u}_{ijml}^2\psi dz-\frac{C}{\epsilon}\sum\limits_{i,j,m=1}^{n+1}\int_{B_{\frac{1+\eta}{2} r}(z_0)}\bar{u}_{ijm}^2\frac{|D\psi|^2}{\psi} dz.
\end{eqnarray*}
In the last inequality, we have used the H\"older's inequality and Young inequality.
On the other hand, 
\begin{eqnarray*}
\Lambda\sum\limits_{m,l=1}^{n+1}\int_{B_{\frac{1+\eta}{2}r}(z_0)}\bar{u}\bar{u}_{mmll}\psi dz
\leq \frac{\Lambda^2}{\epsilon}\int_{B_{\frac{1+\eta}{2}r}(z_0)}\bar{u}^2\psi dz+\epsilon\sum\limits_{m,l=1}^{n+1}\int_{B_{\frac{1+\eta}{2}r}(z_0)}\bar{u}^2_{mmll}\psi dz.
\end{eqnarray*}
So
\begin{equation}\label{from 4 to 3}
\sum\limits_{i,j,m,l=1}^{n+1}\int_{B_r(z_0)}|\bar{u}_{ijml}|^2\phi^4 dz\leq\frac{C}{(1-\eta)^2r^2\epsilon}\sum\limits_{i,j,m=1}^{n+1}\int_{B_r(z_0)}|\bar{u}_{ijm}|^2\phi^2dz+\frac{C\Lambda^2}{\epsilon}\int_{B_r(z_0)}|\bar{u}|^2\phi^4dz,
\end{equation}
 So by choosing $\epsilon=\frac{1}{2}$, we have
\begin{equation}\label{four order estimate}
\sum\limits_{i,j,m,l=1}^{n+1}\int_{B_{\frac{1+\eta}{2}r}(z_0)}|\bar{u}_{ijml}|^2\phi^4dz\leq \frac{C}{(1-\eta)^2r^2}\sum\limits_{i,j,m=1}^{n+1}\int_{B_{\frac{1+\eta}{2}r}(z_0)}|\bar{u}_{ijm}|^2\phi^2dz+C\Lambda^2\int_{B_{\frac{1+\eta}{2}r}(z_0)}|\bar{u}|^2\phi^4dz.
\end{equation}

Now we consider the first term on the right hand side of $(\ref{four order estimate})$. In fact, by the direct calculation, integrating by parts, and the equation $(\ref{rewrite equation})$, we have
\begin{eqnarray*}
I_1&=:&\sum\limits_{i,j,m=1}^{n+1}\int_{B_{\frac{1+\eta}{2}r}(z_0)}|\bar{u}_{ijm}|^2\phi^2dz=-\sum\limits_{i,j,m=1}^{n+1}\int_{B_{\frac{1+\eta}{2}r}(z_0)}\bar{u}_{ij}\bar{u}_{ijmm}\phi^2dz-2\sum\limits_{i,j,m=1}^{n+1}\int_{B_{\frac{1+\eta}{2}r}(z_0)}\bar{u}_{ij}\bar{u}_{ijm}\phi\phi_mdz
\\&=&\sum\limits_{i,j,m=1}^{n+1}\int_{B_{\frac{1+\eta}{2}r}(z_0)}\bar{u}_{ijj}\bar{u}_{imm}\phi^2dz+2\sum\limits_{i,j,m=1}^{n+1}\int_{B_{\frac{1+\eta}{2}r}(z_0)}\bar{u}_{ij}\bar{u}_{imm}\phi\phi_jdz-2\sum\limits_{i,j,m=1}^{n+1}\int_{B_{\frac{1+\eta}{2}r}(z_0)}\bar{u}_{ij}\bar{u}_{ijm}\phi\phi_mdz
\\&=&-\sum\limits_{i,j,m=1}^{n+1}\int_{B_{\frac{1+\eta}{2}r}(z_0)}\bar{u}_{jj}\bar{u}_{iimm}\phi^2dz-2\sum\limits_{i,j,m=1}^{n+1}\int_{B_{\frac{1+\eta}{2}r}(z_0)}\bar{u}_{jj}\bar{u}_{imm}\phi\phi_i dz\\&+&2\sum\limits_{i,j,m=1}^{n+1}\int_{B_{\frac{1+\eta}{2}r}(z_0)}\bar{u}_{ij}\bar{u}_{imm}\phi\phi_jdz-2\sum\limits_{i,j,m=1}^{n+1}\int_{B_{\frac{1+\eta}{2}r}(z_0)}\bar{u}_{ij}\bar{u}_{ijm}\phi\phi_mdz
\\&=&-\Lambda\int_{B_{\frac{1+\eta}{2}r}(z_0)}\triangle\bar{u}\bar{u}\phi^2dz-2\sum\limits_{i,j,m=1}^{n+1}\int_{B_{\frac{1+\eta}{2}r}(z_0)}\bar{u}_{jj}\bar{u}_{imm}\phi\phi_i dz\\&+&2\sum\limits_{i,j,m=1}^{n+1}\int_{B_{\frac{1+\eta}{2}r}(z_0)}\bar{u}_{ij}\bar{u}_{imm}\phi\phi_jdz-2\sum\limits_{i,j,m=1}^{n+1}\int_{B_{\frac{1+\eta}{2}r}(z_0)}\bar{u}_{ij}\bar{u}_{ijm}\phi\phi_mdz
\\&\leq&\frac{1}{2}\int_{B_{\frac{1+\eta}{2}r}(z_0)}(\triangle\bar{u})^2\phi^2dz+\frac{\Lambda^2}{2}\int_{B_{\frac{1+\eta}{2}r}(z_0)}\bar{u}^2\phi^2dz
\\&+&\frac{3}{\epsilon_1}\int_{B_{\frac{1+\eta}{2}r}(z_0)}(\triangle\bar{u})^2|D\phi|^2dz+\frac{\epsilon_1}{3}\int_{B_{\frac{1+\eta}{2}r}(z_0)}|D\triangle\bar{u}|^2\phi^2dz
\\&+&\frac{3}{\epsilon_1}\sum\limits_{i,j=1}^{n+1}\int_{B_{\frac{1+\eta}{2}r}(z_0)}(\bar{u}_{ij})^2|D\phi|^2dz+\frac{\epsilon_1}{3}\int_{B_{\frac{1+\eta}{2}r}(z_0)}|D\triangle\bar{u}|^2\phi^2dz
\\&+&\frac{3}{\epsilon_1}\sum\limits_{i,j=1}^{n+1}\int_{B_{\frac{1+\eta}{2}r}(z_0)}(\bar{u}_{ij})^2|D\phi|^2dz+\frac{\epsilon_1}{3}\sum\limits_{i,j,m=1}^{n+1}\int_{B_{\frac{1+\eta}{2}r}(z_0)}|\bar{u}_{ijm}|^2\phi^2dz
\\&\leq&\epsilon_1\sum\limits_{i,j,m=1}^{n+1}\int_{B_{\frac{1+\eta}{2}r}(z_0)}|\bar{u}_{ijm}|^2\phi^2dz+\frac{C}{(1-\eta)^2r^2\epsilon_1}\sum\limits_{i,j=1}^{n+1}\int_{B_{\frac{1+\eta}{2}r}(z_0)}|\bar{u}_{ij}|^2dz+C\Lambda^2\int_{B_{\frac{1+\eta}{2}r}(z_0)}\bar{u}^2\phi^2dz,
\end{eqnarray*}
for any $\epsilon_1\in(0,1)$. Here $C$ is a positive constant depending only on $n$. So by choosing $\epsilon_1=\frac{1}{2}$, we obtain
\begin{equation}\label{three order estimate}
I_1\leq \frac{C}{(1-\eta)^2r^2}\int_{B_{\frac{1+\eta}{2}r}(z_0)}|D^2\bar{u}|^2dz+C\Lambda^2\int_{B_{\frac{1+\eta}{2}r}(z_0)}\bar{u}^2dz.
\end{equation}

Next, we estimate the first term of $(\ref{three order estimate})$. In fact, let $\bar{\phi}$ be a $C_0^{\infty}$ cut-off function such that $\bar{\phi}(z)=1$ when $|z-z_0|<\frac{1+\eta}{2}r$, $\bar{\phi}(z)=0$ when $|z-z_0|>r$, $0\leq\bar{\phi}\leq1$, $|D\bar{\phi}|<\frac{C}{(1-\eta)r}$, and $|D^2\bar{\phi}|\leq \frac{C}{(1-\eta)^2r^2}$. Then define
\begin{equation}
\psi=
\begin{cases}
e^{1-\bar{\phi}^{-1}},\quad 0<\bar{\phi}\leq1,\\
0,\quad\bar{\phi}=0.
\end{cases}
\end{equation}
Thus $\psi$ satisfies that for any $l>0$,
\begin{equation*}
\lim\limits_{\bar{\phi}\rightarrow0+}\frac{\psi}{\bar{\phi}^l}=0.
\end{equation*}
Moreover, through some direct calculations,
\begin{equation}\label{derivative of psi}
\begin{cases}
D\psi=\psi\frac{D\bar{\phi}}{\bar{\phi}^2},\\
\triangle\psi=\psi\left(\frac{|D\bar{\phi}|^2}{\bar{\phi}^4}-2\frac{|D\bar{\phi}|^2}{\bar{\phi}^3}+\frac{\triangle\bar{\phi}}{\bar{\phi}^2}\right).
\end{cases}
\end{equation}
By multiplying $\bar{u}\psi$ on both sides of $(\ref{rewrite equation})$ and using integration by parts, we have
\begin{eqnarray*}
\Lambda\int_{B_{r}(z_0)}\bar{u}^2\psi dz&=&\int_{B_{r}(z_0)}\triangle^2\bar{u}\bar{u}\psi dz
\\&=&\int_{B_{r}(z_0)}|\triangle\bar{u}|^2\psi dz+2\int_{B_{r}(z_0)}\triangle\bar{u}D\bar{u}D\psi dz+\int_{B_{r}(z_0)}\triangle\bar{u}\bar{u}\triangle\psi dz
\\&=&\int_{B_{r}(z_0)}|\triangle\bar{u}|^2\psi dz+2\int_{B_{r}(z_0)}\triangle\bar{u}D\bar{u}\psi\frac{D\bar{\phi}}{\bar{\phi}^2}dz+\int_{B_{r}(z_0)}\triangle\bar{u}\bar{u}\psi\left(\frac{|D\bar{\phi}|^2}{\bar{\phi}^4}-2\frac{|D\bar{\phi}|^2}{\bar{\phi}^3}+\frac{\triangle\bar{\phi}}{\bar{\phi}^2}\right)dz.
\end{eqnarray*}
So for any $\epsilon_2\in(0,1)$,
\begin{eqnarray*}
\int_{B_{r}(z_0)}|\triangle\bar{u}|^2\psi dz&\leq&\Lambda\int_{B_{r}(z_0)}|\bar{u}|^2\psi dz+\epsilon_2\int_{B_{r}(z_0)}|\triangle\bar{u}|^2\psi dz\\&+&\frac{C}{\epsilon_2}\int_{B_{r}(z_0)}|D\bar{u}|^2\psi\frac{|D\bar{\phi}|^2}{\bar{\phi}^4}dz+\frac{C}{\epsilon_2}\int_{B_{r}(z_0)}\bar{u}^2\psi\frac{|D\bar{\phi}|^4+|\triangle\bar{\phi}|^2}{\bar{\phi}^8}dz.
\end{eqnarray*}
Choosing $\epsilon_2=\frac{1}{2}$, we have
\begin{equation}
\int_{B_{r}(z_0)}|\triangle\bar{u}|^2\psi dz\leq\frac{C}{(1-\eta)^2r^2}\int_{B_{r}(z_0)}|D\bar{u}|^2\frac{\psi}{\bar{\phi}^4}dz+C\left(\Lambda+(1-\eta)^{-4}r^{-4}\right)\int_{B_{r}(z_0)}\bar{u}^2\frac{\psi}{\bar{\phi}^8}dz.
\end{equation}
Since
\begin{equation}
\int_{B_{r}(z_0)}|D^2\bar{u}|^2\psi dz\leq \int_{B_{r}(z_0)}|\triangle\bar{u}|^2\psi dz+\frac{C}{(1-\eta)^2r^2}\int_{B_{r}(z_0)}|D\bar{u}|^2\frac{\psi}{\bar{\phi}^4}dz,
\end{equation}
which comes from the integration by parts, we have
\begin{equation}\label{second order estimate}
I_2=:\int_{B_{r}(z_0)}|D^2\bar{u}|^2\psi dz\leq\frac{C}{(1-\eta)^2r^2}\int_{B_{r}(z_0)}|D\bar{u}|^2\frac{\psi}{\bar{\phi}^4}dz+C\left(\Lambda+(1-\eta)^{-4}r^{-4}\right)\int_{B_{r}(z_0)}\bar{u}^2\frac{\psi}{\bar{\phi}^8}dz.
\end{equation}
Integration by parts again, for any $\epsilon_3,\ \epsilon_4>0$, 
\begin{eqnarray*}
I_3&=:&\int_{B_{r}(z_0)}|D\bar{u}|^2\frac{\psi}{\bar{\phi}^4}dz=-\int_{B_{r}(z_0)}\bar{u}\triangle\bar{u}\frac{\psi}{\bar{\phi}^4}dz-\int_{B_{r}(z_0)}\bar{u}D\bar{u}D\left(\frac{\psi}{\bar{\phi}^4}\right)dz
\\&\leq&\epsilon_3\int_{B_{r}(z_0)}|\triangle\bar{u}|^2\psi dz+\frac{C}{\epsilon_3}\int_{B_{r}(z_0)}\bar{u}^2\frac{\psi}{\bar{\phi}^8}dz+\epsilon_4\int_{B_{r}(z_0)}|D\bar{u}|^2\frac{\psi}{\bar{\phi}^4} dz+\frac{C}{(1-\eta)^2r^2\epsilon_4}\int_{B_{r}(z_0)}\bar{u}^2\frac{\psi}{\bar{\phi}^{12}}dz.
\end{eqnarray*}
Then by choosing $\epsilon_4=\frac{1}{2}$ and $\epsilon_3=\frac{(1-\eta)^2r^2}{8nC}$, where $C$ is the same positive constant as in $(\ref{second order estimate})$, we have
\begin{equation}\label{two order estimate}
\int_{B_{\frac{1+\eta}{2}r}(z_0)}|D^2\bar{u}|^2dz\leq C\left(\Lambda+(1-\eta)^{-4}r^{-4}\right)\int_{B_{r}(z_0)}\bar{u}^2dz.
\end{equation}
From the inequalities $(\ref{four order estimate})$, $(\ref{three order estimate})$ and $(\ref{two order estimate})$, we have
\begin{equation}
\|\bar{u}\|_{W^{4,2}(B_{\eta r}(z_0))}^2\leq C(\Lambda^2+(1-\eta)^{-8}r^{-8})\int_{B_{r}(z_0)}|\bar{u}|^2dz.
\end{equation}
Then from the fact that $\bar{u}(z)=\bar{u}(x,x_{n+1})=u(x)e^{\sqrt{\frac{\lambda}{2}}x_{n+1}}$, the case $|\alpha|=0$ of $(\ref{W42 estimate})$ is obtained.

Since for any multi-index $\alpha$, 
\begin{equation*}
\triangle^2D^{\alpha}\bar{u}-\Lambda D^{\alpha}\bar{u}=0,\quad in\quad\Omega\times\mathbb{R},
\end{equation*}
the desired result is obtained by applying the above argument to $D^{\alpha}\bar{u}$ and the fact that $\bar{u}(z)=\bar{u}(x,x_{n+1})=u(x)e^{\sqrt{\frac{\lambda}{2}}x_{n+1}}$.
\end{proof}

\begin{remark}\label{interior remark}
From the Sobolev interpolation inequality, for any $\epsilon>0$ and any $u\in W^{4,2}(B_r(z_0))$,
\begin{equation}\label{interpolation}
\|u\|_{W^{1,2}(B_{r}(z_0))}\leq \epsilon\|u\|_{W^{4,2}(B_r(z_0))}+C\epsilon^{-1/3}\|u\|_{L^2(B_r(z_0))}.
\end{equation}
Then from $(\ref{interpolation})$ with $\epsilon=\left(\Lambda^{1/4}+\frac{1}{(1-\eta)r}\right)^{-3}$ and Lemma $\ref{interior estimate}$, for any $\eta\in(0,1)$ and any $r>0$ such that $dist(x_0,\partial\Omega)>r$, 
\begin{eqnarray}
\|\bar{u}\|_{W^{1,2}(B_{\eta r}(z_0))}\leq C\left(\Lambda^{1/4}+\frac{1}{(1-\eta)r}\right)\|\bar{u}\|_{L^2(B_r(z_0))}.
\end{eqnarray}
So by the iteration argument,
\begin{eqnarray}
\|\bar{u}\|_{W^{m,2}(B_{\eta r}(z_0))}&\leq&C\left(\Lambda^{1/4}+\frac{m}{(1-\eta)r}\right)\|\bar{u}\|_{W^{m-1,2}(B_{\left(\eta+\frac{1-\eta}{m}\right)r}(z_0))}\nonumber
\\&\leq&C^2\left(\Lambda^{1/4}+\frac{m}{(1-\eta)r}\right)^2\|\bar{u}\|_{W^{m-2,2}(B_{\left(\eta+2\frac{1-\eta}{m}\right)r}(z_0))}\nonumber
\\&\leq&\cdots\nonumber
\\&\leq&C^m\left(\Lambda^{1/4}+\frac{m}{(1-\eta)r}\right)^m\|\bar{u}\|_{L^2(B_r(z_0))},
\end{eqnarray}
where $z_0=(x_0,0)$, and $C$ is a positive constant depending only on $n$.
\end{remark}

Now we focus on the boundary estimates. In fact, for any fixed point $x_0\in\partial\Omega$, we locally flat the boundary $\partial\Omega$ near $x_0$ by the following process. Without loss of generality, assume that $x_0=0$, and for $r$ small enough, the set $\Omega\cap B_r(0)$ can be expressed in the following form:
\begin{equation*}
\Omega\cap B_r(x_0)=\left\{x\in B_r(x_0)\ |\ x_n>\gamma(x_1,\cdots,x_{n-1})\right\}.
\end{equation*}
where $\gamma$ is a real analytic function. Then define $z=(x,x_{n+1})=(x_1,\cdots,x_n,x_{n+1})$, and
\begin{equation}\label{equations changing coordinates}
\begin{cases}
y_i=\Phi_i(z):=x_i,\quad i=1,2,\cdots,n-1,\\
y_n=\Phi_n(z):=x_n-\gamma(x_1,\cdots,x_{n-1}),\\
y_{n+1}=x_{n+1}.
\end{cases}
\end{equation}
It is also denoted by $y=\Phi(z)$. Similarly, we write $z=\Psi(y)$ with $\Psi=\Phi^{-1}$. Then the map $y=\Phi(z)$ straightens $\partial\Omega$ near $0$, and pushes $\Omega\cap B_r(0)$ to $\Phi(\Omega\cap B_r(0))\subseteq B_R^+(x_0,0)$ for some $r,R>0$, and the map $\Phi$ is real analytic. Under this transformation, $\bar{u}(z)$ and $\bar{v}(z)=\triangle \bar{u}(z)$ become $\bar{\bar{u}}(y)$ and $\bar{\bar{v}}(y)$, i.e., $\bar{\bar{u}}(y)=\bar{u}(\Psi(y))$ and $\bar{\bar{v}}(y)=\bar{v}(\Psi(y))$, respectively, which satisfy the following equations:
\begin{equation}\label{basic equations after changing coordinates1}
\begin{cases}
\mathcal{L}\bar{\bar{u}}=a_{ij}(y)\bar{\bar{u}}_{ij}(y)+b_i(y)\bar{\bar{u}}_i(y)=\bar{\bar{v}}(y),\\
\mathcal{L}\bar{\bar{v}}=a_{ij}(y)\bar{\bar{v}}_{ij}(y)+b_i(y)\bar{\bar{v}}_i(y)=\Lambda\bar{\bar{u}}(y),
\end{cases}
\end{equation}
in the domain $\Phi(\Omega\cap B_r(0))\times(-\infty,+\infty)$. 
Here the coefficients $a_{ij}(y)=\sum\limits_{m=1}^n\Phi_{x_m}^i(\Psi(y))\Phi^j_{x_m}(\Psi(y))$, and $b_i(y)=\sum\limits_{j=1}^n\Phi^i_{x_jx_j}(\Psi(y))$. The coefficients $a_{ij}$ and $b_i$ are also analytic, and the operator $\mathcal{L}$ is uniformly elliptic. The map $\Phi$ can be chosen such that
\begin{equation}\label{coefficient condition}
a_{ij}(0)=\delta_{ij}, \quad |a_{ij}(y)-a_{ij}(0)|\leq C_0|y|,
\end{equation}
for any $i,j=1,\cdots,n+1$. Here $C_0$ is a positive constant depending only on $n$ and $\Omega$. Then for $r$ small enough and some positive constant $\delta_0$, the matrix $\{a_{ij}\}$ satisfies that $a_{ij}(y)\xi_i\xi_j\geq \delta_0|\xi|^2$ for any $y\in B_r^+(0)\subseteq\mathbb{R}^{n+1}$ and $\xi\in \mathbb{R}^{n+1}$. Moreover, the boundary condition becomes
\begin{equation}\label{boundary conditions after changing coordinates}
\bar{\bar{u}}=0,\quad
\bar{\bar{u}}_n=0,
\end{equation}
on $\Gamma^*$, the flat boundary of $B_r^+(0)\subseteq\mathbb{R}^{n+1}$.

Now we use this transformation to establish the boundary estimate below.
\begin{lemma}\label{boundary estimate1}
There exist positive constants $\rho_0$ and $L_0$ depending only on $n$, $\Omega$, and $\mathcal{L}$, such that for any integer $m$ and some $\rho<\rho_0$,
\begin{equation}
\sup\limits_{x\in B_{\rho}^+(0)}|D^{\alpha}\bar{\bar{u}}|\leq m!L_0^me^{\Lambda^{1/4}}\|\bar{\bar{u}}\|_{L^2(B_{\rho_0}^+(0))},\quad\forall\ |\alpha|=m.
\end{equation}
\end{lemma}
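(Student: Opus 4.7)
The proof will be an adaptation of the classical Morrey--Nirenberg analytic-regularity scheme up to an analytic boundary, carried out so that the dependence on $\Lambda$ is tracked explicitly and only enters the final bound through the single factor $e^{\Lambda^{1/4}}$. Working in the flattened half-ball $B_{\rho_0}^+(0)$ after the map $\Phi$, the function $\bar{\bar{u}}$ satisfies the fourth-order equation equivalent to the system $(\ref{basic equations after changing coordinates1})$ with real-analytic coefficients obeying $(\ref{coefficient condition})$ and the clamped boundary conditions $(\ref{boundary conditions after changing coordinates})$ on the flat part $\Gamma^*$. Crucially, because $\bar{\bar{u}}$ and $\bar{\bar{u}}_n$ vanish on $\Gamma^*$, every tangential derivative $D^{\alpha}\bar{\bar{u}}$ with $\alpha_n=0$ inherits the same boundary conditions, which will drive the induction.

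The first step is to establish a boundary analogue of Lemma $\ref{interior estimate}$. I would repeat on $B_\rho^+(0)$ the weighted integration-by-parts identity used there; every boundary contribution on $\Gamma^*$ vanishes thanks to $(\ref{boundary conditions after changing coordinates})$. The pure normal derivatives $\partial_n^4\bar{\bar{u}}$ produced by the energy method are controlled by tangential ones through Lemma $\ref{basic theorem 3}$ and by the equation, which solves algebraically for $\partial_n^4\bar{\bar{u}}$ in terms of lower-order derivatives plus $\Lambda\bar{\bar{u}}$. The outcome should be an estimate of the form
$$\|\bar{\bar{u}}\|_{W^{4,2}(B_{\eta\rho}^+)}\leq C\bigl(\Lambda+(1-\eta)^{-4}\rho^{-4}\bigr)\|\bar{\bar{u}}\|_{L^2(B_\rho^+)},\qquad\eta\in(0,1),$$
entirely parallel in structure to $(\ref{W42 estimate})$.

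The second step iterates this estimate analytically. Differentiating $\mathcal{L}\bar{\bar{u}}=\bar{\bar{v}}$ and $\mathcal{L}\bar{\bar{v}}=\Lambda\bar{\bar{u}}$ tangentially and applying the Leibniz rule, the analytic bounds $|D^{\beta}a_{ij}|,|D^{\beta}b_i|\leq|\beta|!M_0^{|\beta|}$ on the commutators with $\mathcal{L}$ can be absorbed by the standard Morrey--Nirenberg combinatorial bookkeeping on shrinking half-balls, exactly as in Remark $\ref{interior remark}$. This should yield, for all $m\geq0$ and some $\rho<\rho_0$,
$$\|\bar{\bar{u}}\|_{W^{m,2}(B_{\rho}^+)}\leq K^m\bigl(\Lambda^{1/4}+m\bigr)^m\|\bar{\bar{u}}\|_{L^2(B_{\rho_0}^+)},$$
with $K$ depending only on $n,\Omega,\mathcal{L}$. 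To pass to the stated form, I would use $(\Lambda^{1/4}+m)^m\leq 2^m((\Lambda^{1/4})^m+m^m)$, the elementary fact $(\Lambda^{1/4})^m\leq m!\,e^{\Lambda^{1/4}}$ coming from the Taylor expansion of $e^x$, and Stirling's bound $m^m\leq e^m m!$, to obtain $(\Lambda^{1/4}+m)^m\leq m!\,\tilde L^m e^{\Lambda^{1/4}}$. A Sobolev embedding $W^{m+k,2}(B_\rho^+)\hookrightarrow L^\infty(B_\rho^+)$ with $k=\lceil(n+1)/2\rceil+1$ finally converts the Sobolev bound to the pointwise one, the shift by $k$ being absorbed into $L_0$.

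The main obstacle is the analytic iteration in the second step. The delicate point is coordinating three factorial scales at once: the growth $|\beta|!M_0^{|\beta|}$ of the derivatives of the analytic coefficients coming from Leibniz commutators, the factorial growth $m!L_0^m$ allowed on $\bar{\bar{u}}$, and the $\Lambda$-factor arising every time the equation is used to express $\partial_n^4\bar{\bar{u}}$. One must set up the induction so that each added derivative costs at most one factor of $(\Lambda^{1/4}+m)$, because any slackness would replace $e^{\Lambda^{1/4}}$ by a $\Lambda$-dependent expression blowing up with $m$. Keeping $L_0$ strictly independent of $\Lambda$ while preserving the half-ball geometry and the absorption provided by Lemma $\ref{basic theorem 3}$ is the technical heart of the argument.
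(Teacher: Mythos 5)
Your proposal is correct in spirit but takes a genuinely different, and much heavier, route than the paper. The paper's proof hinges on a single slick observation: since $\mathcal{L}^2\bar{\bar{u}}=\Lambda\bar{\bar{u}}$, iterating gives $(\mathcal{L}^2)^l\bar{\bar{u}}=\Lambda^l\bar{\bar{u}}$ exactly, so $\|(\mathcal{L}^2)^l\bar{\bar{u}}\|_{L^2}=\Lambda^l\|\bar{\bar{u}}\|_{L^2}\leq(4l)!\,e^{\Lambda^{1/4}}\|\bar{\bar{u}}\|_{L^2}$. This is precisely the hypothesis of the boundary analyticity theorem (Theorem~1.3, Chapter~8 of Lions--Magenes), applied to the normalized function $\bar{\bar{u}}/(e^{\Lambda^{1/4}}\|\bar{\bar{u}}\|_{L^2})$, from which the conclusion follows immediately with constants $\rho$ and $L_0$ that are automatically independent of $\Lambda$, because $\Lambda$ has been absorbed into the normalization. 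You, by contrast, propose to re-derive the boundary Morrey--Nirenberg analytic estimates from scratch, tracking $\Lambda$ through every shrinking half-ball. Interestingly, the two arguments lean on the same elementary inequality (your $(\Lambda^{1/4})^m\leq m!\,e^{\Lambda^{1/4}}$ is the paper's $\Lambda^l\leq(4l)!\,e^{\Lambda^{1/4}}$); the paper just applies it once at the level of iterated operators, before invoking the black box, while you apply it at the end of a long induction. What the paper's route buys is a proof that is essentially three lines and completely avoids the "three factorial scales" bookkeeping you yourself flag as the technical heart of your argument. What your route would buy, if carried out, is self-containment: you would not need the Lions--Magenes theorem as a black box. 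As written, however, your proposal stops precisely at the hard step --- the analytic induction with variable analytic coefficients, commutators, and the $\partial_n^4$-elimination on shrinking half-balls is described but not executed --- and there is also a small inaccuracy in step one (multiplying by $\bar{\bar{u}}_{mmll}$ and integrating by parts on $B_\rho^+$ does not in general produce boundary terms that all vanish under $\bar{\bar{u}}=\bar{\bar{u}}_n=0$ alone; one must restrict to tangential multipliers or integrate by parts more carefully). So your plan is plausible and genuinely different, but it amounts to reproving the cited theorem in a special case, and in its present form it is a program rather than a proof.
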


\begin{proof}
By the above transformation, there exists a positive constant $\rho_0$ depending only on $n$ and $\Omega$, such that the function $\bar{\bar{u}}$ satisfies the equation $\mathcal{L}^2\bar{\bar{u}}=\Lambda\bar{\bar{u}}$ in $B_{\rho_0}^+(0)$, and $\bar{\bar{u}}=\bar{\bar{u}}_n=0$ on $\Gamma^*$. Then
\begin{equation}
\|(\mathcal{L}^2)^l\bar{\bar{u}}\|_{L^2(B_{\rho_0}^+(0))}=\Lambda^l\|\bar{\bar{u}}\|_{L^2(B_{\rho_0}^+(0))}\leq (4l)!e^{\Lambda^{1/4}}\|\bar{\bar{u}}\|_{L^2(B_{\rho_0}^+(0))}.
\end{equation}
For the operator $\mathcal{L}^2$ and the function $\frac{\bar{\bar{u}}}{e^{\Lambda/4}\|\bar{\bar{u}}\|_{L^2(B_{\rho_0}^+)}}$, we use Theorem $1.3$ in Chapter $8$ in $\cite{Lions and magenes}$ to obtain that, there exist positive constants $\rho$ and $L_0$ depending only on $n$, $\Omega$ and $z_0$, such that for any positive integer $m$,
\begin{equation}
\sup\limits_{x\in B_{\rho}^+(0)}|D^{\alpha}\bar{\bar{u}}|\leq m!L_0^me^{\Lambda^{1/4}}\|\bar{\bar{u}}\|_{L^2(B_{\rho_0}^+(0))},\quad\forall\ |\alpha|=m.
\end{equation}
\end{proof}

\begin{lemma}\label{explain theorem}
Let $u$ be a solution to the problem $(\ref{basic equations})$. Then there exists a positive constant $R$ depending only on $n$ and $\partial\Omega$, such that $u$ can be extended into the neighborhood $\Omega_R:=\left\{x\in\mathbb{R}^n\ |\ dist(x,\Omega)<R\right\}$ with
\begin{equation}
\|u\|_{L^{\infty}(\Omega_R)}\leq e^{C\Lambda^{\frac{1}{4}}}\|u\|_{L^2(\Omega)},
\end{equation}
where $C$ and $R$ are positive constants depending only on $n$ and $\Omega$.
\end{lemma}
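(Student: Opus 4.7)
The plan is to combine Lemma \ref{boundary estimate1} with a Taylor series argument to produce a genuine analytic extension across the boundary, and then to patch this with the interior analyticity using a finite covering of $\partial\Omega$. The key observation is that Lemma \ref{boundary estimate1} gives \emph{Cauchy-type} bounds of the shape $m! L_0^m$ on all derivatives of $\bar{\bar{u}}$, which is exactly what is needed for the Taylor series of $\bar{\bar{u}}$ at a point $y_\ast$ to converge in a ball of radius comparable to $1/L_0$, independently of $\Lambda$. Choosing $y_\ast$ on the flat boundary $\Gamma^\ast$, the resulting power series provides an analytic extension of $\bar{\bar{u}}$ across $\Gamma^\ast$ into $\{y_n<0\}$. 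Pulling back by $\Psi=\Phi^{-1}$ (analytic, since $\gamma$ is analytic) gives an analytic extension of $\bar{u}$ across $\partial\Omega\times\mathbb{R}$, and restricting to $x_{n+1}=0$ yields the desired extension of $u$.

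First, I would fix $x_0\in\partial\Omega$, use the flattening coordinates from \eqref{equations changing coordinates} and apply Lemma \ref{boundary estimate1} to get $|D^\alpha\bar{\bar{u}}(y)|\leq m!L_0^m e^{\Lambda^{1/4}}\|\bar{\bar{u}}\|_{L^2(B_{\rho_0}^+(0))}$ for every $|\alpha|=m$ and every $y\in B_\rho^+(0)$. For any center $y_\ast\in B_{\rho/2}^+(0)$, the Taylor series $\sum_\alpha \frac{D^\alpha\bar{\bar{u}}(y_\ast)}{\alpha!}(y-y_\ast)^\alpha$ is then majorized term by term by the geometric series $e^{\Lambda^{1/4}}\|\bar{\bar{u}}\|_{L^2(B_{\rho_0}^+(0))}\sum_m \bigl((n+1)L_0|y-y_\ast|\bigr)^m$, which converges absolutely for $|y-y_\ast|<R_1:=1/(2(n+1)L_0)$ and is bounded there by $2e^{\Lambda^{1/4}}\|\bar{\bar{u}}\|_{L^2(B_{\rho_0}^+(0))}$. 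Since $R_1$ depends only on $n$ and $\Omega$, this analytically extends $\bar{\bar{u}}$ across $\Gamma^\ast$ in a uniform neighborhood, and transforming back by the analytic $\Psi$ extends $\bar{u}$ across $\partial\Omega\times\mathbb{R}$ in a $\Psi$-image neighborhood of uniform size. Setting $x_{n+1}=0$ gives the analytic extension of $u$ across $\partial\Omega$.

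Next, I would invoke compactness of $\partial\Omega$ to cover it by finitely many such boundary charts, extracting a uniform extension width $R>0$ depending only on $n$ and $\Omega$. On the interior piece of $\Omega_R$, Remark \ref{interior remark} combined with the Sobolev embedding $W^{m,2}\hookrightarrow L^\infty$ (for $m>(n+1)/2$ with $m$ fixed) gives the pointwise bound $\|u\|_{L^\infty(K)}\leq e^{C\Lambda^{1/4}}\|u\|_{L^2(\Omega)}$ on any compact $K\subset\Omega$. To pass from the $L^2$ bound $\|\bar{\bar{u}}\|_{L^2(B_{\rho_0}^+(0))}$ back to $\|u\|_{L^2(\Omega)}$, I would use the analytic Jacobian of $\Phi$ (uniformly bounded) together with the identity $\bar{u}(x,x_{n+1})=u(x)e^{\sqrt{\lambda/2}\,x_{n+1}}$: on a region with $|x_{n+1}|\leq C\rho_0$ the exponential weight contributes at most $e^{C\sqrt{\lambda}}$, and since $\Lambda=\lambda^2/4+k^2\geq \lambda^2/4$ we have $\sqrt{\lambda}\leq\sqrt{2}\,\Lambda^{1/4}$, so this factor is absorbed into $e^{C\Lambda^{1/4}}$. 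Assembling the interior and boundary-neighborhood estimates gives $\|u\|_{L^\infty(\Omega_R)}\leq e^{C\Lambda^{1/4}}\|u\|_{L^2(\Omega)}$.

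The main obstacle is bookkeeping of constants across the three layers of the argument, namely the $e^{\Lambda^{1/4}}$ in Lemma \ref{boundary estimate1}, the weight $e^{\sqrt{\lambda/2}\,x_{n+1}}$ in $\bar{u}$, and the geometric majorant in the Taylor series. The delicate point is to verify that the radius of extension $R$ does \emph{not} depend on $\Lambda$; this works precisely because $L_0$ in Lemma \ref{boundary estimate1} is $\Lambda$-independent, so the radius of convergence $R_1\sim 1/L_0$ of the Taylor majorant is $\Lambda$-free and only the prefactor blows up as $e^{C\Lambda^{1/4}}$. A secondary subtlety is that one must expand $\bar{\bar{u}}$ in all $n+1$ variables, including $y_{n+1}$, even though only the $y_n$-extension is geometrically needed, since the boundary lemma supplies bounds on \emph{all} derivatives; this is automatic from Lemma \ref{boundary estimate1} as stated.
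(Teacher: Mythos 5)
Your proposal is correct and follows essentially the same route as the paper: boundary Cauchy estimates from Lemma \ref{boundary estimate1}, a geometric majorant for the Taylor series giving a $\Lambda$-independent radius of analyticity $\sim 1/L_0$, finite covering of $\partial\Omega$, and Remark \ref{interior remark} plus Sobolev embedding for the interior. The only cosmetic difference is that you run the Taylor expansion on $\bar{\bar{u}}$ in the flattened $y$-coordinates and pull back by $\Psi$, whereas the paper first transfers the Cauchy bounds to $D^\alpha u$ in the original $x$-coordinates (via the chain rule, absorbed into $L_0$, $\tau$ and the finite cover) and expands $u$ directly from points of the tubular neighborhood $T_{r_0}(\partial\Omega)$; this changes the combinatorial factor from $(n+1)^m$ to $n^m$ but is otherwise the same argument.
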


\begin{proof}
By Lemma $\ref{boundary estimate1}$ and the finite covering, there exist constants $r_0>0$, $\tau>1$, and $L_0>1$ depending only on $n$ and $\Omega$, such that for any positive integer $m$,
\begin{equation}\label{boundary L infinity}
\sup\limits_{T_{r_0}(\partial\Omega)\times(-r_0,r_0)}|D^{\alpha}\bar{u}|\leq m!L_0^me^{\Lambda^{1/4}}\|\bar{u}\|_{L^2(\Omega\times(-\tau r_0,\tau r_0))},\quad\forall\ |\alpha|=m,
\end{equation}
where $T_{r_0}(\partial\Omega)=\{x\in\Omega\ |\ dist(x,\partial\Omega)<r_0\}$.
From $(\ref{boundary L infinity})$ and the fact that $\bar{u}(x,x_{n+1})=u(x)e^{\Lambda^{1/4}x_{n+1}}$, for any $x\in T_{r_0}(\partial\Omega)$,
\begin{equation}
|D^{\alpha}u(x)|\leq  m!L_0^me^{C\Lambda^{1/4}}\|u\|_{L^2(\Omega)} ,\quad\forall\ |\alpha|=m.
\end{equation}
Since $\partial\Omega$ is compact, one can extend $u(x)$ analytically into a neighborhood of $\Omega$, denoted by $\Omega_R=\{x\in\mathbb{R}^n\ |\ dist(x,\Omega)<R\}$. Here $R$ is a positive constant depending only on $\partial\Omega$ and $n$. In fact, for any $x_0\in T_{r_0}(\partial\Omega)$ and any $x\in B_{R}(x_0)$, there holds
\begin{equation*}
u(x)=\sum\limits_{|\alpha|=0}^{\infty}\frac{1}{\alpha!}D^{\alpha}u(x_0)(x-x_0)^{\alpha}.
\end{equation*}
That is the Taylor series of $u(x)$. So by choosing $R\leq cL_0^{-1}$, where $c$ is a positive constant depending only on $n$,

\begin{eqnarray}\label{boundary L infinity2}
|u(x)|&\leq&\sum\limits_{|\alpha|=0}^{\infty}\frac{1}{\alpha!}|D^{\alpha}u(x_0)||x-x_0|^{|\alpha|}\leq e^{C\Lambda^{1/4}}\|u\|_{L^2(\Omega)}\sum\limits_{|\alpha|=0}^{\infty}\frac{|\alpha|!(RL_0)^{|\alpha|}}{\alpha!}\nonumber
\\&\leq&e^{C\Lambda^{1/4}}\|u\|_{L^2(\Omega)}\sum\limits_{m=0}^{\infty}(RL_0)^m\sum\limits_{|\alpha|=m}\frac{|\alpha|!}{\alpha!}\nonumber
\\&\leq&e^{C\Lambda^{1/4}}\|u\|_{L^2(\Omega)}\sum\limits_{m=0}^{\infty}(nRL_0)^m\nonumber
\\&\leq&C'e^{C\Lambda^{1/4}}\|u\|_{L^2(\Omega)}.
\end{eqnarray}

Here we used the fact that $n^m=(1+1+\cdots+1)^m=\sum\limits_{|\alpha|=m}\frac{|\alpha|!}{\alpha!}$.

On the other hand, from Sobolev Embedding Theorem and Remark $\ref{interior remark}$ with $\eta=1/2$ and $r=(C\Lambda^{1/4})^{-1}\leq r_0$,  for any $z=(x,0)$ with $x\in\Omega\setminus T_{r_0}(\partial\Omega)$,
\begin{eqnarray}\label{interior L infinity}
|\bar{u}(z)|&\leq&\|\bar{u}\|_{L^{\infty}(B_{\eta r}(z))}\leq C(\eta r)^{-(n+1)/2}\|\bar{u}\|_{W^{l,2}(B_{\eta r}(z))}\nonumber
\\&\leq& C^{l}(l)^{l}\Lambda^{\frac{l}{4}+\frac{n+1}{8}}\|\bar{u}\|_{L^2(B_r(z))} \leq e^{C\log\Lambda}\|\bar{u}\|_{L^2(\Omega\times(-2r_0,2r_0))},\quad\forall\ |\alpha|=m,
\end{eqnarray}
where 
$l=\left[\frac{n+1}{2}\right]+1$. From $(\ref{interior L infinity})$ and the fact that $\bar{u}(x,x_{n+1})=u(x)e^{\Lambda^{1/4}x_{n+1}}$ again, for any $x\in \Omega\setminus T_{r_0}(\partial\Omega)$,
\begin{equation}\label{interior L infinity2}
|u(x)|=|\bar{u}(x,0)|\leq e^{C\log\Lambda}\|\bar{u}\|_{L^2(\Omega\times(-2r_0,2r_0))}\leq e^{C(\Lambda^{1/4}+\log\Lambda)}\|u\|_{L^2(\Omega)}\leq e^{C\Lambda^{1/4}}\|u\|_{L^2(\Omega)}.
\end{equation}
The inequalities $(\ref{boundary L infinity2})$ and $(\ref{interior L infinity2})$ complete the proof.
\end{proof}

\section{Quantitative Unique Continuation Property}
In this section, we will give the upper bound for the vanishing order of $u$ in $\Omega$. From Section $2$, $u$ is analytic in $\Omega_R$, so are $\triangle u$ and $\triangle^2u$. Therefore, by the uniqueness of the analytic continuation, the equation of $(\ref{basic equations})$ holds in $\Omega_R$.
Then we rewrite it in $\Omega_R$ as follows. Let $\widetilde{u}(x,x_{n+1})=u(x)e^{\sqrt{\frac{\lambda+\mu}{2}} x_{n+1}}$ and $\widetilde{v}(x,x_{n+1})=\left(\triangle u(x)+\frac{\lambda+\mu}{2}u(x)\right)e^{\sqrt{\frac{\lambda+\mu}{2}}x_{n+1}}$ with $\mu=\sqrt{\lambda^2+4k^2}$.
Then $\widetilde{u}$ satisfies that
\begin{equation}\label{new equations with mu}
\begin{cases}
\triangle\widetilde{u}=\widetilde{v},\\
\triangle\widetilde{v}=\mu\widetilde{v},
\end{cases}
\end{equation}
in $\Omega_R\times\mathbb{R}$.
So we define the frequency function and doubling index as follows.
\begin{definition}\label{definition of frequency and doubling index}
Let $\widetilde{u}(z)=\widetilde{u}(x,x_{n+1})=u(x)e^{\sqrt{\frac{\lambda+\mu}{2}} x_{n+1}}$ as above. Then $\widetilde{u}$ satisfies the equation $(\ref{new equations with mu})$. For $z_0=(x_0,0)$
we call the following quantities
\begin{equation}
N(z_0,r)=r\frac{\int_{B_r(z_0)}\left(|D\widetilde{u}|^2+|D\widetilde{v}|^2+\widetilde{u}\widetilde{v}+\mu|\widetilde{v}|^2\right)dz}{\int_{\partial B_r(z_0)}\left(\widetilde{u}^2+\widetilde{v}^2\right)d\sigma}=r\frac{\int_{\partial B_r(z_0)}\left(\widetilde{u}\widetilde{u}_{\nu}+\widetilde{v}\widetilde{v}_{\nu}\right)d\sigma}{\int_{\partial B_r(z_0)}\left(\widetilde{u}^2+\widetilde{v}^2\right)d\sigma},
\end{equation}
the frequency function with radius $r$ centered at $z_0$, and
\begin{equation}
M(z_0,r)=\frac{1}{2}log_2\left(\frac{\|\widetilde{u}\|^2_{L^{\infty}(B_r(z_0))}+\|\widetilde{v}\|^2_{L^{\infty}(B_r(z_0))}}{\|\widetilde{u}\|^2_{L^{\infty}(B_{r/2}(z_0))}+\|\widetilde{v}\|^2_{L^{\infty}(B_{r/2}(z_0))}}\right),
\end{equation}
the doubling index with radius $r$ centered at $z_0$, respectively.
\end{definition}



We will show the ``almost monotonicity formula'' for $N(z_0,r)$.

\begin{lemma}\label{monotonicity formula}
For $z_0=(x_0,0)$ with $x_0\in\Omega$, there exist positive constants $C_0$, $C$, and $r_0<R$, such that if $N(z_0,r)\geq C_0$ and $r<r_0$, it holds that
\begin{equation}
\frac{N'(z_0,r)}{N(z_0,r)}\geq-Cr.
\end{equation}
\end{lemma}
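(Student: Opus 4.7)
The approach is the classical Garofalo--Lin frequency method, extended to the $2\times 2$ system $\triangle\widetilde{u}=\widetilde{v}$, $\triangle\widetilde{v}=\mu\widetilde{v}$. Introduce the shorthands $H(r):=\int_{\partial B_r(z_0)}(\widetilde{u}^2+\widetilde{v}^2)\,d\sigma$ and $D(r):=\int_{B_r(z_0)}(|D\widetilde{u}|^2+|D\widetilde{v}|^2+\widetilde{u}\widetilde{v}+\mu\widetilde{v}^2)\,dz$, so that $N(z_0,r)=rD(r)/H(r)$ and logarithmic differentiation gives $\tfrac{N'}{N}=\tfrac{1}{r}+\tfrac{D'}{D}-\tfrac{H'}{H}$. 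The standard sphere-slicing formula in $\mathbb{R}^{n+1}$, combined with integration by parts applied to the equivalent boundary expression $D(r)=\int_{\partial B_r}(\widetilde{u}\widetilde{u}_\nu+\widetilde{v}\widetilde{v}_\nu)\,d\sigma$, yields $H'(r)=\tfrac{n}{r}H(r)+2D(r)$.

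For $D'(r)=\int_{\partial B_r}(|D\widetilde{u}|^2+|D\widetilde{v}|^2+\widetilde{u}\widetilde{v}+\mu\widetilde{v}^2)\,d\sigma$, I would apply the Rellich--Pohozaev identity to each equation separately: multiply $\triangle\widetilde{u}=\widetilde{v}$ by $(z-z_0)\cdot D\widetilde{u}$ and $\triangle\widetilde{v}=\mu\widetilde{v}$ by $(z-z_0)\cdot D\widetilde{v}$, integrate over $B_r(z_0)\subset\mathbb{R}^{n+1}$, and use $\nu=(z-z_0)/r$ on $\partial B_r$. Summing the two identities and replacing $\int_{B_r}(|D\widetilde{u}|^2+|D\widetilde{v}|^2)$ by $D(r)-\int_{B_r}(\widetilde{u}\widetilde{v}+\mu\widetilde{v}^2)$ produces the structural identity $D'(r)=2J(r)+\tfrac{n-1}{r}D(r)+\mathcal{E}(r)$, where $J(r):=\int_{\partial B_r}(\widetilde{u}_\nu^2+\widetilde{v}_\nu^2)\,d\sigma$ and $\mathcal{E}(r)$ collects the remaining lower-order boundary and bulk integrals involving $\widetilde{u}\widetilde{v}$ and $\mu\widetilde{v}^2$. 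By Cauchy--Schwarz applied to $D(r)=\int_{\partial B_r}(\widetilde{u}\widetilde{u}_\nu+\widetilde{v}\widetilde{v}_\nu)\,d\sigma$, one has $D(r)^2\le H(r)J(r)$, hence $2J(r)\ge 2D(r)^2/H(r)$. Substituting these ingredients into $N'/N$ the $\tfrac{1}{r}$, $\tfrac{n}{r}$, and $\tfrac{2D}{H}$ contributions cancel exactly, leaving $\tfrac{N'(r)}{N(r)}\ge\tfrac{\mathcal{E}(r)}{D(r)}$.

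The main technical obstacle is to show $|\mathcal{E}(r)|\le CrD(r)$ under the hypothesis $N(z_0,r)\ge C_0$. Using $|\widetilde{u}\widetilde{v}|\le\tfrac12(\widetilde{u}^2+\widetilde{v}^2)$ together with Cauchy--Schwarz and standard trace estimates, each piece of $\mathcal{E}$ can be bounded schematically as $|\mathcal{E}(r)|\le C(1+\mu r^2)H(r)/r$. Because $H(r)/r=D(r)/N(z_0,r)$, this translates into $|\mathcal{E}(r)|/D(r)\le C(1+\mu r^2)/N(z_0,r)$. Selecting $r_0$ small enough to keep $\mu r_0^2$ under control and $C_0$ sufficiently large (of order $\sqrt{\mu}$, which is consistent with the expected vanishing-order bound $C(\sqrt{\lambda}+\sqrt{k})$) forces this ratio to be $\le Cr$, giving the claimed $N'/N\ge -Cr$. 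The delicate accounting is tracking how the Helmholtz-type term $\mu\widetilde{v}$ propagates through the Pohozaev manipulation: every factor of $\mu$ that appears must be balanced either against the smallness of $r$ or against the largeness of $N(z_0,r)$ furnished by the hypothesis $N\ge C_0$.
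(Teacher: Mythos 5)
Your overall framework is the right one and matches the paper: split $N=rD/H$, compute $H'$, obtain $D'$ by Rellich--Pohozaev identities for the two equations of the system, use Cauchy--Schwarz on $D(r)=\int_{\partial B_r}(\widetilde{u}\widetilde{u}_\nu+\widetilde{v}\widetilde{v}_\nu)\,d\sigma$ to compare against $J(r)$, and estimate the remaining error. However, your treatment of the $\mu$-dependent error is where the argument breaks, and it breaks in a way that matters.

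The key structural fact you miss is that the terms containing $\mu$ do \emph{not} belong in an error $\mathcal{E}$ that needs to be made small. When you differentiate $D_4(r)=\mu\int_{B_r}\widetilde{v}^2\,dz$, the boundary term $\mu\int_{\partial B_r}\widetilde{v}^2\,d\sigma$ appears; but the Pohozaev identity applied to $\triangle\widetilde{v}=\mu\widetilde{v}$ produces exactly $-\mu\int_{\partial B_r}\widetilde{v}^2\,d\sigma$ (up to sign) on the right, so the two cancel identically. What survives is a bulk term $\tfrac{2\mu}{r}\int_{B_r}\widetilde{v}^2\,dz\geq 0$, which has a \emph{favorable sign} and can simply be dropped from the lower bound. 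In the paper's bookkeeping this is the cancellation of $I_2=\tfrac{2\mu}{r}\int_{B_r}\widetilde{v}\,D\widetilde{v}\cdot z\,dz$ between $D_2'$ and $D_4'$. Because of this, the genuine error is
\[
\mathcal{E}(r)\ \gtrsim\ -\tfrac{C}{r}|D_3(z_0,r)|-|I_1|-CH(r),
\]
with no factor of $\mu$ whatsoever, and after the paper's estimates $|D_3|\le Cr^2D$, $|I_1|\le CrD+CH$, $H\le \tfrac{r}{C_0}D$ one gets $\mathcal{E}/D\geq -Cr(1+1/C_0)$, which is $\geq -C'r$ for any \emph{fixed} $C_0>0$.

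Your proposed bound $|\mathcal{E}(r)|\le C(1+\mu r^2)H(r)/r$ therefore does not hold (there is no $\mu$ in the true error), and even if one took it at face value it would give $|\mathcal{E}|/D\le C(1+\mu r^2)/N(z_0,r)$, which can only be dominated by $Cr$ if $N(z_0,r)\gtrsim (1+\mu r^2)/r\to\infty$ as $r\to 0$. No fixed $C_0$ --- in particular not $C_0\sim\sqrt{\mu}$ --- can satisfy this, so your argument would never close and would moreover contradict the lemma's assertion that $C_0$ and $r_0$ are fixed constants (they can be taken depending only on $n$, as the later lemmas of the paper require). The fix is to track the $\mu$-terms explicitly and use their cancellation/sign, after which the error estimate reduces to the $\mu$-free bounds on $D_3$ and $I_1$ exactly as in the paper.
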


\begin{proof}
Denote
\begin{equation}\label{notation of frequency}
\begin{cases}
D_1(z_0,r)=\int_{B_r(z_0)}|D\widetilde{u}|^2dz;\quad D_2(z_0,r)=\int_{B_r(z_0)}|
D\widetilde{v}|^2dz;
\\D_3(z_0,r)=\int_{B_r(z_0)}\widetilde{u}\widetilde{v}dz;\quad
D_4(z_0,r)=\mu\int_{B_r(z_0)}\widetilde{v}^2dz;\\
H_1(z_0,r)=\int_{\partial B_r(z_0)}\widetilde{u}^2d\sigma,\quad H_2(z_0,r)=\int_{\partial B_r(z_0)}\widetilde{v}^2d\sigma;\\
D(z_0,r)=D_1(z_0,r)+D_2(z_0,r)+D_3(z_0,r)+D_4(z_0,r);\\
H(z_0,r)=H_1(z_0,r)+H_2(z_0,r).
\end{cases}
\end{equation}
Then $$N(z_0,r)=r\frac{D(z_0,r)}{H(z_0,r)}.$$
For $H(z_0,r)$, through the direct calculation, we have
\begin{equation}\label{derivative of H}
\begin{cases}
H_1'(z_0,r)=\frac{n-1}{r}H_1(z_0,r)+\frac{2}{r}\int_{\partial B_r(z_0)}\widetilde{u}\widetilde{u}_{\nu}d\sigma;\\
H_2'(z_0,r)=\frac{n-1}{r}H_2(z_0,r)+\frac{2}{r}\int_{\partial B_r(z_0)}\widetilde{v}\widetilde{v}_{\nu}d\sigma.
\end{cases}
\end{equation}

For $D'(z_0,r)$, we have
\begin{eqnarray}\label{derivative of D1}
D_1'(z_0,r)\nonumber&=&\int_{\partial B_r(z_0)}|D\widetilde{u}|^2d\sigma=\frac{1}{r}\int_{B_r(z_0)}div(|D\widetilde{u}|^2\cdot z)dz
\\\nonumber&=&\frac{n}{r}D_1(z_0,r)
+\frac{2}{r}\int_{B_r(z_0)}\widetilde{u}_i\cdot\widetilde{u}_{ij}\cdot z_jdz
\\\nonumber&=&\frac{n-2}{r}D_1(z_0,r)+\frac{2}{r}\int_{\partial B_r(z_0)}\widetilde{u}^2_{\nu}d\sigma
-\frac{2}{r}\int_{B_r(z_0)}\widetilde{v}D\widetilde{u}\cdot zdz
\\&=&\frac{n-2}{r}D_1(z_0,r)+\frac{2}{r}\int_{\partial B_r(z_0)}\widetilde{u}^2_{\nu}d\sigma-I_1,
\end{eqnarray}
with  $I_1=\frac{2}{r}\int_{B_r(z_0)}\widetilde{v}D\widetilde{u}\cdot zdz$,
\begin{equation}\label{derivative of D2}
D_2'(z_0,r)
=\frac{n-2}{r}D_2(z_0,r)+\frac{2}{r}\int_{\partial B_r(z_0)}\widetilde{v}^2_{\nu}d\sigma
-I_2,
\end{equation}
with
$I_2=\frac{2\mu}{r}\int_{B_r(z_0)}\widetilde{v}D\widetilde{v}\cdot zdz$,
\begin{eqnarray}\label{derivative of D3}
|D_3'(z_0,r)|&=&\left|\int_{\partial B_r(z_0)}\widetilde{u}\widetilde{v}d\sigma\right|
\leq\frac{1}{2}\left(\int_{\partial B_r(z_0)}\widetilde{u}^2d\sigma+\int_{\partial B_r(z_0)}\widetilde{v}^2\sigma\right)
\leq\frac{1}{2}H(z_0,r),
\end{eqnarray}
and
\begin{eqnarray}\label{derivative of D4}
D_4'(z_0,r)\nonumber&=&\mu\int_{\partial B_r(z_0)}\widetilde{v}^2d\sigma
=\frac{\mu}{r}\int_{B_r(z_0)}div(\widetilde{v}^2\cdot z)dz
\\\nonumber&=&\frac{n}{r}D_4(z_0,r)+\frac{2\mu}{r}\int_{B_r(z_0)}\widetilde{v}D\widetilde{v}\cdot zdz
\\&=&\frac{n}{r}D_4(z_0,r)+I_2.
\end{eqnarray}

Now we give an estimate of $\int_{B_r(z_0)}\widetilde{u}^2dz$ and $\int_{B_r(z_0)}\widetilde{v}^2dz$ below.
Let $\widetilde{u}=\widetilde{u}_1+\widetilde{u}_2$ such that $\widetilde{u}_1$ is a harmonic function with $\widetilde{u}_1=\widetilde{u}$ on $\partial B_r(z_0)$. Then, by Corollary $2.2.7$ in $\cite{Han and Lin book}$, we have 
\begin{equation}\label{from Han Lin book}
\int_{B_r(z_0)}\widetilde{u}_1^2dz\leq \frac{r}{n}\int_{\partial B_r(z_0)}\widetilde{u}_1^2d\sigma=\frac{r}{n}\int_{\partial B_r(z_0)}\widetilde{u}^2dz.
\end{equation}
Since $\widetilde{u}_2=\widetilde{u}-\widetilde{u}_1\in W_0^{1,2}(B_r(z_0))$, from the Poincare's inequality,
\begin{equation*}
\int_{B_r(z_0)}\widetilde{u}_2^2dz\leq Cr^2\int_{B_r(z_0)}|D\widetilde{u}_2|^2dz\leq Cr^2\int_{B_r(z_0)}|D\widetilde{u}|^2dz.
\end{equation*}
So
\begin{eqnarray}\label{estimate of u^2}
\int_{B_r(z_0)}\widetilde{u}^2dz&\leq&2\int_{B_r(z_0)}(\widetilde{u}_1^2+\widetilde{u}_2^2)dz\\\nonumber
&\leq& Cr\int_{\partial B_r(z_0)}\widetilde{u}^2d\sigma+Cr^2\int_{B_r(z_0)}|D\widetilde{u}|^2dz.
\end{eqnarray}
By the similar argument to $\widetilde{v}$, we also have
\begin{eqnarray}\label{estimate of v^2}
\int_{B_r(z_0)}\widetilde{v}^2dz&\leq&2\int_{B_r(z_0)}(\widetilde{v}_1^2+\widetilde{v}_2^2)dz\\\nonumber
&\leq& Cr\int_{\partial B_r(z_0)}\widetilde{v}^2d\sigma+Cr^2\int_{B_r(z_0)}|D\widetilde{v}|^2dz.
\end{eqnarray}
Thus
\begin{eqnarray}\label{int u^2 and v^2 by D and H}
\int_{B_r(z_0)}(\widetilde{u}^2+\widetilde{v}^2)dz&\leq& C\left(r\int_{\partial B_r(z_0)}(\widetilde{u}^2+\widetilde{v}^2)d\sigma+r^2\int_{B_r(z_0)}(|D\widetilde{u}|^2+|D\widetilde{v}|^2)dz\right)\nonumber
\\&=&Cr^2(D_1(z_0,r)+D_2(z_0,r))+CrH(z_0,r).
\end{eqnarray}

From $(\ref{derivative of D1})$ and $(\ref{int u^2 and v^2 by D and H})$, we have
\begin{eqnarray}\label{estimate of I1}
|I_1|\nonumber&=&\frac{2}{r}\left|\int_{B_r(z_0)}\widetilde{v}D\widetilde{u}\cdot zdz\right|
\leq2\int_{B_r(z_0)}|\widetilde{v}||D\widetilde{u}|dz
\\\nonumber&\leq&\frac{1}{r}\int_{B_r(z_0)}\widetilde{v}^2dz+r\int_{B_r(z_0)}|D\widetilde{u}|^2dz
\\&\leq&Cr\left(D_1(z_0,r)+D_2(z_0,r)\right)+CH(z_0,r).
\end{eqnarray}
So from $(\ref{derivative of D1}-\ref{estimate of I1})$, there holds
\begin{eqnarray}\label{D'}
D'(z_0,r)&=&\frac{n-2}{r}\left(D_1(z_0,r)+D_2(z_0,r)+D_4(z_0,r)\right)+D_3'(z_0,r)\\\nonumber&+&\frac{2}{r}D_4(z_0,r)+\frac{2}{r}\left(\int_{\partial B_r(z_0)}\widetilde{u}\widetilde{u}_{\nu}d\sigma+\int_{\partial B_r(z_0)}\widetilde{v}\widetilde{v}_{\nu}d\sigma\right)-I_1
\\\nonumber&\geq&\frac{n-2}{r}D(z_0,r)+\frac{2}{r}\int_{\partial B_r(z_0)}\left(\widetilde{u}_{\nu}^2+\widetilde{v}_{\nu}^2\right)d\sigma\\\nonumber&-&Cr(D_1(z_0,r)+D_2(z_0,r))-CH(z_0,r)-\frac{n-2}{r}|D_3(z_0,r)|.
\end{eqnarray}

Next, we need to estimate the upper bound for the term $|D_3(z_0,r)|$. In fact, from $(\ref{estimate of u^2})$ and $(\ref{estimate of v^2})$,
\begin{eqnarray}\label{form of D3}
|D_3(z_0,r)|&=&\nonumber\left|\int_{B_r(z_0)}\widetilde{u}\widetilde{v}dz\right|
\\\nonumber&\leq&\frac{1}{2}\left(\int_{B_r(z_0)}\widetilde{u}^2+\int_{B_r(z_0)}\widetilde{v}^2dz\right)
\\&\leq&Cr^2(D_1(z_0,r)+D_2(z_0,r))+CrH(z_0,r).
\end{eqnarray}
For $N(z_0,r)\geq C_0$, 
\begin{equation}\label{estimate of H by D}
H(z_0,r)\leq \frac{r}{C_0}D(z_0,r).
\end{equation}
Thus from $(\ref{form of D3})$ and $(\ref{estimate of H by D})$,
\begin{equation*}
|D_3(z_0,r)|\leq Cr^2(D_1(z_0,r)+D_2(z_0,r))+\frac{C}{C_0}r^2D(z_0,r),
\end{equation*}
provided that $N(z_0,r)\geq C_0$. Then for any $r\leq r_0$ with $r_0$ small enough such that $\frac{C}{C_0}r^2<\frac{1}{2}$, 
\begin{equation*}
|D_3(z_0,r)|\leq Cr^2(D(z_0,r)+|D_3(z_0,r)|)\leq Cr^2D(z_0,r)+\frac{1}{2}|D_3(z_0,r)|,
\end{equation*}
which implies that
\begin{equation}\label{estimate of D3 by D}
|D_3(z_0,r)|\leq Cr^2D(z_0,r).
\end{equation}
By putting $(\ref{estimate of D3 by D})$ into $(\ref{D'})$, we have
\begin{equation}
\frac{D'(z_0,r)}{D(z_0,r)}\geq \frac{n-2}{r}+\frac{2}{r}\frac{\int_{\partial B_r(z_0)}(\widetilde{u}^2_{\nu}+\widetilde{v}^2_{\nu})d\sigma}{\int_{\partial B_r(z_0)}(\widetilde{u}\widetilde{u}_{\nu}+\widetilde{v}\widetilde{v}_{\nu})d\sigma}-Cr.
\end{equation}
From the Cauchy inequality, there holds
\begin{eqnarray}
\int_{\partial B_r(z_0)}(\widetilde{u}\widetilde{u}_{\nu}+\widetilde{v}\widetilde{v}_{\nu})d\sigma
\leq\left(\int_{\partial B_r(z_0)}(\widetilde{u}^2+\widetilde{v}^2)d\sigma\right)^{\frac{1}{2}}\left(\int_{\partial B_r(z_0)}(\widetilde{u}_{\nu}^2+\widetilde{v}^2_{\nu})d\sigma\right)^{\frac{1}{2}}.
\end{eqnarray}
So
\begin{equation}
\left(\frac{\int_{\partial B_r(z_0)}(\widetilde{u}^2_{\nu}+\widetilde{v}^2_{\nu})d\sigma}{\int_{\partial B_r(z_0)}(\widetilde{u}\widetilde{u}_{\nu}+\widetilde{v}\widetilde{v}_{\nu})d\sigma}
-\frac{\int_{\partial B_r(z_0)}(\widetilde{u}\widetilde{u}_{\nu}+\widetilde{v}\widetilde{v}_{\nu})d\sigma}{\int_{\partial B_r(z_0)}(\widetilde{u}^2+\widetilde{v}^2)d\sigma}\right)
\geq0.
\end{equation}
Then from the derivative of $H(z_0,r)$, and the direct calculation of $N'(z_0,r)$,
\begin{eqnarray}
\frac{N'(z_0,r)}{N(z_0,r)}&=& \frac{1}{r}+\frac{D'(z_0,r)}{D(z_0,r)}-\frac{H'(z_0,r)}{H(z_0,r)}\nonumber
\\&\geq&\frac{2}{r}\left(\frac{\int_{\partial B_r(z_0)}(\widetilde{u}^2_{\nu}+\widetilde{v}^2_{\nu})d\sigma}{\int_{\partial B_r(z_0)}(\widetilde{u}\widetilde{u}_{\nu}+\widetilde{v}\widetilde{v}_{\nu})d\sigma}
-\frac{\int_{\partial B_r(z_0)}(\widetilde{u}\widetilde{u}_{\nu}+\widetilde{v}\widetilde{v}_{\nu})d\sigma}{\int_{\partial B_r(z_0)}(\widetilde{u}^2+\widetilde{v}^2)d\sigma}\right)-Cr\nonumber
\\&\geq&-Cr,
\end{eqnarray}
which is the desired result. 
\end{proof}

Such frequency functions also have a lower bound as follows.

\begin{lemma}
There exists positive constant $r_0'$ depending only on $n$ and $\Omega$, such that if $r\leq r_0'$, then
\begin{equation}
N(z_0,r)\geq -Cr^2.
\end{equation}
Here $C$ is a positive constant depending only on $n$ and $\Omega$.
\end{lemma}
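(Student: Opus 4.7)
The plan is to observe that of the four pieces making up $D(z_0,r) = D_1+D_2+D_3+D_4$, only the cross-term $D_3(z_0,r) = \int_{B_r(z_0)} \widetilde{u}\widetilde{v}\, dz$ can be negative, since $D_1, D_2$ are Dirichlet energies and $D_4 = \mu\int \widetilde{v}^2$ with $\mu\geq 0$. So the whole statement $N(z_0,r)\geq -Cr^2$, which is equivalent to $D(z_0,r)\geq -Cr H(z_0,r)$, will follow as soon as we show a lower bound on $D_3$ of the form $D_3(z_0,r)\geq -Cr^2(D_1+D_2) - Cr H$.

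First, I would reuse the two Poincar\'e-type inequalities $(\ref{estimate of u^2})$ and $(\ref{estimate of v^2})$ established inside the proof of Lemma $\ref{monotonicity formula}$, namely
\begin{equation*}
\int_{B_r(z_0)}\widetilde{u}^2\,dz \leq Cr\,H_1(z_0,r) + Cr^2 D_1(z_0,r),\qquad \int_{B_r(z_0)}\widetilde{v}^2\,dz \leq Cr\,H_2(z_0,r) + Cr^2 D_2(z_0,r).
\end{equation*}
Applying Cauchy--Schwarz and then Young's inequality to $D_3$ gives
\begin{equation*}
|D_3(z_0,r)| \leq \tfrac{1}{2}\int_{B_r(z_0)}\widetilde{u}^2\,dz + \tfrac{1}{2}\int_{B_r(z_0)}\widetilde{v}^2\,dz \leq Cr^2\bigl(D_1(z_0,r)+D_2(z_0,r)\bigr) + Cr\,H(z_0,r),
\end{equation*}
which is exactly $(\ref{form of D3})$ in the previous proof, so I can cite it rather than rederive it.

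Next, using $D_1, D_2, D_4\geq 0$ and inserting the bound on $D_3$, I get
\begin{equation*}
D(z_0,r) \geq D_1 + D_2 + D_4 - |D_3(z_0,r)| \geq (1-Cr^2)\bigl(D_1(z_0,r)+D_2(z_0,r)\bigr) + D_4(z_0,r) - Cr\,H(z_0,r).
\end{equation*}
Choosing $r_0'$ so small that $Cr_0'^2 \leq 1$, the first two terms on the right are nonnegative for every $r\leq r_0'$, and therefore
\begin{equation*}
D(z_0,r) \geq -Cr\,H(z_0,r).
\end{equation*}
Dividing by $H(z_0,r)>0$ (which holds for small $r$ unless $\widetilde{u}\equiv\widetilde{v}\equiv 0$ near $z_0$, in which case the claim is trivial by the definition of $N$) and multiplying by $r$ yields $N(z_0,r) = r\,D(z_0,r)/H(z_0,r) \geq -Cr^2$, as required.

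There is no serious obstacle here: the whole argument is an elementary rearrangement of inequalities that were already put in place for the almost-monotonicity formula. The only mild point to be careful about is the choice of $r_0'$ guaranteeing $Cr^2\leq 1$ so that the coefficient of $D_1+D_2$ stays nonnegative, and noting that $r_0'$ depends only on the constant $C$ appearing in $(\ref{estimate of u^2})$ and $(\ref{estimate of v^2})$, hence only on $n$ and $\Omega$.
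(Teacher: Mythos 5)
Your proposal is correct and follows essentially the same approach as the paper: both isolate $D_3$ as the only potentially negative term, bound $|D_3|$ via the Poincar\'e-type inequalities $(\ref{estimate of u^2})$, $(\ref{estimate of v^2})$ to get $|D_3|\leq Cr^2(D_1+D_2)+CrH$, and then choose $r_0'$ small enough that $1-Cr^2\geq 0$, yielding $N(z_0,r)\geq -Cr^2$.
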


\begin{proof}
From $(\ref{notation of frequency})$, we only need to estimate $D_3(z_0,r)$, since other terms are all positive. From the H\"older inequality and the inequalities $(\ref{estimate of u^2})$ and $\ref{estimate of v^2})$.

\begin{eqnarray}
|D_3(z_0,r)|&\leq&\left(\int_{B_r(z_0)}\widetilde{u}^2dz\right)^{1/2}\left(\int_{B_r(z_0)}\widetilde{v}^2dz\right)^{1/2}\nonumber
\\&\leq&\frac{1}{2}\left(\int_{B_r(z_0)}\widetilde{u}^2dz+\int_{B_r(z_0)}\widetilde{v}^2dz\right)\nonumber
\\&\leq&Cr\left(\int_{\partial B_r(z_0)}\widetilde{u}^2d\sigma+\int_{\partial B_r(z_0)}\widetilde{v}^2d\sigma\right)\nonumber
+Cr^2\left(\int_{B_r(z_0)}|D\widetilde{u}|^2dz+\int_{B_r(z_0)}|D\widetilde{v}|^2dz\right)\nonumber
\\&=&Cr\left(H_1(z_0,r)+H_2(z_0,r)\right)+Cr^2\left(D_1(z_0,r)+D_2(z_0,r)\right).
\end{eqnarray}
Here $C$ is a positive constant depending only on $n$. Thus
\begin{eqnarray}
N(z_0,r)&\geq&r\frac{D_1(z_0,r)+D_2(z_0,r)-|D_3(z_0,r)|+D_4(z_0,r)}{H_1(z_0,r)+H_2(z_0,r)}\nonumber
\\&\geq&r\frac{(1-Cr^2)(D_1(z_0,r)+D_2(z_0,r))-Cr(H_1(z_0,r)+H_2(z_0,r))}{H_1(z_0,r)+H_2(z_0,r)}\nonumber
\\&\geq&-Cr^2,
\end{eqnarray}
provided that $r>0$ is small enough such that $1-Cr^2\geq0$.
This completes the proof.
\end{proof}

We can get the following doubling conditions from Lemma $\ref{monotonicity formula}$ and the ``almost monotonicity formula''.
\begin{lemma}\label{doubling condition}
Let $r_0$ be the same positive constant as in Lemma $\ref{monotonicity formula}$. For $z_0=(x_0,0)$ and $r<r_0$, it holds that
\begin{equation}
\begin{cases}
\fint_{\partial B_r(z_0)}(\widetilde{u}^2+\widetilde{v}^2)d\sigma\leq2^{C(N(z_0,r)+1)}\fint_{\partial B_{r/2}(z_0)}(\widetilde{u}^2+\widetilde{v}^2)d\sigma,\\
\fint_{ B_r(z_0)}(\widetilde{u}^2+\widetilde{v}^2)dz\leq2^{C(N(z_0,r)+1)}\fint_{ B_{r/2}(z_0)}(\widetilde{u}^2+\widetilde{v}^2)dz,\\
\fint_{\partial B_r(z_0)}(\widetilde{u}^2+\widetilde{v}^2)d\sigma\geq2^{CN(z_0,r/2)-C'}\fint_{\partial B_{r/2}(z_0)}(\widetilde{u}^2+\widetilde{v}^2)d\sigma,\\
\fint_{ B_r(z_0)}(\widetilde{u}^2+\widetilde{v}^2)dz\geq2^{CN(z_0,r)-C'}\fint_{ B_{r/2}(z_0)}(\widetilde{u}^2+\widetilde{v}^2)dz,
\end{cases}
\end{equation}
where $C$ and $C'$ in different forms are different positive constants depending only on $n$. 
\end{lemma}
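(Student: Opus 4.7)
The plan is to derive all four inequalities from a single identity for the logarithmic derivative of the sphere mass $H(z_0,r)$, then use the almost monotonicity of $N$ from Lemma \ref{monotonicity formula} to control $N(z_0,s)$ on the interval $s\in[r/2,r]$. From (\ref{derivative of H}) together with the alternative representation of the frequency in Definition \ref{definition of frequency and doubling index}, one obtains the clean identity
\begin{equation*}
\frac{H'(z_0,r)}{H(z_0,r)}=\frac{n-1}{r}+\frac{2N(z_0,r)}{r},
\end{equation*}
so integrating from $r/2$ to $r$ yields
\begin{equation*}
\log\frac{H(z_0,r)}{H(z_0,r/2)}=(n-1)\log 2+\int_{r/2}^{r}\frac{2N(z_0,s)}{s}\,ds.
\end{equation*}
All four inequalities will come from plugging in appropriate one-sided bounds on $N(z_0,s)$ inside this integral, and the factor $2^{n-1}$ will be absorbed into the constant once we pass to the surface averages $\fint_{\partial B_r}$.

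For the upper bounds, I would distinguish two cases. When $N(z_0,r)\ge C_0$, Lemma \ref{monotonicity formula} implies that $N(z_0,s)e^{Cs^{2}/2}$ is monotone, and hence $N(z_0,s)\le e^{Cr^{2}/2}N(z_0,r)\le C'(N(z_0,r)+1)$ for $s\in[r/2,r]$, provided $r<r_0$. Inserting this into the integral gives $\int_{r/2}^{r}\frac{2N(z_0,s)}{s}\,ds\le C(N(z_0,r)+1)\log 2$, which yields the first claimed doubling inequality on spheres. When $N(z_0,r)<C_0$, the preceding lower-bound lemma $N\ge -Cr^{2}$ makes the integral bounded by an absolute constant, which is in turn bounded by $C(N(z_0,r)+1)\log 2$ upon enlarging $C$, so the same inequality holds.

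For the lower bounds, I would use the almost monotonicity in the opposite direction: for $s\in[r/2,r]$, $N(z_0,s)e^{Cs^{2}/2}\ge N(z_0,r/2)e^{Cr^{2}/8}$, so $N(z_0,s)\ge e^{-Cr^{2}/2}N(z_0,r/2)\ge cN(z_0,r/2)-C'$ for $r<r_0$. Integrating yields $\int_{r/2}^{r}\frac{2N(z_0,s)}{s}\,ds\ge (CN(z_0,r/2)-C')\log 2$, which after subtracting the $(n-1)\log 2$ contribution and passing to averages gives the third inequality. If $N(z_0,r/2)<C_0$, the right-hand side $2^{CN(z_0,r/2)-C'}$ is a small constant that is automatically dominated by $H(z_0,r)/H(z_0,r/2)$ since $H$ is nondecreasing up to a factor controlled by the lower bound $N\ge -Cr^{2}$.

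To pass from the sphere inequalities to the ball inequalities, I would write $\int_{B_r(z_0)}(\widetilde{u}^{2}+\widetilde{v}^{2})\,dz=\int_{0}^{r}H(z_0,s)\,ds$ and apply the sphere doubling iteratively on a dyadic sequence $s_j=r/2^{j}$, which shows that $H$ grows geometrically in $r$, so $\int_{0}^{r}H\,ds$ is comparable to $rH(z_0,r)$ up to the same multiplicative factor $2^{C(N(z_0,r)+1)}$; dividing by $|B_r|$ versus $|B_{r/2}|$ then yields the averaged ball version (and analogously for the lower bound). The main obstacle I expect is the careful case analysis at the threshold $N(z_0,r)\sim C_0$: Lemma \ref{monotonicity formula} only gives almost monotonicity above this threshold, so I must verify that when $N$ is small the inequalities reduce to trivial statements provided by the lower bound $N\ge -Cr^{2}$, and that the constants $C,C'$ can be chosen uniformly across both regimes.
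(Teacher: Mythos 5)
Your proposal follows essentially the same route as the paper: you use the identity $H'/H=\tfrac{n-1}{r}+\tfrac{2N}{r}$, integrate over $[r/2,r]$, and invoke the almost monotonicity of $N$ from Lemma \ref{monotonicity formula} to bound $N(z_0,s)$ from above (resp.\ below) in terms of $N(z_0,r)$ (resp.\ $N(z_0,r/2)$), then pass from sphere averages to ball averages by integrating in the radial variable. The only difference is that you make explicit the case distinction at the threshold $N\sim C_0$ and the use of the lower bound $N\ge -Cr^2$, which the paper compresses into the unproved assertion $N(z_0,\rho)\le C(N(z_0,r)+1)$; this is a welcome amplification rather than a different argument.
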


\begin{proof}
This is a direct result by taking integration on the quantity $\frac{N'(z_0,r)}{N(z_0,r)}$. From the calculation of $H'(z_0,r)$ in the proof of Lemma $\ref{monotonicity formula}$,
\begin{equation}\label{derivative of lnH}
\frac{d}{dr}\left(\log H(z_0,r)\right)=\frac{H'(z_0,r)}{H(z_0,r)}=\frac{n-1}{r}+2\frac{\int_{\partial B_r(z_0)}(\widetilde{u}\widetilde{u}_{\nu}+\widetilde{v}\widetilde{v}_{\nu})d\sigma}{\int_{\partial B_r(z_0)}(\widetilde{u}^2+\widetilde{v}^2)d\sigma}
=\frac{n-1}{r}+2\frac{N(z_0,r)}{r}.
\end{equation}
Thus
\begin{equation}
\ln\frac{H(z_0,r)}{H\left(z_0,\frac{r}{2}\right)}=\int_{\frac{r}{2}}^r\frac{H'(z_0,\rho)}{H(z_0,\rho)}d\rho
=\int_{\frac{r}{2}}^r\frac{n-1+2N(z_0,\rho)}{\rho}d\rho.
\end{equation}
From the monotonicity formula, we know that for any $\rho<r$,
\begin{equation}\label{same center control}
N(z_0,\rho)\leq C(N(z_0,r)+1),
\end{equation}
for some $C>0$ depending only on the dimension $n$. Then
\begin{equation*}
\ln\frac{H(z_0,r)}{H\left(z_0,\frac{r}{2}\right)}\leq C(N(z_0,r)+1),
\end{equation*}
and then
\begin{equation}
H(z_0,r)\leq 2^{C(N(z_0,r)+1)}H\left(z_0,\frac{r}{2}\right),
\end{equation}
where $C$ is a positive constant depending only on $n$. This is the first inequality of this lemma. The second inequality of $(\ref{doubling condition})$ can be obtained by the first one. 

Now we prove the third and the fourth inequalities. In fact, from the monotonicity formula, for any $\rho\in(r/2,r)$, we have
\begin{equation*}
N(z_0,\rho)\geq CN(z_0,r/2)-C'.
\end{equation*}
Thus
\begin{eqnarray*}
\ln\frac{H(z_0,r)}{H\left(z_0,\frac{r}{2}\right)}&=&\int_{\frac{r}{2}}^r\frac{n-1+2N(z_0,\rho)}{\rho}d\rho\geq\int_{\frac{r}{2}}^r\frac{n-1+CN(z_0,r/2)-C'}{\rho}d\rho
\\&\geq&CN(z_0,r/2)-C'.
\end{eqnarray*}
Then
\begin{equation}
H(z_0,r)\geq 2^{CN(z_0,r/2)-C'}H\left(z_0,\frac{r}{2}\right),
\end{equation}
where $C$ and $C'$ are positive constants depending only on $n$. This is the third inequality. The fourth one can be derived by integrating the third one.
\end{proof}

\begin{remark}\label{doubling condition of ball}
By the similar arguments as in the proof of Lemma $\ref{doubling condition}$, we have for $0<r_1<r_2\leq r_0$,
\begin{equation}
\begin{cases}
\fint_{B_{r_2}(z_0)}(\widetilde{u}^2+\widetilde{v}^2)dz\leq\left(\frac{r_2}{r_1}\right)^{C(N(z_0,r)+1)}\fint_{B_{r_1}(z_0)}(\widetilde{u}^2+\widetilde{v}^2)dz,\\
\fint_{B_{r_2}(z_0)}(\widetilde{u}^2+\widetilde{v}^2)dz\geq\left(\frac{r_2}{r_1}\right)^{CN(z_0,r/2)-C'}\fint_{B_{r_1}(z_0)}(\widetilde{u}^2+\widetilde{v}^2)dz.
\end{cases}
\end{equation}
\end{remark}



Now we can establish the  ``changing center property''.
\begin{lemma}\label{changing center}
Let $z_1\in B_{r/4}(z_0)$ with $z_1=(x_1,0)$ and $x_1\in\Omega$. Then for $\rho\leq r/4$, we have
\begin{equation}
N(z_1,\rho)\leq C(N(z_0,r)+1),
\end{equation}
 where $C$ is a positive constant depending only on $n$.
\end{lemma}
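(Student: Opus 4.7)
The plan is to combine the almost monotonicity formula (Lemma~\ref{monotonicity formula}), the two-sided doubling estimates (Lemma~\ref{doubling condition} and Remark~\ref{doubling condition of ball}), and elementary ball inclusions coming from the triangle inequality. First I would reduce the statement to a bound on $N(z_1,r/4)$. Integrating the inequality $N'(z_1,s)/N(z_1,s)\ge -Cs$ between $s=\rho$ and $s=r/4$ gives
\[
N(z_1,\rho)\ \le\ e^{Cr^2/32}\,N(z_1,r/4)\ \le\ 2\,N(z_1,r/4)
\]
provided $r\le r_0$ is small enough and $N(z_1,\cdot)\ge C_0$ on $[\rho,r/4]$; otherwise $N(z_1,\rho)<C_0$ and the conclusion is automatic. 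Hence it is enough to prove $N(z_1,r/4)\le C(N(z_0,r)+1)$.

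Next, by Lemma~\ref{doubling condition} applied at $z_1$ with radius $r/4$, one has the equivalence
\[
N(z_1,r/4)\ \le\ C\log_2\frac{I(z_1,r/4)}{I(z_1,r/8)}+C',\qquad I(z,s):=\int_{B_s(z)}(\widetilde u^2+\widetilde v^2)\,dz,
\]
up to the harmless change of normalization between averages and integrals. Since $|z_1-z_0|\le r/4$, the triangle inequality gives $B_{r/4}(z_1)\subset B_{r/2}(z_0)\subset B_r(z_0)$, so $I(z_1,r/4)\le I(z_0,r)$. The task thus reduces to producing a matching lower bound for $I(z_1,r/8)$ in terms of $I(z_0,r)$ and $N(z_0,r)$.

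Because $|z_1-z_0|$ may be as large as $r/4$, no positive-radius ball around $z_0$ fits inside $B_{r/8}(z_1)$, so direct inclusion fails; this is the main obstacle. I plan to overcome it with a chain $z_0=w_0,w_1,\dots,w_m=z_1$ of $m=O(1)$ points along the segment $[z_0,z_1]$ with consecutive distance less than $r/32$. For each $i$, the inclusion $B_{r/32}(w_{i+1})\subset B_{r/16}(w_i)$ combined with the doubling inequality at $w_i$ between radii $r/16$ and $r/32$ (Remark~\ref{doubling condition of ball}) yields $I(w_{i+1},r/32)\le 2^{C(N(w_i,r/16)+1)}I(w_i,r/32)$. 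Chaining these transitions and using iterated doubling at $z_0$ to connect $I(z_0,r)$ down to $I(z_0,r/32)$ gives
\[
I(z_1,r/8)\ \ge\ I(z_1,r/32)\ \ge\ 2^{-C(N(z_0,r)+\sum_i N(w_i,r/16)+1)}\,I(z_0,r).
\]
The intermediate quantities $N(w_i,r/16)$ are themselves controlled by the very statement we are proving at a coarser scale, and this circularity is broken by induction on scales (or, equivalently, by choosing the chain fine enough that the accumulated exponential factor is absorbed into the final constant $C$). Assembling these estimates yields $\log_2(I(z_1,r/4)/I(z_1,r/8))\le C(N(z_0,r)+1)$, completing the proof.
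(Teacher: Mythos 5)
Your first reduction to bounding $N(z_1,r/4)$ via almost-monotonicity is sound, and the upper inclusion $B_{r/4}(z_1)\subset B_r(z_0)$ is correct, but the argument breaks down at the point you yourself flag. Lowering the center from $z_0$ to $z_1$ at the small radius $r/8$ via a chain is genuinely circular: each transition $I(w_{i+1},r/32)\le 2^{C(N(w_i,r/16)+1)}\,I(w_i,r/32)$ introduces the quantities $N(w_i,r/16)$, which are exactly what the lemma is supposed to control, and neither ``induction on scales'' (not fleshed out) nor ``choosing the chain fine enough'' resolves this --- a finer chain only multiplies the number of unknown factors. As written, the proof does not close.

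The paper avoids the obstacle altogether by working with a \emph{different pair of radii around $z_1$}, both larger than $|z_1-z_0|$. Instead of comparing $B_{r/4}(z_1)$ to $B_{r/8}(z_1)$, it bounds $N(z_1,\rho)$ (with $\rho=r/4$) by the ratio of surface averages over $\partial B_{3\rho/2}(z_1)$ and $\partial B_{5\rho/4}(z_1)$. The crucial point is that $5\rho/4 \ge |z_1-z_0|+\rho/4$, so the inclusion
\[
B_{\rho/4}(z_0)\subseteq B_{5\rho/4}(z_1)
\]
holds directly, giving a lower bound for the denominator in terms of $\fint_{B_{\rho/4}(z_0)}(\widetilde u^2+\widetilde v^2)$. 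On the other side, $B_{2\rho}(z_1)\subseteq B_r(z_0)$ gives the upper bound on the numerator. Doubling at $z_0$ (Lemma~\ref{doubling condition}) then compares $\fint_{B_r(z_0)}$ with $\fint_{B_{\rho/4}(z_0)}$ and produces the single factor $2^{C(N(z_0,r)+1)}$, with no intermediate frequencies appearing at all. In short: make the inner radius around $z_1$ large enough to swallow a ball around $z_0$, rather than trying to propagate a lower bound through a chain. You should revise the proof to choose the inner radius $>\,\rho$, which eliminates the need for chaining and removes the circularity.
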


\begin{proof}
From $(\ref{derivative of lnH})$, for  $\rho=\frac{r}{4}$ and any $t\in\left(\frac{3\rho}{2},2\rho\right)$, we have
\begin{equation}\label{lower control of H}
\ln\frac{\fint_{\partial B_t(z_1)}(\widetilde{u}^2+\widetilde{v}^2)d\sigma}{\fint_{\partial B_{3\rho/2}(z_1)}(\widetilde{u}^2+\widetilde{v}^2)d\sigma}
=\int_{3\rho/2}^{t}\frac{2N(z_1,l)}{l}dl
\geq-C\ln\frac{2t}{3\rho}\geq-C,
\end{equation}
which implies that
\begin{eqnarray*}
\fint_{\partial B_{3\rho/2}(z_1)}(\widetilde{u}^2+\widetilde{v}^2)d\sigma\leq
C\fint_{\partial B_t(z_1)}(\widetilde{u}^2+\widetilde{v}^2)d\sigma.
\end{eqnarray*}
for any $t\in \left(\frac{3\rho}{2},2\rho\right)$. Then
\begin{equation}\label{upper bound of fenzi}
\fint_{\partial B_{3\rho/2}(z_1)}(\widetilde{u}^2+\widetilde{v}^2)d\sigma\leq
C\fint_{B_{2\rho}(z_1)\setminus B_{3\rho/2}(z_1)}(\widetilde{u}^2+\widetilde{v}^2)dz
\leq C\fint_{B_r(z_0)}(\widetilde{u}^2+\widetilde{v}^2)dz.
\end{equation}
Here we have used the fact that $B_{2\rho}(z_1)\subseteq B_r(z_0)$. By the similar argument as in the proof of $(\ref{lower control of H})$, we have
\begin{equation*}
\ln\frac{\fint_{\partial B_{5\rho/4}(z_1)}\left(\widetilde{u}^2+\widetilde{v}^2\right)d\sigma}{\fint_{\partial B_{t}(z_1)}\left(\widetilde{u}^2+\widetilde{v}^2\right)d\sigma}
\geq-C,
\end{equation*}
for any $t\in\left(0,\frac{5\rho}{4}\right)$. Then because $B_{\rho/4}(z_0)\subseteq B_{5\rho/4}(z_1)$,
\begin{eqnarray}\label{lower bound of fenmu}
\fint_{\partial B_{5\rho/4}(z_1)}\left(\widetilde{u}^2+\widetilde{v}^2\right)d\sigma
\geq\frac{1}{C}\fint_{B_{5\rho/4}(z_1)}\left(\widetilde{u}^2+\widetilde{v}^2\right)dz\geq\frac{1}{C}\fint_{B_{\rho/4}(z_0)}\left(\widetilde{u}^2+\widetilde{v}^2\right)dz.
\end{eqnarray}
From Lemma $\ref{doubling condition}$, we also have
\begin{eqnarray}\label{doubling of z0}
\fint_{B_{r}(z_0)}\left(\widetilde{u}^2+\widetilde{v}^2\right)dz\leq 2^{C(N(z_0,r)+1)}\fint_{B_{\rho/4}(z_0)}\left(\widetilde{u}^2+\widetilde{v}^2\right)dz.
\end{eqnarray}
So from $(\ref{upper bound of fenzi})$, $(\ref{lower bound of fenmu})$, and $(\ref{doubling of z0})$,
\begin{eqnarray*}
N(z_1,\rho)\leq C\ln\frac{\fint_{\partial B_{3\rho/2}(z_1)}(\widetilde{u}^2+\widetilde{v}^2)d\sigma}{\fint_{\partial B_{5\rho/4}(z_1)}\left(\widetilde{u}^2+\widetilde{v}^2\right)d\sigma}\leq C\ln\frac{C\fint_{B_r(z_0)}(\widetilde{u}^2+\widetilde{v}^2)dz}{C^{-1}\fint_{B_{\rho/4}(z_0)}\left(\widetilde{u}^2+\widetilde{v}^2\right)dz}
\leq C(N(z_0,r)+1).
\end{eqnarray*}
\end{proof}

From the above lemmas and Sobolev's Embedding Theorem, we can derive the relationship between the frequency function and the doubling index.
\begin{lemma}\label{relationship}
If $\mu>0$ large enough, there exist positive constants $C$, $c$, $\widetilde{C}$ and $\widetilde{c}$ depending only on $n$, such that for any $\eta\in(0,1/2)$,
\begin{equation}\label{N by M}
N(z_0,r)\leq cM(z_0,(\eta+1) r)+\widetilde{c}(1-\log_2\eta-\log_2r),
\end{equation}
and
\begin{equation}\label{M by N}
M(z_0,r)\leq CN(z_0,(\eta+1) r)+\widetilde{C}(1-\log_2\eta-\log_2r).
\end{equation}
with $z_0=(x_0,0)$.
\end{lemma}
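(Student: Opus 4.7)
The plan is to pass between the $L^2$-based frequency $N(z_0,r)$ and the $L^\infty$-based doubling index $M(z_0,r)$ by converting $L^\infty$ norms into $L^2$ averages on slightly larger balls via Sobolev embedding combined with the interior estimate of Remark \ref{interior remark}, and then comparing $L^2$ averages on different radii via the doubling inequalities of Lemma \ref{doubling condition} and Remark \ref{doubling condition of ball}. Throughout I will write $A(\rho)=\fint_{B_\rho(z_0)}(\widetilde u^2+\widetilde v^2)\,dz$ and $B(\rho)=\|\widetilde u\|_{L^\infty(B_\rho(z_0))}^2+\|\widetilde v\|_{L^\infty(B_\rho(z_0))}^2$. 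The trivial inequality $A(\rho)\leq B(\rho)$ holds, and iterating Remark \ref{interior remark} up to order $l=[\tfrac{n+1}{2}]+1$ followed by the Sobolev embedding $W^{l,2}\hookrightarrow L^\infty$ gives a reverse bound
\begin{equation*}
B(\rho)\leq \Phi(\Lambda,\eta,\rho)\,A((1+\eta)\rho),\qquad \Phi(\Lambda,\eta,\rho):=C\bigl(\Lambda^{1/4}+\tfrac{1}{\eta\rho}\bigr)^{2l}((1+\eta)\rho)^{n+1}.
\end{equation*}

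To establish $M(z_0,r)\leq CN(z_0,(\eta+1)r)+\widetilde C(1-\log_2\eta-\log_2 r)$, I would apply the reverse Sobolev estimate to the numerator of $2^{2M(z_0,r)}=B(r)/B(r/2)$ and use $B(r/2)\geq A(r/2)$ to bound the denominator from below, obtaining
\begin{equation*}
2^{2M(z_0,r)}\leq \Phi(\Lambda,\eta,r)\,\frac{A((\eta+1)r)}{A(r/2)}.
\end{equation*}
The ratio $A((\eta+1)r)/A(r/2)$ is then controlled by Remark \ref{doubling condition of ball} with $r_1=r/2$ and $r_2=(\eta+1)r$, giving a bound of the form $2^{C(N(z_0,(\eta+1)r)+1)}$. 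Taking $\log_2$ reduces the problem to simplifying $\log_2\Phi(\Lambda,\eta,r)$. The reverse direction $N(z_0,r)\leq cM(z_0,(\eta+1)r)+\widetilde c(1-\log_2\eta-\log_2 r)$ proceeds dually: the lower doubling estimate (the fourth inequality of Lemma \ref{doubling condition}, extended via Remark \ref{doubling condition of ball}) provides $\log_2(A(r)/A(r/2))\gtrsim N(z_0,r/2)-C'$, while $A(r)/A(r/2)$ is bounded above by using $A(r)\leq B(r)$ in the numerator and, in the denominator, $A(r/2)=A((1+\eta)\rho)\geq B(\rho)/\Phi$ with $\rho=r/(2(\eta+1))$; the ratio $B(r)/B(\rho)$ is then expanded into a short telescope of doubling indices $M$ at scales in $[\rho,r]$, each comparable to $M(z_0,(\eta+1)r)$ through the almost-monotonicity of $N$ (Lemma \ref{monotonicity formula}) transferred to $M$ by the forward estimate just proved.

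It remains to simplify the elliptic correction factor. A direct expansion gives
\begin{equation*}
\log_2\Phi(\Lambda,\eta,r)\leq C\bigl(\log_2\Lambda+1-\log_2\eta-\log_2 r\bigr),
\end{equation*}
in which the $-\log_2\eta-\log_2 r$ contribution reproduces the advertised error, and the residual $\log_2\Lambda$ is absorbed into the $N$-term via the ``$\mu$ large enough'' hypothesis: in that regime, the equation $\triangle\widetilde v=\mu\widetilde v$ combined with the lower doubling inequality forces $N(z_0,(\eta+1)r)$ to dominate $\log_2\Lambda$, at the cost of enlarging $C$ and $c$ (still depending only on $n$).

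The main obstacle I anticipate is precisely this absorption of $\log_2\Lambda$, since for small $N$ the inequality $N\gtrsim\log_2\Lambda$ is not a priori available; one has to propagate the scale-dependent lower bound on $N$ dictated by the eigenvalue $\Lambda$ up to the scale $(\eta+1)r$ using the almost-monotonicity of Lemma \ref{monotonicity formula}, which is where the quantitative assumption that $\mu$ exceed a threshold depending only on $n$ enters. A secondary technical point concerns the reverse direction: because $B(\rho)$ is not manifestly monotone in $\rho$, the telescoping step must be limited to a uniformly bounded number of halvings (at most $O(\log_2(1/\eta))$, which contributes an additional $-\log_2\eta$ to the error), and each intermediate $M$ is compared to $M(z_0,(\eta+1)r)$ via the already-established forward inequality after appropriately enlarging the multiplicative constants.
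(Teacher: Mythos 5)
Your overall strategy—trade $L^\infty$ for $L^2$ via Sobolev embedding, then apply the doubling inequalities—is the right one, but the specific elliptic estimate you invoke introduces a $\log_2\Lambda$ error that the lemma does not allow, and the proposed mechanism for absorbing it fails. Remark \ref{interior remark} concerns $\bar u(x,x_{n+1})=u(x)e^{\sqrt{\lambda/2}\,x_{n+1}}$, which solves $\triangle^2\bar u=\Lambda\bar u$; iterating it (or the analogous argument for $\widetilde u$) produces exactly the factor $\Phi(\Lambda,\eta,\rho)\sim\bigl(\Lambda^{1/4}+(\eta\rho)^{-1}\bigr)^{2l}$ you write, whose logarithm contributes $C\log_2\Lambda$. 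The lemma, however, claims an error of the form $\widetilde C(1-\log_2\eta-\log_2 r)$ with $\widetilde C$ depending only on $n$, so $\Lambda$ must not appear. Your claim that $N\bigl(z_0,(\eta+1)r\bigr)\gtrsim\log_2\Lambda$ for $\mu$ large is not true in general: at any $z_0$ where $\widetilde u(z_0)\neq0$ one has $\lim_{r\to0}N(z_0,r)=0$ (see the computation \eqref{estimate of k and l} in the proof of Theorem \ref{quantitative uniqueness}), and the almost-monotonicity of Lemma \ref{monotonicity formula} then keeps $N(z_0,\rho)$ comparably small on all of $(0,r_0)$. Neither the monotonicity nor the doubling inequalities furnish any lower bound for $N$ in terms of $\mu$ or $\Lambda$, so there is nothing to propagate.

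The paper sidesteps this entirely by never invoking the fourth-order equation for $\bar u$ here. It works with the second-order system \eqref{new equations with mu} for the pair $(\widetilde u,\widetilde v)$. Testing $\triangle\widetilde v=\mu\widetilde v$ against $\widetilde v\phi^2$ gives
\begin{equation*}
-\int|D\widetilde v|^2\phi^2\,dz-2\int\widetilde v\,\phi\,D\widetilde v\cdot D\phi\,dz=\mu\int\widetilde v^2\phi^2\,dz\geq0,
\end{equation*}
so the $\mu$-term has a \emph{favorable} sign and simply drops, yielding the $\mu$-free Caccioppoli bound $\|\widetilde v\|_{W^{1,2}(B_{(1-\eta)r})}\leq C(\eta r)^{-1}\|\widetilde v\|_{L^2(B_r)}$; together with $\|\widetilde u\|_{W^{1,2}(B_{(1-\eta)r})}\leq C\bigl((\eta r)^{-1}\|\widetilde u\|_{L^2(B_r)}+\|\widetilde v\|_{L^2(B_r)}\bigr)$ from $\triangle\widetilde u=\widetilde v$, and iterating through Sobolev embedding, one obtains an $L^\infty$-to-$L^2$ conversion factor $C(\eta r)^{-(n+2)/2}$ with no $\Lambda$ anywhere. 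That is what makes the advertised $\Lambda$-free error achievable. If you replace Remark \ref{interior remark} by this system estimate, the rest of your outline (the trivial bound $A(\rho)\leq B(\rho)$, the doubling comparisons from Lemma \ref{doubling condition} and Remark \ref{doubling condition of ball}, taking $\log_2$) goes through and coincides with the paper's argument; in particular the elaborate telescoping and absorption steps you anticipated become unnecessary.
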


\begin{proof}
First we will give interior estimates of $\widetilde{u}$ and $\widetilde{v}$. Let $B_r(z_0)\subseteq\Omega_R\times\mathbb{R}$ be a fixed ball. Let $\phi$ be the cut-off function of $B_r(z_0)$ such that $\phi=1$ in $B_{(1-\eta) r}(z_0)$, $\phi=0$ outside $B_{r}(z_0)$, and $|D\phi|\leq\frac{C}{\eta r}$. Then by multiplying $\widetilde{u}\phi^2$ on both sides of the first equation in $(\ref{new equations with mu})$, and taking integration by parts, we have
\begin{eqnarray*}
\int_{B_r(z_0)}|D\widetilde{u}|^2\phi^2dz&=&-2\int_{B_r(z_0)}\widetilde{u}\phi D\widetilde{u}D\phi dz-\int_{B_r(z_0)}\widetilde{u}\widetilde{v}\phi^2dz
\\&\leq&\frac{1}{2}\int_{B_r(z_0)}|D\widetilde{u}|^2\phi^2dz+2\int_{B_r(z_0)}\widetilde{u}^2|D\phi|^2dz+\frac{1}{2}\left(\int_{B_r(z_0)}\widetilde{u}^2\phi^2dz+\int_{B_r(z_0)}\widetilde{v}^2\phi^2dz\right).
\end{eqnarray*}
This implies that
\begin{equation}
\|\widetilde{u}\|_{W^{1,2}(B_{(1-\eta) r}(z_0))}\leq C\left((\eta r)^{-1}\|\widetilde{u}\|_{L^2(B_r(z_0))}+\|\widetilde{v}\|_{L^2(B_r(z_0))}\right).
\end{equation}
Similarly, by multiplying $\widetilde{v}\phi^2$ on both sides of the second equation in $(\ref{new equations with mu})$, we have
\begin{eqnarray}
\int_{B_r(z_0)}|D\widetilde{v}|^2\phi^2dz&=&-2\int_{B_r(z_0)}\widetilde{v}\phi D\widetilde{v}D\phi dz-\mu\int_{B_r(z_0)}\widetilde{v}^2\phi^2dz\nonumber
\\&\leq&\frac{1}{2}\int_{B_r(z_0)}|D\widetilde{v}|^2\phi^2dz+2\int_{B_r(z_0)}\widetilde{v}^2|D\phi|^2dz.
\end{eqnarray}
This implies that
\begin{equation*}
\|\widetilde{v}\|_{W^{1,2}(B_{(1-\eta) r}(z_0))}\leq \frac{C}{\eta r}\|\widetilde{v}\|_{L^{2}(B_r(z_0))},
\end{equation*}
and then
\begin{equation*}
\|\widetilde{v}\|_{W^{k,2}(B_{(1-\eta) r}(z_0))}\leq \frac{C}{\eta r}\|\widetilde{v}\|_{W^{k-1,2}(B_r(z_0))},
\end{equation*}
So by the iteration argument and Sobolev's Embedding Theorem, for any $B_{r}(z_0)\subseteq\Omega_R\times\mathbb{R}$,
\begin{equation}
\|\widetilde{u}\|_{L^{\infty}(B_{(1-\eta)r}(z_0))}+\|\widetilde{v}\|_{L^{\infty}(B_{(1-\eta) r}(z_0))}\leq \frac{C}{(\eta r)^{\frac{n+2}{2}}}\left(\|\widetilde{u}\|_{L^2(B_{r}(z_0))}+\|\widetilde{v}\|_{L^2(B_{r}(z_0))}\right).
\end{equation}
Thus from Lemma $\ref{doubling condition}$ and Remark $\ref{doubling condition of ball}$, we have
\begin{eqnarray}
M(z_0,r)&=&\frac{1}{2}\log_2\frac{\|\widetilde{u}\|^2_{L^{\infty}(B_r(z_0))}+\|\widetilde{v}\|^2_{L^{\infty}(B_r(z_0))}}{\|\widetilde{u}\|^2_{L^{\infty}(B_{r/2}(z_0))}+\|\widetilde{v}\|^2_{L^{\infty}(B_{r/2}(z_0))}}\nonumber
\\&\leq&C\left(-\log_2\eta-\log_2 r\right)
+\frac{1}{2}\log_2\frac{\|\widetilde{u}\|^2_{L^2(B_{(\eta+1) r}(z_0))}+\|\widetilde{v}\|^2_{L^2(B_{(\eta+1) r}(z_0))}}{\|\widetilde{u}\|^2_{L^2(B_{r/2}(z_0))}+\|\widetilde{v}\|^2_{L^2(B_{r/2}(z_0))}}\nonumber
\\&\leq&C\left(-\log_2\eta-\log_2 r\right)+C(N(z_0,(\eta+1) r)+1).
\end{eqnarray}
which is the inequality $(\ref{M by N})$.

Inequality $(\ref{N by M})$ can be obtained by similar arguments. In fact, from Lemma $\ref{doubling condition}$ again, we have
\begin{eqnarray}
M(z_0,(1+\eta)r)&=&\frac{1}{2}\log_2\frac{\|\widetilde{u}\|^2_{L^{\infty}(B_{(1+\eta)r}(z_0))}+\|\widetilde{v}\|^2_{L^{\infty}(B_{(1+\eta)r}(z_0))}}{\|\widetilde{u}\|^2_{L^{\infty}(B_{\frac{(1+\eta)r}{2}}(z_0))}+\|\widetilde{v}\|^2_{L^{\infty}(B_{\frac{(1+\eta)r}{2}}(z_0))}}\nonumber
\\&\geq&-C\left(-\log2\eta-\log_2r\right)+\frac{1}{2}\log_2\frac{\|\widetilde{u}\|^2_{L^2(B_{(1+\eta)r}(z_0))}+\|\widetilde{v}\|^2_{L^2(B_{(1+\eta)r}(z_0))}}{\|\widetilde{u}\|^2_{L^2(B_{r/2}(z_0))}+\|\widetilde{v}\|^2_{L^2(B_{r/2}(z_0))}}\nonumber
\\&\geq&C\left(-\log_2\eta-\log_2 r\right)+CN(z_0,r/2)-C'.
\end{eqnarray}
Then the first inequality of this Lemma is obtained.
\end{proof}

Now we are ready to give an upper bound for the frequency function and the doubling index.

\begin{lemma}\label{upper bound of frequency}
There exist positive constants $C$ and $R_0$ depending only on $n$ and $\Omega$, such that for any $z_0=(x_0,0)$ with $x_0\in\Omega$ and $r\leq R_0/2$, it holds that
\begin{equation}
N(z_0,r)\leq C\sqrt{\mu},
\end{equation}
provided that $B_{r}(x_0)\subseteq\Omega_R$ and $\mu>0$ large enough.
\end{lemma}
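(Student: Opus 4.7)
My plan is to combine the global $L^\infty$ estimate of Lemma $\ref{explain theorem}$, the frequency--doubling index equivalence of Lemma $\ref{relationship}$, the changing center property of Lemma $\ref{changing center}$, and the almost monotonicity of Lemma $\ref{monotonicity formula}$, to deduce the bound. A key arithmetic observation is that $\Lambda=\lambda^2/4+k^2$ and $\mu=\sqrt{\lambda^2+4k^2}$ satisfy $\Lambda=\mu^2/4$, so the factor $e^{C\Lambda^{1/4}}$ from Lemma $\ref{explain theorem}$ is exactly $e^{C\sqrt{\mu}}$.

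\textbf{Step 1 (global upper bound).} Applying Lemma $\ref{explain theorem}$ to $u$, and using the interior elliptic bounds of Lemma $\ref{interior estimate}$ together with Sobolev embedding to control $\triangle u$ on a slightly enlarged region, I would obtain
\begin{equation*}
\|\widetilde u\|_{L^\infty(B_r(z))}^2+\|\widetilde v\|_{L^\infty(B_r(z))}^2\leq e^{C\sqrt{\mu}}\|u\|_{L^2(\Omega)}^2
\end{equation*}
for every $z\in\Omega\times\{0\}$ and every bounded radius $r$, using the explicit formulas $\widetilde u(x,x_{n+1})=u(x)e^{\sqrt{(\lambda+\mu)/2}\,x_{n+1}}$ and $\widetilde v(x,x_{n+1})=(\triangle u+\tfrac{\lambda+\mu}{2}u)e^{\sqrt{(\lambda+\mu)/2}\,x_{n+1}}$.

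\textbf{Step 2 (lower bound at a distinguished center).} Since $u$ vanishes on $\partial\Omega$, there exists $x_*\in\Omega$ with $|u(x_*)|=\|u\|_{L^\infty(\Omega)}\geq|\Omega|^{-1/2}\|u\|_{L^2(\Omega)}$. Set $z_*=(x_*,0)$; then $\widetilde u(z_*)=u(x_*)$, so
\begin{equation*}
\|\widetilde u\|_{L^\infty(B_{r/2}(z_*))}^2+\|\widetilde v\|_{L^\infty(B_{r/2}(z_*))}^2\geq|\Omega|^{-1}\|u\|_{L^2(\Omega)}^2
\end{equation*}
for every positive $r$. Combining Steps 1 and 2 yields $M(z_*,R_0)\leq C\sqrt{\mu}$ at a fixed scale $R_0$ small enough for all previous lemmas, and then Lemma $\ref{relationship}$ (eq.\ $(\ref{N by M})$ with $\eta=1/2$) combined with the almost monotonicity of Lemma $\ref{monotonicity formula}$ gives $N(z_*,\rho)\leq C\sqrt{\mu}$ for all $\rho\leq R_0/2$.

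\textbf{Step 3 (propagation to arbitrary $z_0$).} For any $z_0\in B_{R_0/8}(z_*)$ the changing center property (Lemma $\ref{changing center}$) immediately gives $N(z_0,\rho)\leq C(N(z_*,R_0/2)+1)\leq C\sqrt{\mu}$ for $\rho\leq R_0/8$. To reach a $z_0$ farther from $z_*$, I would chain the argument through a finite collection of overlapping balls covering $\overline{\Omega}\times\{0\}$: at each stage use the doubling inequalities (Lemma $\ref{doubling condition}$ and Remark $\ref{doubling condition of ball}$) to transport a lower bound on the $L^2$ mass of $(\widetilde u,\widetilde v)$ into the next ball, select a new local maximum point there to reset the denominator bound, and reapply Steps 1--2 followed by the changing center lemma. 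Since the cardinality of the covering depends only on $n$ and $\Omega$, so does the final multiplicative constant.

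\textbf{Main obstacle.} The crux is Step 3: each invocation of Lemma $\ref{changing center}$ contracts the admissible radius by a factor of $4$, so naive iteration only covers a ball of fixed diameter about $z_*$. Controlling the propagation therefore requires alternating carefully between doubling (to transport $L^2$ mass across overlapping balls without losing too much) and local ``re-maximization'' of $(\widetilde u,\widetilde v)$, while tracking multiplicative constants so that the bound retains the form $C\sqrt{\mu}$ rather than degrading exponentially in the number of balls. A secondary technical point is verifying that the analogue of Lemma $\ref{explain theorem}$ used in Step 1 for $\triangle u$ is really of order $e^{C\sqrt{\mu}}$, which requires combining the interior estimates (Lemma $\ref{interior estimate}$) with the boundary extension argument carefully.
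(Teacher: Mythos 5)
Your proposal follows essentially the same route as the paper: the global $L^\infty$ bound from Lemma~\ref{explain theorem} (with the observation $\Lambda=\mu^2/4$), the lower bound $\geq|\Omega|^{-1/2}$ at a maximum point $\bar x$ giving $M(\bar z,R_0)\leq C\sqrt\mu$, the passage to the frequency via Lemma~\ref{relationship} and the monotonicity, and then a finite chain of overlapping balls via Lemma~\ref{changing center}. The ``main obstacle'' you flag is resolved in the paper exactly as you suspect: after the changing-center lemma yields $N(z,R_0/4),\,M(z,R_0/4)\leq C\sqrt\mu$ at a new center $z$, the containment of the previous ball inside $B_{R_0/2}(z)$ gives the lower bound $\|\widetilde u\|_{L^\infty(B_{R_0/2}(z))}\geq e^{-C\sqrt\mu}$, which together with the fixed global upper bound re-establishes $M(z,R_0)\leq C\sqrt\mu$ at the \emph{original} scale $R_0$; since the number of chain steps needed to cover $\Omega$ is a fixed constant depending only on $\Omega$, the constants accumulate only a bounded number of times and the bound stays of order $\sqrt\mu$.
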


\begin{proof}
Without loss of generality, assume that $\|u\|_{L^2(\Omega)}=1$. Then from Lemma $\ref{explain theorem}$ and the relationship between $u$ and $\widetilde{u}$, we have

\begin{eqnarray}
\|\widetilde{u}\|_{L^{\infty}(\Omega\times(-R,R))}\leq e^{C\sqrt{\mu}R}\|u\|_{L^{\infty}(\Omega)}\leq  e^{C(\sqrt{\mu}+\Lambda^{1/4})R}\|u\|_{L^2(\Omega)},
\end{eqnarray}
where $R$ is the same positive constant as in Lemma $\ref{explain theorem}$. From the proof of Lemma $\ref{explain theorem}$ and the relationship between $u$ and $\widetilde{u}$ again,
\begin{equation}
\|\widetilde{v}\|_{L^{\infty}(\Omega\times(-R,R))}\leq C\|\widetilde{u}\|_{W^{2,\infty}(\Omega\times(-R,R))}\leq e^{C\sqrt{\mu}R}\|u\|_{W^{2,\infty}(\Omega)}
\leq e^{C(\sqrt{\mu}+\Lambda^{1/4})R}\|u\|_{L^2(\Omega)},
\end{equation}
where $C$ in different terms are different positive constants depending only on $n$ and $\Omega$. So
\begin{equation}\label{upper bound of Linfty of u and v}
\|\widetilde{u}\|^2_{L^{\infty}(\Omega\times(-R,R))}+\|\widetilde{v}\|^2_{L^{\infty}(\Omega\times(-R,R))}\leq e^{C(\sqrt{\mu}+\Lambda^{1/4})R}.
\end{equation}
Let $\bar{x}$ be the maximum point of $u$ in $\overline{\Omega}$ and $\bar{z}=(\bar{x},0)$. Since $\|u\|_{L^2(\Omega)}=1$, there holds
\begin{equation}
|u(\bar{x})|=\|u\|_{L^{\infty}(\Omega)}\geq\frac{\|u\|_{L^2(\Omega)}}{\sqrt{|\Omega|}}=|\Omega|^{-\frac{1}{2}}.
\end{equation}
Here $|\Omega|$ means the $n$ dimensional Hausdorff measure of $\Omega$.
Then for any $r<R$, from $(\ref{upper bound of Linfty of u and v})$, 
\begin{equation}
M(\bar{z},r)=\frac{1}{2}\log_2\frac{\|\widetilde{u}\|^2_{L^{\infty}(B_r(\bar{z}))}+\|\widetilde{v}\|^2_{L^{\infty}(B_r(\bar{z}))}}{\|\widetilde{u}\|^2_{L^{\infty}(B_r(\bar{z}))}+\|\widetilde{v}\|^2_{L^{\infty}(B_r(\bar{z}))}}\leq \frac{1}{2}\log_2\frac{e^{C(\sqrt{\mu}+\Lambda^{1/4})R}}{u(\bar{x})}\leq C(\sqrt{\mu}+\Lambda^{1/4}),
\end{equation}
where $C$ is a positive constant depending on $n$, $\Omega$ and $R$. In the first inequality above we have also used the assumption that $\mu>0$ is large enough. Then by Lemma $\ref{relationship}$ with $\eta=\frac{1}{4}$, and noting that $\widetilde{u}(x,x_{n+1})=u(x)e^{\sqrt{\frac{\lambda+\mu}{2}}x_{n+1}}$ and $\widetilde{v}(x,x_{n+1})=\left(\triangle u(x)+\frac{\lambda+\mu}{2}u(x)\right)e^{\sqrt{\frac{\lambda+\mu}{2}}x_{n+1}}$, we have for $r\leq R_0$, with $R_0=\min\left\{r_0,R/4\right\}$, such that $N(\bar{z},R_0)\leq C\sqrt{\mu}$ with $\bar{z}=(\bar{x},0)$, provided that $\mu>0$ large enough. Then from Lemma $\ref{changing center}$ and Lemma $\ref{relationship}$,
$N\left(z,\frac{R_0}{4}\right), M\left(z,\frac{R_0}{4}\right)\leq C(\sqrt{\mu}+\Lambda^{1/4})$, where $z\in B_{\frac{R_0}{4}}(\bar{z})$ with $z=(x,0)$ and $x\in\Omega$. So $\|\widetilde{u}\|_{L^{\infty}(B_{\frac{R_0}{2}}(z))}\geq e^{-C(\sqrt{\mu}+\Lambda^{1/4})}$. This implies that $M(z,R_0)\leq C\sqrt{\mu}$ for above $z$. By the similar argument for finitely many steps, where the number of the steps depends only on $\Omega$, $R$ and $R_0$, we have that for any $z=(x,0)$ with $x\in\Omega$,  $M(z, R_0)\leq C(\sqrt{\mu}+\Lambda^{1/4}).$ Then by the fact that $\Lambda=\left(\frac{\mu}{2}\right)^2$ and Lemma $\ref{relationship}$ again, it holds that $N(z, 2R_0/3)\leq C(\sqrt{\mu}+\Lambda^{1/4})\leq C\sqrt{\mu}.$ By the inequality $(\ref{same center control})$,
\begin{equation}
N(z,r)\leq C\sqrt{\mu},\quad \forall\ r\leq \frac{R_0}{2}.
\end{equation}
This completes the proof. 
\end{proof}

Now we arrive at proving Theorem $\ref{quantitative uniqueness}$.

\textbf{Proof of Theorem $\ref{quantitative uniqueness}$:}

Without loss of generality, assume that $z_0=(0,0)$. Let $m$ and $l$ be the vanishing order of $\widetilde{u}$ and $\widetilde{v}=\triangle\widetilde{u}$ at the origin $(0,0)$, respectively. Recall the definition of the vanishing order, we have that
\begin{equation}
\begin{cases}
D^{\alpha}\widetilde{u}(0)=0,\ \ for\ any\ \ |\alpha|<k,\quad D^{\alpha}\widetilde{u}(0)\neq0\ \ for\ some\ \ |\alpha|=m;\\
D^{\alpha}\widetilde{v}(0)=0,\ \ for \ any\ \ |\alpha|<l,\quad D^{\alpha}\widetilde{v}(0)\neq0\ \ for\ some\ \ |\alpha|=l.
\end{cases}
\end{equation}
Thus for $r>0$ small enough, we can rewrite $\widetilde{u}$ and $\widetilde{v}$ as follows.
\begin{equation}\label{Taylor of u and v}
\begin{cases}
\widetilde{u}(z)=r^m\phi(\theta)+o(r^m),\\
\widetilde{v}(z)=r^l\psi(\theta)+o(r^l).
\end{cases}
\end{equation}
Here $r=|z|$, $(r,\theta)$ is the spherical coordinates of $z$,  $\phi$ and $\psi$ are analytic functions of $\theta$.
Now we claim that
\begin{equation}\label{estimate of k and l}
\lim\limits_{r\rightarrow0+}N(0,r)= \min\left\{m,l\right\}.
\end{equation}
In fact,
\begin{eqnarray}
\lim\limits_{r\rightarrow0+}N(0,r)&=&\lim\limits_{r\rightarrow0+}r\frac{\int_{\partial B_r(0)}(\widetilde{u}\widetilde{u}_{\nu}+\widetilde{v}\widetilde{v}_{\nu})d\sigma}{\int_{\partial B_r(0)}(\widetilde{u}^2+\widetilde{v}^2)d\sigma}\nonumber
\\&=&\lim\limits_{r\rightarrow0+}\frac{\int_{\partial B_r(0)}(mr^{2m}\phi^2(\theta)+lr^{2l}\psi^2(\theta)+o(r^{2m}+o(r^{2l})))d\sigma}{\int_{\partial B_r(0)}(r^{2m}\phi^2(\theta)+r^{2l}\psi(\theta)+o(r^{2m}+o(r^{2l}))d\sigma}\nonumber
\\&=&\min\left\{m,l\right\}.
\end{eqnarray}
From Lemma $\ref{upper bound of frequency}$, we have $\min\left\{m,l\right\}\leq C\sqrt{\mu}$. This means that the vanishing order of $\widetilde{u}$ is less than or equal to $C\sqrt{\mu}$, since it is observed that $m\leq l+2$. Then from the relationship of $\widetilde{u}$ and $u$, i.e., $\widetilde{u}(x,x_{n+1})=u(x)e^{\sqrt{\frac{\lambda+\mu}{2}}x_{n+1}}$ with $\mu=\sqrt{\lambda^2+4k^2}$, the conclusion of Theorem $\ref{quantitative uniqueness}$ is obtained.
\qed


\section{Measure estimate for the nodal set}

The doubling estimates in the above section are established for $\left(\|\widetilde{u}\|_{L^2}^2+\|\widetilde{v}\|_{L^2}^2\right)$. We will give below a new doubling estimate for $\|\widetilde{u}\|_{L^2}^2$.
\begin{lemma}\label{new doubling condition}
There exist positive constants $\bar{r}$, $C_1$, and $C_2$ depending onlly on $n$, such that for any $r\leq \bar{r}/2$, $\eta\in\left(0,\frac{1}{3}\right)$, and $x_0\in\Omega$ with $B_{r}(x_0)\subseteq \Omega_R$, 
\begin{equation}
\int_{B_{(1+\eta)r}(z_0)}\widetilde{u}^2dz\leq C_2(1+3\eta)^{C_1\sqrt{\mu}}\left(\mu^2+\eta^{-4}r^{-4}\right)\int_{B_{r}(z_0)}\widetilde{u}^2dz,
\end{equation}
where $z_0=(x_0,0)$.
\end{lemma}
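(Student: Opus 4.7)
The plan is to combine the combined doubling estimate (Remark~\ref{doubling condition of ball}) already proven for the pair $\widetilde{u}^2+\widetilde{v}^2$ with a Caccioppoli-type bound that converts $\widetilde{v}=\triangle\widetilde{u}$ back into $\widetilde{u}$ on a slightly larger ball. The concentric radii in the two steps will be chosen so that the ratio appearing in the doubling is exactly $1+3\eta$, while the gap available for the Caccioppoli estimate is of order $\eta r$.

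Concretely, I would introduce the intermediate radius $r_0:=(1+\eta)r/(1+3\eta)$, which lies in $(0,r)$ for $\eta\in(0,1/3)$; one verifies $(1+\eta)r/r_0=1+3\eta$ and $r-r_0=2\eta r/(1+3\eta)\geq \eta r/2$. Applying Remark~\ref{doubling condition of ball} with $r_1=r_0$ and $r_2=(1+\eta)r$, using Lemma~\ref{upper bound of frequency} to bound $N(z_0,(1+\eta)r)\leq C\sqrt{\mu}$, and passing from averages to integrals (which costs a harmless factor $(1+3\eta)^{n+1}\leq 2^{n+1}$), I would obtain
\begin{equation*}
\int_{B_{(1+\eta)r}(z_0)}(\widetilde{u}^2+\widetilde{v}^2)\,dz\leq C(1+3\eta)^{C\sqrt{\mu}}\int_{B_{r_0}(z_0)}(\widetilde{u}^2+\widetilde{v}^2)\,dz.
\end{equation*}

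To control this right-hand side by $\int_{B_r}\widetilde{u}^2$ alone, the bound $\int_{B_{r_0}}\widetilde{u}^2\leq\int_{B_r}\widetilde{u}^2$ is trivial, and for the $\widetilde{v}$-part I would replay the Caccioppoli cascade in the proof of Lemma~\ref{interior estimate} (specifically the passage from $(\ref{second order estimate})$ to $(\ref{two order estimate})$), applied to the equation $\triangle^2\widetilde{u}-\mu\triangle\widetilde{u}=0$ satisfied by $\widetilde{u}$ in $\Omega_R\times\mathbb{R}$. Testing against $\widetilde{u}\psi$ with the cutoff $\psi=e^{1-\bar\phi^{-1}}$ constructed there, where $\bar\phi$ is a smooth cutoff equal to $1$ on $B_{r_0}(z_0)$ and vanishing outside $B_r(z_0)$, yields
\begin{equation*}
\int_{B_{r_0}(z_0)}|\triangle\widetilde{u}|^2\,dz\leq C(\mu^2+(\eta r)^{-4})\int_{B_r(z_0)}\widetilde{u}^2\,dz.
\end{equation*}
Substituting these two ingredients into the preceding doubling inequality and relabelling constants produces the asserted estimate.

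The main obstacle is to establish the Caccioppoli-type bound on $|\triangle\widetilde{u}|^2$ with the linear scaling in $\mu^2$. The proof of Lemma~\ref{interior estimate} was written for the equation $\triangle^2\bar{u}=\Lambda\bar{u}$, in which $\Lambda$ already enters linearly at the level of $\int|D^2\bar{u}|^2$. For $\widetilde{u}$ the right-hand side of the equation is instead $\mu\triangle\widetilde{u}$, so the term $\mu\int\triangle\widetilde{u}\cdot\widetilde{u}\psi$ must be handled by a single application of Young's inequality to produce $\mu^2\int\widetilde{u}^2\psi$; one must be careful not to route through the full $W^{4,2}$-type bound, which would only yield the weaker $\mu^4$ scaling.
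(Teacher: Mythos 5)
Your proposal is correct and follows essentially the same route as the paper: first apply the $L^2$ doubling estimate for $\widetilde{u}^2+\widetilde{v}^2$ (Remark~\ref{doubling condition of ball} combined with Lemma~\ref{upper bound of frequency}), then use a Caccioppoli-type bound derived from $\triangle^2\widetilde{u}=\mu\triangle\widetilde{u}$ to convert $\int\widetilde{v}^2$ on the smaller ball into $(\mu^2+(\eta r)^{-4})\int\widetilde{u}^2$ on $B_r$. The only cosmetic difference is your choice of intermediate radius $r_0=(1+\eta)r/(1+3\eta)$, whereas the paper uses $(1-\eta)r$ and absorbs the ratio via $\frac{1+\eta}{1-\eta}\leq 1+3\eta$ for $\eta\leq\frac{1}{3}$; your concluding remark about obtaining $\mu^2$ (rather than $\mu^4$) scaling by testing $\triangle^2\widetilde{u}-\mu\triangle\widetilde{u}=0$ directly against $\widetilde{u}\psi$ correctly identifies the point where care is needed.
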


\begin{proof}

From Lemma $\ref{doubling condition}$ and Lemma $\ref{upper bound of frequency}$,
\begin{eqnarray}\label{old doubling condition}
\int_{B_{(1+\eta)r}(z_0)}\widetilde{u}^2dz\leq\int_{B_{(1+\eta)r}(z_0)}\left(\widetilde{u}^2+\widetilde{v}^2\right)dz
\leq\left(\frac{1+\eta}{1-\eta}\right)^{C_1\sqrt{\mu}}\int_{B_{(1-\eta)r}(z_0)}\left(\widetilde{u}^2+\widetilde{v}^2\right)dz.
\end{eqnarray}
By the same argument as in the proof of Lemma $\ref{interior estimate}$, it holds that 
\begin{eqnarray}
\|\widetilde{v}\|_{L^2(B_{(1-\eta)r}(z_0))}\leq C(\mu+r^{-2}\eta^{-2})\|\widetilde{u}\|_{L^2(B_r(z_0))}.
\end{eqnarray}
Then we have
\begin{eqnarray}
\int_{B_{(1+\eta)r}(z_0)}\widetilde{u}^2dz &\leq&\left(\frac{1+\eta}{1-\eta}\right)^{C_1\sqrt{\mu}+1}\int_{B_{(1-\eta)r}(z_0)}(\widetilde{u}^2+\widetilde{v}^2)dz\nonumber
\\&\leq& \left(\frac{1+\eta}{1-\eta}\right)^{C_1(\sqrt{\mu}+1)}C_2(\mu^2+
\eta^{-4}r^{-4})\int_{B_{r}(z_0)}\widetilde{u}^2 dz\nonumber
\\&\leq&(1+3\eta)^{C_1\sqrt{\mu}}C_2\left(\mu^2+\eta^{-4}r^{-4}\right)\int_{B_{r}(z_0)}\widetilde{u}^2dz.
\end{eqnarray}
which is the desired result.
\end{proof}

\begin{remark}\label{remark new doubling condition of u}
From the relationship between $\widetilde{u}$ and $u$, one can obtian that for any $\eta\in\left(0,\frac{1}{3}\right)$,
\begin{equation}\label{new doubling condition of u}
\int_{B_{(1+\eta)r}(x_0)}u^2dx\leq (1+3\eta)^{C\sqrt{\mu}}C\left(\mu^2+\eta^{-4}r^{-4}\right)\int_{B_{r}(x_0)}u^2dx,
\end{equation}
where $B_{(1+\eta)r}(x_0)\subseteq \Omega$, and $C$ is a positive constant depending only on $n$.
\end{remark}

To get the measure estimate of the nodal set of $u$, we also need the following lemma which can be seen in $\cite{Donnelly and Fefferman1}$.
\begin{lemma}\label{basic theorem 4}
Let $f: \ B_1\subseteq \mathbb{C}\rightarrow\mathbb{C}$ be an analytic function with $|f(0)|=1$ and $\sup\limits_{B_1}|f|\leq 2^K$ for some positive constant $K$. Then for any $r\in(0,1)$, the number of zero points of $f$ in $B_r(0)$ is less than or equal to $CK$, where $C$ is a positive constant depending only on $r$.
\end{lemma}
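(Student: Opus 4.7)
The plan is to invoke Jensen's formula for analytic functions on a disk, which directly ties the number of zeros to the logarithmic growth of $f$. I would apply Jensen's formula to $f$ on the disk $B_1(0) \subseteq \mathbb{C}$; since $|f(0)| = 1 \neq 0$, the formula reads
\begin{equation*}
\frac{1}{2\pi}\int_0^{2\pi}\log|f(e^{i\theta})|\,d\theta \;=\; \log|f(0)| + \sum_{j:\,|a_j|<1}\log\frac{1}{|a_j|},
\end{equation*}
where $\{a_j\}$ enumerates the zeros of $f$ in $B_1(0)$ counted with multiplicity.

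Next I would plug in the hypotheses. The assumption $|f(0)|=1$ kills the first term on the right, and the sup bound $\sup_{B_1}|f| \leq 2^K$ upper bounds the integrand by $K\log 2$, so
\begin{equation*}
\sum_{j:\,|a_j|<1}\log\frac{1}{|a_j|} \;\leq\; K\log 2.
\end{equation*}
Every zero lying in the smaller disk $B_r(0)$ with $r \in (0,1)$ satisfies $|a_j| \leq r$ and therefore contributes at least $\log(1/r) > 0$ to the sum on the left. Denoting by $\mathcal{N}_r$ the number of zeros of $f$ in $B_r(0)$, I obtain
\begin{equation*}
\mathcal{N}_r \cdot \log(1/r) \;\leq\; \sum_{j:\,|a_j|<1}\log\frac{1}{|a_j|} \;\leq\; K\log 2,
\end{equation*}
which rearranges to $\mathcal{N}_r \leq (\log 2 / \log(1/r))\,K = CK$ with $C = \log 2 / \log(1/r)$ depending only on $r$, as claimed.

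There is no real obstacle here; Jensen's formula does all the work and the argument is standard. The only mild subtlety is making sure $f(0) \neq 0$ is used (which is given via $|f(0)|=1$) so that Jensen's formula applies in its classical form rather than its modified version for functions vanishing at the origin, and noting that zeros are counted with multiplicity on both sides of the inequality.
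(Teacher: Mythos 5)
Your proof is correct, and since the paper does not provide its own proof of this lemma (it simply cites Donnelly--Fefferman), there is nothing internal to compare against; the Jensen's-formula argument you give is the standard one and is certainly what underlies the cited result. Two small points of care worth noting: first, Jensen's formula in its textbook form is stated for functions analytic (or continuous) up to the boundary circle, whereas here $f$ is only assumed analytic and bounded on the open disk $B_1$; the cleanest fix is to apply the formula on a disk of radius $\rho$ with $r<\rho<1$, obtain $\mathcal{N}_r \le K\log 2/\log(\rho/r)$, and let $\rho\to 1^-$ (alternatively, invoke the fact that $H^\infty$ functions have radial boundary values a.e.\ and Jensen's inequality persists). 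Second, the bound you derive, $C=\log 2/\log(1/r)$, blows up as $r\to 1^-$, which is exactly consistent with the lemma's insistence that $C$ depend on $r$. With the limiting argument inserted, the proof is complete and correct.
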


\begin{remark}
In this lemma, it is obvious that the domain $B_1$ is not essential. If one changes $B_1$ into $B_{t}$ for any fixed positive constant $t$, then the conclusion still holds.
\end{remark}

From the new doubling condition in this section, Lemma $\ref{basic theorem 4}$, and the integral geometric formula, which can be found in $\cite{Lin and Yang}$, we can estimate the measure upper bound for the nodal set of $u$ in $\Omega$.

\textbf{Proof of Theorem \ref{nodal set}:}

Let $x_0$ be a point in $\Omega$ and $z_0=(x_0,0)$. Then from Lemma $\ref{upper bound of frequency}$, $N(z_0,R_0)\leq C\sqrt{\mu}$, and $N(z,R_0/2)\leq C\sqrt{\mu}$ for any $z=(x,0)$ with $x\in B_{\frac{R_0}{4}}(x_0)$. Here $R_0$ is a positive constant depending only on $n$ and $\Omega$. Without loss of generality, let $\|\widetilde{u}\|_{L^2(B_{R_0/4}(z_0))}=1$. Then from Lemma $\ref{new doubling condition}$, for any $z\in B_{\frac{R_0}{4}}(z_0)$,

\begin{eqnarray}\label{lower bound of u^2}
\int_{B_{\frac{R_0}{16}}(z)}\widetilde{u}^2dz&\geq&2^{-C(\sqrt{\mu}+1)}\int_{B_{\frac{R_0}{2}}(z)}\widetilde{u}^2dz\\\nonumber
&\geq&2^{-C(\sqrt{\mu}+1)}\int_{B_{\frac{R_0}{4}}(z_0)}\widetilde{u}^2dz\\\nonumber
&=&2^{-C(\sqrt{\mu}+1)}.
\end{eqnarray}
So there exists some point $p_{z}\in B_{\frac{R_0}{16}}(z)$ such that $|\widetilde{u}(p_z)|\geq 2^{-C\sqrt{\mu}}$, since otherwise
\begin{equation}
\int_{B_{\frac{R_0}{16}}(z)}\widetilde{u}^2dz\leq |B_{R_0/16}(z)|2^{-2C\sqrt{\mu}}=CR_0^{n+1}2^{-2C\sqrt{\mu}}.
\end{equation}
This is a contradiction to $(\ref{lower bound of u^2})$, provided that $R_0$ is small enough.
Now choose $z_j\in \partial B_{\frac{R_0}{4}}(z_0)$ on the $x_j$ axis, $j=1,2,\cdots,n+1$. Then for any $j\in\left\{1,2,\cdots,n+1\right\},$ there exists $p_{z_j}\in B_{R_0}(z_j)$ such that $|\widetilde{u}(p_{z_j})|\geq 2^{-C\sqrt{\mu}}$. On the other hand, from the interior estimates, we also have that
$\|\widetilde{u}\|_{L^{\infty}(B_{\frac{R_0}{2}})(z_0)}\leq 2^{C(\sqrt{\mu}+1)}$.

Define $f_j(w;t)=\widetilde{u}(p_{z_j}+tR_0w)$ for $t\in\left(-\frac{5}{16},\frac{5}{16}\right)$ and let $w$ belong to the $n$ dimensional unit sphere. Because each $f_j$ is analytic for $t$, we can extend it to an analytic function $f_j(w;t+i\tau)$ to $|t|<\frac{5}{16}$ and $|\tau|\leq c$, where $c$ is a positive constant depending only on $n$ and $\Omega$. Then from Lemma $\ref{basic theorem 4}$,
\begin{equation*}
\mathcal{H}^0\left\{|t|<\frac{5}{16}\ \big|\ \widetilde{u}(p_{z_j}+tR_0w)=0\right\}\leq C\sqrt{\mu}.
\end{equation*}
Here $\mathcal{H}^0$ is the counting measure. Thus from the integral geometric formula in $\cite{Han and Lin book}$ and $\cite{Lin and Yang}$,
\begin{equation*}
\mathcal{H}^{n}\left(\left\{z\in B_{\frac{R_0}{32}}(z_0)\ \big|\ \widetilde{u}(z)=0\right\}\right)\leq C\sqrt{\mu}R_0^{n},
\end{equation*}
 Because $\widetilde{u}(z)=\widetilde{u}(x,x_{n+1})=u(x)e^{\sqrt{\mu}x_{n+1}}$, and the function $e^{\sqrt{\mu}x_{n+1}}$ is always positive,
\begin{equation*}
\mathcal{H}^{n-1}\left(\left\{z\in B_{\frac{R_0}{64}}(x_0)\ \big|\ u(x)=0\right\}\right)\leq \frac{C}{R_0}\mathcal{H}^{n}\left(\left\{z\in B_{\frac{R_0}{32}}(z_0)\ \big|\ \widetilde{u}(z)=0\right\}\right)\leq C\sqrt{\mu}R_0^{n-1}.
\end{equation*}
Then by covering $\Omega$ with finitely many balls whose radius are $\frac{R_0}{64}$, we have
\begin{equation}\label{inside nodal set}
\mathcal{H}^{n-1}\left(\left\{x\in \Omega\ \big|\ u(x)=0\right\}\right)\leq C\sqrt{\mu}R_0^{-1}\leq C'\sqrt{\mu},
\end{equation}
which is the desired result. 
\qed

\section{Propagation of smallness}

In this section, we will discuss the propagation of smallness of $u$, i.e., we will prove Theorem $\ref{propagation of smallness}$. We do not assume that $\partial\Omega$ is analytic, the frequency function and the doubling index are defined only inside $\Omega$.
We first need the three sphere inequality below.

\begin{lemma}\label{three spheres inequality}
Let $\widetilde{u}$ and $\widetilde{v}$ satisfy $(\ref{new equations with mu})$, $r_0$ be the same positive constant as in Lemma $\ref{monotonicity formula}$. Then for any $r_1<r_2<r_3<r_0$ and $z_0=(x_0,0)$ with $x_0\in \Omega$ and $B_{r_0}(x_0)\subseteq\Omega$, we have
\begin{equation}
\begin{cases}
\|\widetilde{u}\|^2_{L^2(B_{r_2}(z_0))}+\|\widetilde{v}\|^2_{L^2(B_{r_2}(z_0))}\leq Q(\alpha)\left(\|\widetilde{u}\|^{2}_{L^2(B_{r_1}(z_0))}+\|\widetilde{v}\|^2_{L^2(B_{r_1}(z_0))}\right)^{\alpha}\left(\|\widetilde{u}\|^{2}_{L^2(B_{r_3}(z_0))}+\|\widetilde{v}\|^2_{L^2(B_{r_3}(z_0))}\right)^{1-\alpha},\\
\|\widetilde{u}\|_{L^2(B_{r_2}(z_0))}\leq P(\beta)\|\widetilde{u}\|^{\beta}_{L^2(B_{r_1}(z_0))}\|\widetilde{u}\|^{1-\beta}_{L^2(B_{r_3}(z_0))},
\end{cases}
\end{equation} 
where $$Q(\alpha)=\frac{(r_2/r_1)^\alpha}{(r_3/r_2)^{1-\alpha}}\left(\frac{r_2}{r_1}\right)^{\frac{C_2}{\alpha}},$$  $$\alpha=\frac{\ln(r_2/r_1)}{\ln(r_2/r_1)+C_1\ln(r_3/r_2)}\in(0,1),$$ $$P(\beta)=C(\mu+r_1^{-2})^{\beta}(\mu+(r_3-r_2)^{-2})^{1-\beta}\frac{(2r_2/r_1)^{\beta}}{((r_3+r_2)/(2r_2))^{1-\beta}}\left(\frac{2r_2}{r_1}\right)^{\frac{C_2}{\beta}},$$ and $$\beta=\frac{\ln(2r_2/r_1)}{\ln(2r_2/r_1)+C_1\ln((r_3+r_2)/(2r_2))}\in(0,1).$$
Here $C$, $C_1$, and $C_2$ are positive constants depending only on $n$.
\end{lemma}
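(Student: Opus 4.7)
The plan is to derive both three-sphere inequalities directly from the doubling estimates in Lemma~\ref{doubling condition} and Remark~\ref{doubling condition of ball}, together with the Caccioppoli-type elliptic estimate between $\widetilde{v}=\triangle\widetilde{u}$ and $\widetilde{u}$ that appears inside the proof of Lemma~\ref{new doubling condition}. Throughout, write $H(r):=\|\widetilde{u}\|^2_{L^2(B_r(z_0))}+\|\widetilde{v}\|^2_{L^2(B_r(z_0))}$, and let $N$ stand for the frequency $N(z_0,r_2)$; by Lemma~\ref{monotonicity formula} this value is comparable, up to a bounded multiplicative factor and additive constant, to $N(z_0,r)$ for any $r$ of the same order of magnitude, so only one symbol is needed when combining estimates at the three radii.

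For the first inequality, I would combine the two halves of Remark~\ref{doubling condition of ball}. The upper half gives $H(r_2)\leq(r_2/r_1)^{\sigma(N+1)}H(r_1)$ for some absolute constant $\sigma$, while the lower half gives $H(r_3)\geq(r_3/r_2)^{\sigma' N-\sigma''}H(r_2)$. Taking logarithms of the lower inequality and solving for $N$ yields a bound on $N$ in terms of $\log(H(r_3)/H(r_2))/\log(r_3/r_2)$; substituting this bound into the logarithm of the upper inequality produces
\begin{equation*}
\log H(r_2) - \log H(r_1) \leq C_1\,\frac{\log(r_2/r_1)}{\log(r_3/r_2)}\bigl(\log H(r_3) - \log H(r_2)\bigr) + C_2\log(r_2/r_1),
\end{equation*}
and collecting $\log H(r_2)$ on the left produces a convex combination of $\log H(r_1)$ and $\log H(r_3)$ whose coefficients are exactly $\alpha$ and $1-\alpha$ in the statement. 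The residual error, of size $C_2\log(r_2/r_1)\log(r_3/r_2)/\bigl(\log(r_2/r_1)+C_1\log(r_3/r_2)\bigr)$, becomes the factor $Q(\alpha)$ after exponentiation; in particular, the characteristic contribution $(r_2/r_1)^{C_2/\alpha}$ is the exponentiated error once the denominator is rewritten via the definition of $\alpha$.

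For the second inequality, I would apply the first inequality with the shifted radii $r_1' = r_1/2$, $r_2' = r_2$, $r_3' = (r_2+r_3)/2$; this substitution is precisely what generates the ratios $2r_2/r_1$ and $(r_3+r_2)/(2r_2)$ appearing in $\beta$ and in $P(\beta)$. On the left-hand side, the non-negative term $\|\widetilde{v}\|^2_{L^2(B_{r_2})}$ is simply discarded, leaving $\|\widetilde{u}\|^2_{L^2(B_{r_2})}$. On the right-hand side, the $\widetilde{v}$-contributions over $B_{r_1/2}$ and $B_{(r_2+r_3)/2}$ are converted into $\widetilde{u}$-contributions over the larger balls $B_{r_1}$ and $B_{r_3}$ through the Caccioppoli-type estimate
\begin{equation*}
\|\widetilde{v}\|_{L^2(B_\rho)} \leq C\bigl(\mu+(\rho'-\rho)^{-2}\bigr)\|\widetilde{u}\|_{L^2(B_{\rho'})},\qquad \rho<\rho',
\end{equation*}
obtained by testing the fourth-order equation $\triangle^2\widetilde{u}=\mu\triangle\widetilde{u}$ against a cut-off multiple of $\widetilde{u}$ and integrating by parts, just as in the proof of Lemma~\ref{new doubling condition}. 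With the concrete gaps $\rho'-\rho=r_1/2$ and $\rho'-\rho=(r_3-r_2)/2$, the two prefactors $(\mu+r_1^{-2})^\beta$ and $(\mu+(r_3-r_2)^{-2})^{1-\beta}$ in $P(\beta)$ appear naturally.

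The hardest part will be the bookkeeping of constants and exponents: matching the precise logarithmic ratios defining $\alpha$ and $\beta$, identifying the exponentiated error with $Q(\alpha)$ respectively $P(\beta)$ in the exact form claimed, and verifying that after the two outer elliptic conversions the middle-radius term reduces to $\|\widetilde{u}\|_{L^2(B_{r_2})}$ alone rather than picking up a residual $\widetilde{v}$ contribution. Once the frequency almost-monotonicity is used to reconcile the different $N$-values at the three radii and the shifted radii $r_1/2$, $(r_2+r_3)/2$ are chosen with strictly positive gaps to the outer balls, the remainder of the argument reduces to algebraic manipulation with logarithms.
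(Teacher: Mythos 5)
Your proposal follows the paper's own argument almost step for step. For the first inequality the paper, too, integrates $H'/H = (n-1)/r + 2N(z_0,r)/r$ from $r_1$ to $r_2$ and from $r_2$ to $r_3$, uses the almost-monotonicity of $N$ to bound $N(\cdot)$ on $[r_1,r_2]$ from above and on $[r_2,r_3]$ from below by (multiples of) $N(z_0,r_2)$, and then eliminates $N(z_0,r_2)$ between the two resulting estimates -- this is precisely your ``solve the lower inequality for $N$ and substitute into the upper one''; the only cosmetic difference is that you invoke the doubling inequalities of Remark~\ref{doubling condition of ball} rather than going straight back to $\frac{N'}{N}\geq -Cr$, which amounts to the same thing. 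For the second inequality the paper's own sentence (``comes from Lemma~\ref{interior estimate} and the first inequality by replacing $r_1$ with $r_1/2$ and $r_3$ with $(r_2+r_3)/2$'') is exactly your plan: drop $\|\widetilde v\|^2_{L^2(B_{r_2})}$ on the left and trade $\widetilde v$ for $\widetilde u$ on the outer balls via the Caccioppoli-type estimate $\|\widetilde v\|_{L^2(B_\rho)}\leq C(\mu+(\rho'-\rho)^{-2})\|\widetilde u\|_{L^2(B_{\rho'})}$, which yields the factors $(\mu+r_1^{-2})^\beta$ and $(\mu+(r_3-r_2)^{-2})^{1-\beta}$ and the shifted ratios in $\beta$. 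One small caveat: when you collect $\log H(r_2)$, the coefficient that falls out on $\log H(r_1)$ is $\frac{\ln(r_3/r_2)}{\ln(r_3/r_2)+C_1\ln(r_2/r_1)}$, not literally the $\alpha=\frac{\ln(r_2/r_1)}{\ln(r_2/r_1)+C_1\ln(r_3/r_2)}$ printed in the statement, so your ``coefficients are exactly $\alpha$ and $1-\alpha$ in the statement'' is slightly too quick -- this appears to be a typo reproduced in the paper's own text (and likewise for $\beta$), and it does not affect the validity of the method.
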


\begin{proof}
Since $\partial\Omega$ is analytic, the conclusion of Lemma $\ref{monotonicity formula}$ also holds when $z_0=(x_0,0)$ with $B_r(x_0)\subseteq\Omega_R=\{x\ |\ dist(x,\Omega)<R\}$ with $R<r_0$, where $r_0$ is the same positive constant as in Lemma $\ref{monotonicity formula}$. So from Lemma $\ref{monotonicity formula}$ and the definition of the frequency function, we have
\begin{eqnarray}
\ln\frac{H(z_0,r_2)}{H(z_0,r_1)}&=&\int_{r_1}^{r_2}\frac{H'(z_0,r)}{H(z_0,r)}dr=(n-1)\ln\frac{r_2}{r_1}+2\int_{r_1}^{r_2}\frac{N(z_0,r)}{r}dr\nonumber
\\&\leq&(n-1)\ln\frac{r_2}{r_1}+C\left(N(z_0,r_2)+C_0\right)\ln\frac{r_2}{r_1},
\end{eqnarray}
and
\begin{eqnarray}
\ln\frac{H(z_0,r_3)}{H(z_0,r_2)}&=&\int_{r_2}^{r_3}\frac{H'(z_0,r)}{H(z_0,r)}dr=(n-1)\ln\frac{r_3}{r_2}+2\int_{r_2}^{r_3}\frac{N(z_0,r)}{r}dr\nonumber
\\&\geq&(n-1)\ln\frac{r_3}{r_2}+C^{-1}(N(z_0,r_2)-C_0)\ln\frac{r_3}{r_2}.
\end{eqnarray}
Thus we obtain the three sphere inequality of $H(z_0,r)$:
\begin{equation}
H(z_0,r_2)\leq Q'(\alpha)H(z_0,r_1)^{\alpha}H(z_0,r_3)^{1-\alpha}.
\end{equation}
Here $\alpha=\frac{\ln(r_2/r_1)}{\ln(r_2/r_1)+C_1\ln(r_3/r_2)}$, $Q'(\alpha)=\left(\frac{r_2}{r_1}\right)^{\frac{C_2}{\alpha}}$, $C_1$ and $C_2$ are positive constants depending only on $n$. By the integration of $H(z_0,r)$, we have
\begin{equation}
\|\widetilde{u}\|^2_{L^2(B_{r_2}(z_0))}+\|\widetilde{v}\|^2_{L^2(B_{r_2}(z_0))}\leq Q'(\beta)\left(\|\widetilde{u}\|^2_{L^2(B_{r_1}(z_0))}+\|\widetilde{v}\|^2_{L^2(B_{r_1}(z_0))}\right)^{\alpha}\left(\|\widetilde{u}\|^2_{L^2(B_{r_3}(z_0))}+\|\widetilde{v}\|^2_{L^2(B_{r_3}(z_0))}\right)^{1-\alpha},
\end{equation}
where $Q(\alpha)=Q'(\alpha)\frac{(r_2/r_1)^\alpha}{(r_3/r_2)^{1-\alpha}}$. This is the first inequality of this Lemma. The second inequality comes from Lemma $\ref{interior estimate}$ and the first inequality by replacing $r_1$ with $r_1/2$ and $r_3$ with $(r_2+r_3)/2$.
\end{proof}

\begin{remark}
The following three sphere inequality of $u$ can also be obtained by Lemma $\ref{three spheres inequality}$ and the relationship between $u$ and $\widetilde{u}$.
\begin{equation}
\|u\|_{L^2(B_{r_2}(x_0))}\leq S(\theta)e^{C_3\sqrt{\mu}r_0}\|u\|^{\theta}_{L^2(B_{r_1}(x_0))}\|u\|^{1-\theta}_{B_{r_3}(x_0)},
\end{equation}
where 
$$S(\theta)=C(\mu+r_1^{-2})^{\theta}(\mu+(r_3-2r_2+r_1)^{-2})^{1-\theta}\frac{((4r_2-2r_1)/r_1)^{\theta}}{((r_3+2r_2-r_1)/(4r_2-2r_1))^{1-\theta}}\left(\frac{4r_2-2r_1}{r_1}\right)^{\frac{C_2}{\theta}},$$ and $$\theta=\frac{\ln((4r_2-2r_1)/r_1)}{\ln((4r_2-2r_1)/r_1)+C_1\ln((r_3+2r_2-r_1)/(4r_2-2r_1))}.$$ Here $C$, $C_1$ and $C_2$ are positive constants depending only on $n$.
\end{remark}

By the above three sphere inequality, we can prove the propagation of the smallness property of $u$ from some ball $B_{r_0}(x_0)$ to a subset $G\subset\subset\Omega$ as follows.

\begin{lemma}\label{propagation of smallness of ball}
Let $u$ solve $(\ref{basic equations})$, $G$ be a connected open set,  $G\subset\subset\Omega$, and $x_0\subseteq\Omega$.
Assume that
\begin{equation}
\|u\|_{L^{\infty}(B_{r}(x_0))}\leq \eta, \quad\|u\|_{L^{\infty}(\Omega)}\leq 1,
\end{equation}
where $r<dist(G,\partial\Omega)$. Then we have
\begin{equation}
\|u\|_{L^{\infty}(G)}\leq e^{C_1(\sqrt{\mu}r-\ln r)}\eta^{\delta},
\end{equation}
with $\delta=e^{\frac{-C_2diam(\Omega)}{r}}$. Here $C_1$ and $C_2$ are positive constants depending only on $n$.
\end{lemma}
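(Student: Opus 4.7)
The plan is to iterate the three-sphere inequality for $u$ (the Remark following Lemma $\ref{three spheres inequality}$) along a chain of balls that connects $x_0$ to each point of $G$, and then upgrade the resulting $L^2$ smallness back to $L^\infty$ smallness by interior elliptic regularity.

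First I would reduce to $L^2$: $\|u\|_{L^\infty(B_r(x_0))}\le\eta$ gives $\|u\|_{L^2(B_{r/2}(x_0))}\le Cr^{n/2}\eta$, and $\|u\|_{L^\infty(\Omega)}\le 1$ gives $\|u\|_{L^2(B_\rho(y))}\le C\rho^{n/2}$ whenever $B_\rho(y)\subset\Omega$. Next I would build the chain: for any $y\in G$, since $G$ is connected with $G\subset\subset\Omega$ and $r<\mathrm{dist}(G,\partial\Omega)$, there is a polygonal path from $x_0$ to $y$ staying at distance $\ge r$ from $\partial\Omega$ of total length $\le C\,\mathrm{diam}(\Omega)$, which I sample by points $x_0=y_0,y_1,\ldots,y_N=y$ with $|y_{i+1}-y_i|\le r/8$, $B_r(y_i)\subset\Omega$ for every $i$, and $N\le C\,\mathrm{diam}(\Omega)/r$.

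The core step is the iteration. Apply the Remark's three-sphere inequality at each $y_i$ with the fixed choice $r_1=r/4$, $r_2=r/2$, $r_3=r$; the corresponding exponent $\theta\in(0,1)$ depends only on $n$, and the multiplicative constant has the form $K=S(\theta)\,e^{C_3\sqrt{\mu}\,r}$. Because $|y_{i+1}-y_i|+r/4\le r/2$, $B_{r/4}(y_{i+1})\subset B_{r/2}(y_i)$, so setting $a_i:=\|u\|_{L^2(B_{r/2}(y_i))}$ and $M:=\|u\|_{L^2(\Omega)}\le C$ we obtain
\begin{equation*}
a_{i+1}\le K\,a_i^{\theta}\,M^{1-\theta},\qquad i=0,1,\ldots,N-1.
\end{equation*}
A standard telescoping then yields
\begin{equation*}
a_N\le K^{(1-\theta^N)/(1-\theta)}\,M^{1-\theta^N}\,a_0^{\theta^N}\le K^{1/(1-\theta)}\,C\,a_0^{\theta^N}.
\end{equation*}
Since $\theta<1$, $\theta^N\ge e^{-C_2\,\mathrm{diam}(\Omega)/r}=\delta$, which produces the required Hölder exponent. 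Starting from $a_0\le Cr^{n/2}\eta$ and converting back to $L^\infty$ on $B_{r/4}(y)$ via Lemma $\ref{interior estimate}$ and Remark $\ref{interior remark}$ (which costs a factor $(\mu+r^{-2})^{(n+2)/4}$), one then takes $\sup_{y\in G}$ to conclude.

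The main obstacle is the constant bookkeeping, where one has to check that none of the accumulating factors compound into $e^{C\sqrt{\mu}\,\mathrm{diam}(\Omega)}$. For the exponential piece, the $e^{C_3\sqrt{\mu}\,r}$ contributed at step $i$ is raised to the power $\theta^{N-1-i}$, so the $N$ contributions form a geometric series whose sum is bounded by $1/(1-\theta)$; this gives $K^{1/(1-\theta)}=e^{C_1'\sqrt{\mu}\,r}\cdot r^{-C_1''}$, which is exactly the stated $e^{C_1(\sqrt{\mu}\,r-\ln r)}$ prefactor after also absorbing the polynomial factor $(\mu+r^{-2})^{(n+2)/4}$ from the final $L^2\to L^\infty$ step (using the elementary bound $\log(\mu+r^{-2})\le C(\sqrt{\mu}\,r+|\ln r|)$ valid for $\mu$ large, split into the cases $r\ge\mu^{-1/2}$ and $r<\mu^{-1/2}$).
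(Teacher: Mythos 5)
Your argument is correct and follows the same overall strategy as the paper (chain of overlapping balls from $x_0$ to each point of $G$, iterated three-sphere inequality, conversion between $L^\infty$ and $L^2$ at the endpoints), but there is one genuine methodological difference worth pointing out. The paper runs the iteration on the quantity $\|\widetilde{u}\|^2_{L^2}+\|\widetilde{v}\|^2_{L^2}$ using the first inequality of Lemma \ref{three spheres inequality}, whose multiplicative constant $Q(\alpha)$ is $\mu$-independent when the radii are taken in a fixed ratio; this means nothing $\mu$-dependent accumulates along the chain, and the $\mu$-dependent conversion (from $\widetilde{u},\widetilde{v}$ back to $u$, and from $L^2$ to $L^\infty$) is paid only once at the end. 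You instead iterate the Remark's three-sphere inequality for $u$ alone, whose per-step constant $K=S(\theta)e^{C_3\sqrt{\mu}\,r}$ contains both a polynomial factor $(\mu+r^{-2})$ and an exponential factor in $\sqrt{\mu}$; the argument then survives only because you correctly observe that the exponent on $K$ sums geometrically to $1/(1-\theta)$, a dimensional constant, so the $\mu$-dependent cost does \emph{not} compound over the $N\sim \mathrm{diam}(\Omega)/r$ steps. This is the extra idea your route requires and you supply it, along with the elementary case-split bound $\log(\mu+r^{-2})\le C(\sqrt{\mu}\,r+|\ln r|)$ needed to fold the polynomial factor into the stated prefactor $e^{C_1(\sqrt{\mu}\,r-\ln r)}$. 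In short, the paper's choice of iteration variable makes the bookkeeping trivial, while yours avoids carrying $\widetilde{v}$ through the chain at the price of the geometric-series observation; both are valid. One small caution: you should cite the prefactor of the Remark with the outer radius of the chain in the exponent, $e^{C_3\sqrt{\mu}\,r_3}$ with $r_3=r$ (as you in fact do), rather than the fixed constant $r_0$ that literally appears in the Remark, since only the former is small enough to be absorbed into $e^{C_1\sqrt{\mu}\,r}$ when $r$ is small.
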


\begin{proof}

For any $h>0$, let $G^{h}$ be the $h$ neighborhood of $G$, i.e., $G^{h}=\left\{x\in \Omega\ \big|\ dist(x,G)<h\right\}$. We also fix $r_3=\frac{r}{2}$, $r_2=\frac{r_3}{2}$ and $r_1=\frac{r_2}{3}$. Now we consider the set $G^{r_1}$. For any $y_0\in G^{r_1}$, there exists a continuous path $\gamma$ from $[0,1]$ to $\Omega$ such that $\gamma(0)=x_0$ and $\gamma(1)=y_0$. Let $0=t_0<t_1<t_2<\cdots<t_K=1$ such that $x_k=\gamma(t_k)$, and $t_{k+1}=\max\{t\ |\ |\gamma(t)-x_k|=2r_1\}$ if $|x_k-y_0|>2r_1$, otherwise we stop the process and set $K=k+1$ and $t_K=1$. Then $\{B_{r_1}(x_k)\}$ are mutually  disjoint balls, $|x_{k+1}-x_k|=2r_1$ for any $k=0,1,2,\cdots,K-1$, and $B_{r_1}(x_{k+1})\subseteq B_{r_2}(x_k)$ for $k=0,1,2,\cdots,K-1$, since $r_1=\frac{r_2}{3}$. From the first inequality of Lemma $\ref{three spheres inequality}$, we have for any $k=0,1,2,\cdots,K-1$,
\begin{eqnarray}
&&\|\widetilde{u}\|^2_{L^{2}(B_{r_1}(x_{k+1}))}+\|\widetilde{v}\|^2_{L^2(B_{r_1}(x_{k+1}))}\leq\|\widetilde{u}\|^2_{L^{2}(B_{r_2}(x_k))}+\|\widetilde{v}\|^2_{L^2(B_{r_2}(x_k))}\nonumber
\\&&\quad\leq  Q\left(\|\widetilde{u}\|^2_{L^{2}(B_{r_1}(x_k))}+\|\widetilde{v}\|^2_{L^2(B_{r_1}(x_k))}\right)^{\alpha}\left(\|\widetilde{u}\|^2_{L^{2}(B_{r_3}(x_k))}+\|\widetilde{v}\|^2_{L^2(B_{r_3}(x_k))}\right)^{1-\alpha}\nonumber
\\&&\quad\leq Q\left(\|\widetilde{u}\|^2_{L^{2}(B_{r_1}(x_k))}+\|\widetilde{v}\|^2_{L^2(B_{r_1}(x_k))}\right)^{\alpha}\left(\|\widetilde{u}\|^2_{L^{2}(\Omega_{r_3}\times(-r_3,r_3))}+\|\widetilde{v}\|^2_{L^2(\Omega_{r_3}\times(-r_3,r_3))}\right)^{1-\alpha}.
\end{eqnarray}
So if we set $$m_l=\frac{\|\widetilde{u}\|^2_{L^{2}(B_{r_1}(x_{l}))}+\|\widetilde{v}\|^2_{L^2(B_{r_1}(x_l))}}{\|\widetilde{u}\|^2_{L^{2}(\Omega_{r_3}\times(-r_3,r_3))}+\|\widetilde{v}\|^2_{L^2(\Omega_{r_3}\times(-r_3,r_3))}},$$ the above inequality becomes
\begin{equation*}
m_{l+1}\leq Qm_l^{\alpha},\quad l=0,1,\cdots,K-1.
\end{equation*}
Thus 
\begin{equation*}
m_K\leq \widetilde{C}m_0^{\delta},
\end{equation*}
where $\widetilde{C}=Q^{c_1}$ with $c_1=\frac{1}{1-\alpha}\geq1+\alpha+\alpha^2+\cdots+\alpha^{K-1}$, and $\delta=\alpha^K$. Hence from Lemma $\ref{interior estimate}$ and the $L^{\infty}$ estimate of $\widetilde{u}$, we obtain that for $z_K=(y_0,0)$,
\begin{equation}
\|\widetilde{u}\|_{L^{\infty}(B_{r_1}(z_K))}\leq e^{C(\ln\mu-\ln r)}\widetilde{C}\left(\|\widetilde{u}\|_{L^{\infty}(B_{2r_1}(z_0))}\right)^{\delta}\left(\|\widetilde{u}\|_{L^{2}(\Omega\times(-r,r))}\right)^{1-\delta}.
\end{equation}
Since $\{B_{r_1}(x_k)\}$ are pairwise disjoint balls and $r_1=\frac{r}{12}$, we have $K\leq \frac{C_1diam(\Omega)}{r}$. Hence $\widetilde{C}= Q^{\frac{1}{1-\alpha}}(\alpha)$ and $\delta=\alpha^{\frac{C_1diam(\Omega)}{r}}$. So from the relationship between $u$ and $\widetilde{u}$, there holds
\begin{eqnarray}
\|u\|_{L^{\infty}(G)}\leq e^{C(\sqrt{\mu}r+\ln\mu-\ln r)}Q^{\frac{1}{1-\alpha}}\|u\|_{L^{\infty}(B_{r}(x_0))}^{\delta}\|u\|_{L^{\infty}(\Omega)}^{1-\delta}
\end{eqnarray}
Here $C$ is a positive constant depending only on $n$ and $\Omega$. This completes the proof.
\end{proof}

From this Lemma, we prove Theorem $\ref{propagation of smallness}$ as follows.

\textbf{Proof of Theorem $\ref{propagation of smallness}$:}

Since $E$ is a convex subset of $\Omega$, there exists a ball $B_r(x_0)$ contained in $E$ with $r<\min\{C\frac{\mathcal{H}^n(E)}{diam(\Omega)^{n-1}},dist(G,\partial\Omega)\}$. Thus the conclusion is obtained by Lemma $\ref{propagation of smallness of ball}$.

\qed

\begin{remark}
By the same arguments as in $\cite{Logunov and Milinicova}$, a similar result of Theorem $\ref{propagation of smallness}$ also holds when we replace the condition on ``$E$ is an open subset of $\Omega$ with $\mathcal{H}^n(E)\geq\epsilon$'' by that ``$E$ is any subset of $\Omega$ with $\mathcal{H}^{n-1+s}(E)>\epsilon$'' for any $s\in(0,1]$. In this case, the positive constants $C$ and $\delta$ in $(\ref{propagation of smallness of G and E})$  depend on $n$, $diam(\Omega)$, $dist(G,\partial\Omega)$, $\epsilon$ and $s$.
\end{remark}


\section*{Acknowledgement}

This work is supported by the National Natural Science Foundation of China (Grant No. 12071219 and No. 11971229).



\begin{thebibliography}{plain}
\bibitem{2009} G. Alessandrini, L. Rondi, E. Rosset, S. Vessella, The stability for the Cauchy problem for elliptic equations, Inverse Problem, 25(12), 123004, 2009.

\bibitem{Almgren} F. J. Almgren, Dirichlet's problem for multiple-valued functions and the regularity of mass minimizing integral currents, in ``Minimal submanifolds and geodesics'' (M.Obara)1-6, North Holland, Amsterdam, 1979.

\bibitem{Bruning} J. Br\"uning, \"Uber Knoten von eigenfunktionen des Laplace Beltrami operator, Math. Z., 158, 15-21, 1978.

\bibitem{LogunovMil} S. M. Berge, E. Malinnikova, On the three ball theorem for solutions of the Helmholtz equation, Complex Anal. and its Syner., 7(2), 14, 2021.

\bibitem{Colding} T. H. Colding, W. P. Minicozzi, Lower bounds for nodal Sets of eigenfunctions, Comm. Math. Phys., 306(3), 777-784, 2011.

\bibitem{BJFA} B. Davey, Quantitative unique continuation for Schr\"odinger operator, J. Func. Anal., 279(4), 108566, 2020.

\bibitem{Davey} B. Davey, J. Y. Zhu, Quantitative uniqueness of solutions to second order elliptic equations with singular lower order terms, Comm. Partial Diff. Equ., 44(11), 1217-1251, 2019.

\bibitem{Donnelly and Fefferman2} H. Donnelly, C. Fefferman, Nodal sets for eigenfunctions of the Laplacian on surfaces, J. Amer. Math. Soc., 3(2), 333-353, 1990.

\bibitem{Donnelly and Fefferman1} H. Donnelly, C. Fefferman, Nodal sets of eigenfunctions on Riemannian manifolds, Invent. Math., 93, 161-183, 1988.

\bibitem{G and L1} N. Garofalo, F. H. Lin, Monotonicity properties of variational integrals, $A_p$ weights, and unique continuation, Indiana Univ. Math. J., 35(2), 245-268, 1986.

\bibitem{G and L2} N. Garofalo, F. H. Lin, Unique continuation for elliptic operators: a geometric variational approach, Comm. Pure Appl. Math., 40(3), 347-366, 1987.

\bibitem{Gilbarg and Trudinger} D. Gilbarg, N. S. Trudinger, Elliptic partial differential equation of second order, Springer, Berlin, 1983.

\bibitem{Han} Q. Han,  Sch\"auder estimates for elliptic operators with applications to nodal sets, J. Geom. Anal., 10(3), 455-480, 2000.

\bibitem{Han Hardt and Lin} Q. Han, R. Hardt, F. H. Lin, Singular sets of higher order elliptic equations, Comm. Partial Diff. Equ., 28(11-12), 2045-2063, 2003.

{\bibitem{Han and Lin book} Q. Han, F. H. Lin, Nodal sets of solutions of elliptic differential equations, http://nd.edu/qhan/nodal.pdf.}

\bibitem{Hardt and Simon} R. Hardt, L. Simon, Nodal sets for solutions of elliptic equations, J.Differential Geom., 30(2), 505-522, 1989.

\bibitem{Kenig} C. Kenig, J. Y. Zhu, Propagation of smallness in elliptic periodic homogenization, SIAM J. Math. Anal., 53(1), 111-132, 2021.

\bibitem{Kukavica} I. Kukavica, Nodal volumes for eigenfunctions of analytic regular elliptic problems, J. d'Anal. Math., 67(1),  269-280, 1995.

\bibitem{Kukavica2} I. Kukavica, Quantitative uniqueness for second order elliptic operators, Duke Math. J., 91(2), 225-240, 1998.

\bibitem{Lin} F. H. Lin, Nodal sets of solutions of elliptic and parabolic equations, Comm. Pure Appl. Math., 44(3), 287-308, 1991.

\bibitem{Lin and Yang} F. H. Lin, X. P. Yang, Geometric measure theory- an introduction, Adv. Math., 1, Science Press/International Press, Beijing/Boston, 2002.

\bibitem{Lin and Zhu} F. H. Lin, J. Y. Zhu, Upper bounds of the nodal sets for eigenfunctions of eigenvalue problems, Math. Ann., 38 (3-4),1957-1984, 2022.

\bibitem{Lions and magenes} J. L. Lions, E. Magenes, Non-homogeneous boundary value problems and applications \uppercase\expandafter{\romannumeral3}, Springer-Verlag New York Heidelberg Berlin, 1973.

\bibitem{Logunov1} A. Logunov, Nodal sets of Laplace eigenfunctions: polynomial upper estimates of the Hausdorff measure, Ann. of Math., 187(2), 221-239, 2018.

\bibitem{Logunov2} A. Logunov, Nodal sets of Laplace eigenfunctions: proof of Nadirashvili's conjecture and the lower bound in Yau's conjecture, Ann. of Math., 187(2), 241-262, 2018.

\bibitem{Logunov3} A. Logunov,  E.Mialinnikova, Nodal sets of Laplace eigenfunctions: estimates of the Hausdorff measure in dimensions two and three, Oper. Theory Adv. Appl., 261, Birkh-auser/Springer, Cham, 333–344, 2018.

\bibitem{Logunov and Milinicova} A. Logunov, E. Mialinnikova, Quantitative propagation of smallness for solutions of elliptic equations, Proceedings of the International Congress of Mathematicians (ICM 2018), 2391-2411, 2019.

\bibitem{Payne} L. E. Payne,  Isoperimetric inequalities and their applications, SIAM Review, 9 (3), 453-488, 1967.

\bibitem{Sogge and Zelditch} C. D. Sogge, S. Zelditch, Lower bounds on the Hausdorff measure of nodal sets, Math. Res. Lett., 18(1), 25-37, 2011.

\bibitem{Tian and Yang} L. Tian, X. P. Yang, Measure estimates of nodal sets of bi-harmonic functions, J. Diff. Equ., 256(2), 558-576, 2014.

\bibitem{Tian and Yang2} L. Tian, X. P. Yang, Measure upper bounds for nodal sets of eigenfunctions of the bi-harmonic operator, J. London Math. Soc., 105 (2), 1-38, 2022. 

\bibitem{Yau} S. T. Yau, Open problems in geometry, Proc. Sympos. Pure Math., 54(1), 1-28, 1993.

\bibitem{J.Y.Zhu3} J. Y. Zhu, Doubling inequality and nodal sets for solutions of bi-Laplace equations,  Arch. Rati. Mech. Anal., 232(2), 1543-1595, 2019.

\bibitem{J.Y.Zhu2} J. Y. Zhu, Quantitative unique continuation of solutions to higher order elliptic equations with singular coefficients. Cal. Var. Partial Diff. Equ., 57(2), 58, 2018.

\bibitem{ZhuAJM} J. Y. Zhu, Quantitative uniqueness of elliptic equations, Am. J. Math., 138(3), 733-762, 2016.

\end{thebibliography}
\end{document}